\newtheorem{thm}{Theorem}[section]
\newtheorem{lem}[thm]{Lemma}
\newtheorem{prop}[thm]{Proposition}
\newtheorem{cor}[thm]{Corollary}
\theoremstyle{definition}
\newtheorem{dfn}[thm]{Definition}
\newtheorem{ex}[thm]{Example}
\theoremstyle{remark}
\newtheorem{remark}[thm]{Remark}
\newtheorem{remarks}[thm]{Remarks}
\newtheorem{notation}[thm]{Notation}
\newcommand{\CA}{{\mathcal{A}}}
\newcommand{\CD}{{\mathcal{D}}}
\newcommand{\CE}{{\mathcal{E}}}
\newcommand{\BE}{{\mathbb{E}}}
\newcommand{\CI}{{\mathcal{I}}}
\newcommand{\CJ}{{\mathcal{J}}}
\newcommand{\CK}{{\mathcal{K}}}
\newcommand{\CL}{{\mathcal{L}}}
\newcommand{\CB}{{\mathcal{B}}}
\newcommand{\CO}{{\mathcal{O}}}
\newcommand{\CR}{{\mathcal{R}}}
\newcommand{\af}{\alpha}
\newcommand{\bt}{\beta}
\newcommand{\gm}{\gamma}
\newcommand{\dt}{\delta}
\newcommand{\ep}{\epsilon}
\newcommand{\ld}{\lambda}
\newcommand{\sm}{\sigma}
\newcommand{\om}{\omega}
\newcommand{\Om}{\Omega}
\let\oldbibitem\bibitem
\renewcommand{\bibitem}[2][]{\oldbibitem{#2}}%
\begin{document}


\title{AF-embeddable labeled graph $C^*$-algebras}

\author[J. A. Jeong]{Ja A Jeong$^{\dagger}$}
\thanks{Research partially supported by NRF-2015R1C1A2A01052516 and 
2018R1D1A1B07041172$^{\dagger}$}
\address{
Department of Mathematical Sciences and Research Institute of Mathematics\\
Seoul National University\\
Seoul, 08826\\
Korea} \email{jajeong\-@\-snu.\-ac.\-kr }

\author[G. H. Park]{Gi Hyun Park$^{\ddagger}$}
\thanks{Research partially supported by Hanshin University$^{\ddagger}$}
\address{
Department of Financial Mathematics\\
Hanshin University\\
Osan, 18101\\
Korea} \email{ghpark\-@\-hs.\-ac.\-kr }

\keywords{ AF-embeddability, labeled graph $C^*$-algebra, quasidiagonal $C^*$-algebra}

\subjclass[2010]{37A55, 46L05, 46L55}

\begin{abstract} 
Finiteness conditions for $C^*$-algebras like AF-embeddability, 
quasidiagonality, stable finiteness have been studied by many authors 
and shown to be equivalent 
for certain classes of $C^*$-algebras. 
For example, Schfhauser proves that these conditions are all equivalent for 
$C^*$-algebras of compact topological graphs, and similar results 
were established by Clark, an Huef, and Sims for 
$k$-graph algebras.  
If $C^*(E,\mathcal L)$ is a labeled graph $C^*$-algebra over finite alphabet, 
it can be viewed as   a $C^*$-algebra of a compact topological graph. 
For these labeled graph $C^*$-algebras, we  
provide conditions on labeled paths and show that they are 
equivalent to AF-embeddability of $C^*(E,\mathcal L)$. 
\end{abstract}

\maketitle

\section{Introduction}

\noindent
In this paper we are concerned with 
AF-embeddability, quasidiagonality, and stable finiteness 
of separable $C^*$-algebras associated to labeled graphs. 
If a $C^*$-algebra $A$ can be embedded into an AF algebra, 
one can show that it has a faithful representation $\pi$ of $A$ such that 
$\pi(A)$ consists of quasidiagonal operators, namely 
there is an approximately central sequence of finite rank projections 
which converges strongly to the unit. 
A $C^*$-algebra with this property is called quasidiagonal. 
Quasidiagonality is known to have connection with nuclearity of $C^*$-algebras 
\cite{Ha:1987}, and 
it played an important role in Elliott's programme to 
classify simple nuclear $C^*$-algebras 
(for example, see \cite{TWW:2017} among many others). 
For quasidiagonal $C^*$-algebras, we refer the reader to 
Brown's expository paper \cite{Br:2004}, and there we particularly notice 
that quasidiagonal $C^*$-algebras are stably finite.

For certain classes of $C^*$-algebras, 
AF-embeddability and stable finiteness are known to be 
equivalent, hence they are 
equivalent to quasidiagonality as well. 
Among those $C^*$-algebras are 
the crossed products considered by Pimsner and 
Brown, respectively: 

\vskip 1pc

\begin{thm}\label{Pimsner thm} {\rm (\cite[Theorem 9]{Pim:1983})} 
\label{Pimsner thm} 
Let $X$ be a compact metrizable space and $\sm$ be a homeomorphism of $X$. 
Then the following are equivalent: 
\begin{enumerate} 
\item[$(a)$] $C(X)\times_\sm \mathbb Z$ is AF embeddable,
\item[$(b)$] $C(X)\times_\sm \mathbb Z$ is quasidiagonal, 
\item[$(c)$] $C(X)\times_\sm \mathbb Z$ is stably finite,
\item[$(d)$] $C(X)\times_\sm \mathbb Z$ is finite,
\item[$(e)$] every point $x\in X$ is {\it pseudoperiodic} for $\sm$ 
in the sense that for each $x_1\in X$ and $\varepsilon>0$, 
there exist $x_2, \dots, x_n$ in $X$ such that 
$\rho(\sm(x_i),x_{i+1})<\varepsilon$ for all $1\leq i\leq n$, where 
the subscripts are taken modulo $n$ and $\rho$ is a metric compatible with 
the topology of $X$.
\end{enumerate}
\end{thm}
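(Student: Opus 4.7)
The plan is to prove the cycle $(a) \Rightarrow (b) \Rightarrow (c) \Rightarrow (d) \Rightarrow (e) \Rightarrow (a)$. The first three implications are general $C^*$-algebraic facts: any AF algebra is quasidiagonal and quasidiagonality passes to $C^*$-subalgebras, giving $(a) \Rightarrow (b)$; quasidiagonal $C^*$-algebras are stably finite (by Rosenberg's standard argument ruling out proper isometries in matrix amplifications), giving $(b) \Rightarrow (c)$; and $(c) \Rightarrow (d)$ is immediate.

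For $(d) \Rightarrow (e)$ I would argue the contrapositive. Suppose $x_0 \in X$ fails pseudoperiodicity and fix $\varepsilon > 0$ for which no $\varepsilon$-pseudo-orbit from $x_0$ returns $\varepsilon$-close to $x_0$. Let $Y \subseteq X$ be the set of points reachable from $x_0$ by $\varepsilon$-pseudo-orbits; then, up to an $\varepsilon$-thickening, $\overline{Y}$ is forward invariant under $\sigma$ while $x_0$ stays bounded away from $\overline{Y}$. This produces an open set $U \subseteq X$ with $\sigma(\overline{U}) \subsetneq U$. Pick $f \in C(X)$ supported on $U$ with $f \equiv 1$ on $\sigma(\overline{U})$; writing $u$ for the canonical unitary in $C(X) \times_\sigma \mathbb{Z}$, one has $u f u^* = f \circ \sigma^{-1}$ strictly below $f$ on $U \setminus \sigma(\overline{U})$. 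A standard functional-calculus refinement (spectral projections of $f$ in a suitable representation) upgrades this to a Murray--von Neumann equivalence between genuine projections $p > q$, producing an infinite projection; since any infinite projection in a unital $C^*$-algebra forces the unit to be infinite, $C(X) \times_\sigma \mathbb{Z}$ is not finite.

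The substantive step is $(e) \Rightarrow (a)$. Assuming every point of $X$ is pseudoperiodic, I would build an AF embedding in two stages. First, for each $\varepsilon > 0$, combine pseudoperiodicity with compactness of $X$ to extract finitely many closed $\varepsilon$-pseudo-orbits whose points form an $\varepsilon$-net of $X$; turning each such chain into a genuine periodic orbit of a discrete model yields a periodic approximant whose crossed product has the form $\bigoplus_j M_{n_j}(C(\mathbb{T}))$, and each summand embeds into a UHF algebra via the standard Bratteli diagram of $C(\mathbb{T})$. Second, organize these approximants, for a sequence $\varepsilon_k \to 0$, into a coherent inductive system of AF algebras $B_1 \hookrightarrow B_2 \hookrightarrow \cdots$ with AF limit $\mathcal{A}$, together with asymptotic $*$-homomorphisms $C(X) \times_\sigma \mathbb{Z} \to B_k$ that together induce a genuine injective $*$-homomorphism into $\mathcal{A}$.

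The main obstacle is this final assembly step. Because the discretized dynamics $\sigma_{\varepsilon_k}$ is only $\varepsilon_k$-close to $\sigma$, the approximating maps are not honest $*$-homomorphisms on the crossed product; arranging a compatible system of such maps so that the limit map is both well-defined and injective -- so that any nonzero finite sum $\sum_{|n| \leq N} f_n u^n$ remains detected by the finite approximants in the limit -- is the technical heart of Pimsner's argument. Here the strength of pseudoperiodicity (every point approximately returns) is essential: it guarantees that the periodic approximants capture enough of the original dynamics for matrix coefficients not to be lost to cancellation as $\varepsilon_k \to 0$.
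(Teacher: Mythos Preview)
The paper does not prove this theorem. It is stated with an explicit citation to \cite[Theorem~9]{Pim:1983} and used throughout as a black box; there is no argument in the paper to compare your proposal against.

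Regarding your sketch on its own merits: the chain $(a)\Rightarrow(b)\Rightarrow(c)\Rightarrow(d)$ is standard, and your identification of $(e)\Rightarrow(a)$ as the substantive direction, to be handled by periodic approximants assembled into an inductive AF limit, correctly captures the shape of Pimsner's original argument. Your $(d)\Rightarrow(e)$ step, however, has a gap. You want to upgrade a strict inequality $ufu^* \lneq f$ of positive elements to one between genuine projections via ``spectral projections of $f$ in a suitable representation'', but spectral projections of $f\in C(X)$ generally lie only in the bidual, not in $C(X)\times_\sigma\mathbb Z$ itself, and when $X$ is connected $C(X)$ contains no nontrivial projections at all. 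Producing an honest infinite projection in the crossed product (or in matrices over it) from a wandering open set requires more care---one typically constructs an explicit proper isometry directly, or argues via traces and $K_0$, rather than by spectral truncation.
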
 

\vskip 1pc 
   
\begin{thm}\label{Brown thm} {\rm (\cite[Theorem 0.2]{Br:1998})} 
If $\af$ is an automorphism on an AF algebra $A$, then 
the following are equivalent: 
\begin{enumerate} 
\item[{\rm (a)}] $A\times_\af \mathbb Z$ is AF embeddable,
\item[{\rm (b)}] $A\times_\af \mathbb Z$ is quasidiagonal, 
\item[{\rm (c)}] $A\times_\af \mathbb Z$ is stably finite,
\item[{\rm (d)}] $H_\af\cap K_0^+(A)=\{0\}$, 
where $H_\af:=\{ \af_*(x)-x\mid x\in K_0(A)\}$.
\end{enumerate}
\end{thm}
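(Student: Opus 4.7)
The plan is to prove the cycle (a) $\Rightarrow$ (b) $\Rightarrow$ (c) $\Rightarrow$ (d) $\Rightarrow$ (a). The first two implications are general $C^*$-algebraic facts that do not use the AF structure of $A$: an AF algebra is quasidiagonal and quasidiagonality passes to $C^*$-subalgebras, giving (a) $\Rightarrow$ (b); and any quasidiagonal $C^*$-algebra is stably finite, giving (b) $\Rightarrow$ (c). Both of these are recorded in Brown's survey \cite{Br:2004}.

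For (c) $\Rightarrow$ (d), my plan is to invoke the Pimsner--Voiculescu six-term exact sequence
\[
\cdots \to K_0(A) \xrightarrow{1-\af_*} K_0(A) \xrightarrow{\iota_*} K_0(A\times_\af\Z) \to K_1(A) \to \cdots,
\]
where $\iota\colon A \hookrightarrow A\times_\af\Z$ is the canonical inclusion; this identifies $\ker(\iota_*)$ with $H_\af$. If there exists a nonzero $x \in H_\af \cap K_0^+(A)$, write $x = [p]$ for a nonzero projection $p$ in some $M_k(A)$. Then $[\iota(p)] = 0$ in $K_0(A\times_\af\Z)$, so $\iota(p) \oplus 1_n$ is Murray--von Neumann equivalent to $1_n$ in $M_\infty(A\times_\af\Z)$ for some $n$, exhibiting $1_n$ as equivalent to a proper subprojection of itself and contradicting stable finiteness of the crossed product.

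The heart of the theorem is (d) $\Rightarrow$ (a), and my plan is to leverage the AF structure of $A$. Write $A = \overline{\bigcup_n A_n}$ with each $A_n$ finite-dimensional. The strategy is to produce, for each $n$ and each $\ep > 0$, a finite-dimensional $C^*$-algebra $B_n$ containing (an isomorphic copy of) $A_n$ and a unitary $u_n \in B_n$ with $\|u_n a u_n^* - \af(a)\| < \ep$ for each $a$ in a prescribed finite subset of $A_n$. These data can be assembled into an inductive system whose limit is an AF algebra $B$, and by sending $a \in A_n$ to its image in $B_n$ and the canonical unitary generator of $\Z$ to a limit of the $u_n$, one obtains a $*$-homomorphism $A\times_\af\Z \to B$. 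To conclude AF-embeddability one must verify that this map is injective, which I would address by choosing the finite subsets densely and the approximate intertwinings close enough that no nonzero element of the crossed product collapses in the limit.

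The main obstacle is producing $(B_n, u_n)$ with correctly matched $K_0$-data at each finite stage, and hypothesis (d) is designed precisely for this. A Hahn--Banach separation of the convex cone $K_0^+(A)\setminus\{0\}$ from the subgroup $H_\af$ in $K_0(A)$ should yield an $\af_*$-invariant positive state on $K_0(A)$, equivalently, an $\af$-invariant faithful tracial state $\tau$ on $A$; the identity $\tau = \tau\circ\af$ then matches the ranks of corresponding minimal projections of $A_n$ and $\af(A_n)$ in a common finite-dimensional target, which is what is required to realise the approximate conjugation $u_n a u_n^* \approx \af(a)$. The delicate technical point will be to arrange the $B_n$ compatibly as $n$ grows so that the inductive limit genuinely absorbs the crossed product structure, rather than merely a nontrivial quotient of $A\times_\af\Z$; this is where I expect the hardest bookkeeping to lie.
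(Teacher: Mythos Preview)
The paper does not prove this theorem; it is quoted verbatim as \cite[Theorem 0.2]{Br:1998} and used as a black box (notably in the proof of Proposition~\ref{prop-crossed product}). There is therefore no proof in the paper to compare your proposal against.

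That said, a brief remark on your sketch: the implications (a) $\Rightarrow$ (b) $\Rightarrow$ (c) $\Rightarrow$ (d) are essentially as you describe, and your Pimsner--Voiculescu argument for (c) $\Rightarrow$ (d) is correct. For (d) $\Rightarrow$ (a), however, your plan is too optimistic. The Hahn--Banach step does not obviously go through: $K_0(A)$ need not carry a topology in which $K_0^+(A)\setminus\{0\}$ is separated from the subgroup $H_\af$ by a single positive functional, and even when an $\af$-invariant trace exists, matching ranks at each finite stage is not by itself enough to build a coherent AF embedding of the full crossed product. Brown's actual argument in \cite{Br:1998} is substantially more delicate: it proceeds by reducing to the simple case, invoking classification-theoretic machinery (in particular results of Elliott and Voiculescu on inductive limits and approximate intertwinings), and constructing the embedding via a careful $K$-theoretic lifting rather than a direct finite-dimensional approximation of $\af$ by inner automorphisms. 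Your outline captures the spirit of why (d) should suffice, but the ``hardest bookkeeping'' you anticipate is in fact the entire substance of Brown's paper.
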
 

\vskip 1pc 
  
Besides, Schafhauser \cite{Sch:2015-1} 
proved that these properties are all equivalent 
for graph $C^*$-algebras $C^*(E)$ and that 
$C^*(E)$ is finite exactly when $E$ has no loops with an exit.  
In \cite{CHS:2016},  Clark, an Huef, and Sims considered 
these properties with 
$k$-graph $C^*$-algebras and proved that quasidiagonality and stable finiteness 
are equivalent.  
They also provided two more conditions equivalent to quasidiagonality, 
and 
obtained that they are all equivalent to AF-embeddability for 2-graph algebras. 

Motivated by these works, 
in this paper we study
AF-embeddability, quasidiagonality and stable finiteness of 
labeled graph $C^*$-algebras with focus on finding 
equivalent conditions in terms of labeled graph or its path spaces.
We first show the following:

\vskip 1pc 

\begin{prop}\label{intro prop-crossed product} 
{\rm(Proposition \ref{prop-crossed product})}
Let $(E,\CL)$ be a weakly left-resolving, set-finite labeled graph. 
Then the following are equivalent: 
\begin{enumerate} 
\item[$(a)$] $C^*(E,\CL)$ is AF embeddable,
\item[$(b)$] $C^*(E,\CL)$ is quasidiagonal, 
\item[$(c)$] $C^*(E,\CL)$ is stably finite.
\end{enumerate}
\end{prop}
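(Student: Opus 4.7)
The plan is to dispatch the implications $(a)\Rightarrow(b)\Rightarrow(c)$ by general $C^*$-algebra theory, and to obtain the harder implication $(c)\Rightarrow(a)$ by realizing $C^*(E,\CL)$ as the $C^*$-algebra of a compact topological graph and then invoking Schafhauser's theorem.

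The first two implications require no labeled-graph input. Every AF algebra is quasidiagonal and quasidiagonality passes to $C^*$-subalgebras, giving $(a)\Rightarrow(b)$; the standard fact that quasidiagonal $C^*$-algebras are stably finite (see the survey \cite{Br:2004}) gives $(b)\Rightarrow(c)$. In both cases I would simply quote the corresponding results and make no appeal to the structure of $(E,\CL)$.

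For $(c)\Rightarrow(a)$, I would follow the route indicated in the abstract and identify $C^*(E,\CL)$ with the $C^*$-algebra $C^*(E_\CL)$ of an auxiliary compact topological graph $E_\CL$ built from $(E,\CL)$. In this construction the vertex space of $E_\CL$ is the (compact, totally disconnected) space of generalized vertices of $(E,\CL)$; the edge space and the source/range maps are induced by the labeled edges of $E$ together with the shift on label sequences. The weakly left-resolving and set-finite hypotheses, together with the finiteness of the alphabet mentioned in the abstract, guarantee that $E_\CL$ is a well-defined compact topological graph in the sense of Katsura and that its Cuntz--Krieger-type relations match those defining $C^*(E,\CL)$. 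Once this identification is in place, Schafhauser's theorem \cite{Sch:2015-1} on compact topological graph $C^*$-algebras delivers AF-embeddability of $C^*(E_\CL)\cong C^*(E,\CL)$ directly from its stable finiteness.

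The main obstacle is the identification $C^*(E,\CL)\cong C^*(E_\CL)$. Concretely, one must describe $E_\CL$ explicitly in terms of the labeled path space and the lattice of generalized vertices, verify Katsura's topological-graph axioms for the resulting source and range maps, and construct mutually inverse $*$-homomorphisms between $C^*(E,\CL)$ and $C^*(E_\CL)$ via the universal properties of the two algebras. This step is largely a matter of bookkeeping, but it is where the labeled-graph hypotheses (weakly left-resolving, set-finite) genuinely enter the argument; everything afterwards is a clean application of previously established theorems.
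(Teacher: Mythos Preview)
Your outline has a genuine gap: Proposition~\ref{prop-crossed product} is stated for an arbitrary weakly left-resolving, set-finite labeled graph, with \emph{no} assumption that the alphabet $\CA$ is finite. The topological-graph realization you invoke (from \cite{COP:2017}) and Schafhauser's theorem \cite{Sch:2015} both require the topological graph $\BE$ to be \emph{compact}, and in the labeled-graph setting this compactness comes precisely from finiteness of $\CA$ (so that $\BE^0=\widehat{\CE}=Z(E^0)$ and $\BE^1=\sqcup_{a\in\CA}\widehat{\CI}_a$ are compact). You implicitly import this hypothesis from the abstract, but in the paper the finite-alphabet assumption is imposed only in Section~4; Proposition~\ref{prop-crossed product} sits in Section~3 and is meant to hold in full generality. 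So as written your argument proves strictly less than the proposition claims.

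The paper's proof avoids this restriction by a different route: it uses the Bates--Pask--Willis skew-product construction \cite{BPW:2012} to show that $C^*(E,\CL,\CE)$ is stably isomorphic to a crossed product $A\times_\tau\mathbb{Z}$ with $A$ AF (indeed, $A$ is stably isomorphic to the gauge-fixed-point algebra $C^*(E,\CL,\CE)^\gm$), and then applies Brown's Theorem~\ref{Brown thm}. This works for any countable alphabet. Even restricted to finite $\CA$, your approach is genuinely different from the paper's --- Schafhauser via compact topological graphs versus Brown via AF-by-$\mathbb{Z}$ crossed products --- and would be a legitimate alternative there; but to match the stated proposition you must either switch to the crossed-product argument or explain why your topological-graph identification yields a compact $\BE$ without assuming $|\CA|<\infty$, which it does not.
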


\vskip 1pc 

\noindent 
Actually this comes easily from the structural result 
of labeled graph $C^*$-algebras $C^*(E,\CL)$ by Bates, Pask, and Willis 
\cite{BPW:2012}
that  $C^*(E,\CL)$ is stably isomorphic to the crossed product 
of an AF algebra by the integers because once this is observed, then 
we can apply Brown's  result Theorem \ref{Brown thm}. 

On the other hand, 
if we restrict ourselves to labeled graphs $(E,\CL)$ 
where  the labeling map 
$\CL:E^1\to \CA$ is onto a finite alphabet $\CA$, 
then we can say more, 
especially on path space conditions 
as stated in $(e)$ of the following theorem:

\vskip 1pc

\begin{thm} {\rm (Theorem \ref{thm main})} \label{intro thm 1} 
Let $E$ have no sinks or sources 
and $(E,\CL)$ be a labeled graph over finite alphabet $\CA$. 
Then the following are equivalent: 
\begin{enumerate} 
\item[$(a)$] $C^*(E,\CL)$  is AF-embeddable,
\item[$(b)$] $C^*(E,\CL)$  is quasidiagonal,
\item[$(c)$] $C^*(E,\CL)$  is stably finite,
\item[$(d)$] $C^*(E,\CL)$  is finite,
\item[$(e)$] $(E,\CL)$ is pseudo-periodic, and for $\af,\bt\in \CL(E^k)$, $k\geq 1$,   
$$r(\af)\cap r(\bt)\neq \emptyset \iff \af=\bt.$$ 
\end{enumerate}
\end{thm}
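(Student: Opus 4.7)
Since AF-embeddability implies quasidiagonality, quasidiagonality implies stable finiteness, and stable finiteness obviously implies finiteness, the chain $(a)\Rightarrow(b)\Rightarrow(c)\Rightarrow(d)$ is automatic; moreover $(a)$--$(c)$ are already equivalent by Proposition \ref{intro prop-crossed product} once I verify that a labeled graph over a finite alphabet with no sinks or sources is set-finite and may be taken to be weakly left-resolving, both of which I expect to be routine. To complete the equivalence through condition $(e)$ it therefore suffices to prove $(d)\Rightarrow(e)$ and $(e)\Rightarrow(a)$.

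For the implication $(e)\Rightarrow(a)$ my plan is to combine the range-disjointness with the Bates--Pask--Willis stable isomorphism underlying Proposition \ref{intro prop-crossed product} to upgrade it to a concrete description
$$C^*(E,\CL)\otimes\CK\;\cong\;\bigl(C(X)\times_\sm\Z\bigr)\otimes\CK,$$
where $X$ is a compact metrizable space and $\sm$ is a homeomorphism of $X$. The key point is that the implication $r(\af)\cap r(\bt)\neq\emptyset\Rightarrow\af=\bt$ forces the generalized vertex projections $\{p_{r(\af)}\}_{\af\in\CL(E^k)}$ to be pairwise orthogonal at every length $k$, so the projective limit of these level-$k$ partitions yields a compact zero-dimensional space of labeled paths on which the one-sided shift admits a continuous inverse. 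Our pseudo-periodicity hypothesis on $(E,\CL)$ should then translate, term by term, into pseudoperiodicity of every point of $X$ under $\sm$ in the sense of Theorem \ref{Pimsner thm}, and Pimsner's theorem delivers the AF-embeddability asserted in $(a)$.

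The main obstacle is the direction $(d)\Rightarrow(e)$, because one has to extract both a combinatorial conclusion (range-disjointness) and a dynamical one (pseudo-periodicity) from the purely algebraic hypothesis of finiteness. I will argue by contrapositive. If distinct $\af,\bt\in\CL(E^k)$ share a range vertex $v$, then the partial isometries $s_\af$ and $s_\bt$, together with the projection $p_{\{v\}}$, should allow one to assemble an element whose source projection strictly dominates its range projection inside a common projection of $C^*(E,\CL)$, contradicting finiteness. Analogously, a labeled path $\af$ that fails pseudo-periodicity should enable one to produce an infinite strictly descending sequence of subprojections of a fixed finite projection by iterating $s_\af s_\af^*$ against the generators that witness the failure of return, again contradicting finiteness. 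Carrying out these constructions in a manner fully compatible with the nonstandard Cuntz--Krieger relations that appear for labeled graphs is where the heaviest technical work will lie.
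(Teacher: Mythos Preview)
Your route to $(e)\Rightarrow(a)$---build a compact zero-dimensional space $X$ from the range-disjoint labeled paths, check that pseudo-periodicity of $(E,\CL)$ becomes Pimsner's pseudoperiodicity for the shift on $X$, and invoke Theorem~\ref{Pimsner thm}---is essentially sound and is, at bottom, what lies behind the paper's argument as well: under range-disjointness the paper identifies $C^*(E,\CL)$ with $C(\overline{\CL(E^{\infty}_{-\infty})})\times_\tau\Z$ (Lemma~\ref{lem cross product}), and Schafhauser's theorem, which the paper invokes, is itself proved via Pimsner.

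The genuine gap is in your $(d)\Rightarrow(e)$. For the range-disjointness half you want to use a projection $p_{\{v\}}$ at a single vertex $v\in r(\af)\cap r(\bt)$, but in a labeled graph $C^*$-algebra the only projections available are $p_A$ for $A\in\CE$, and $\CE$ consists of finite unions of generalized vertices $[w]_l$, almost never singletons. Even replacing $\{v\}$ by $A=r(\af)\cap r(\bt)\in\CE$, you obtain two orthogonal projections $s_\af p_A s_\af^*$ and $s_\bt p_A s_\bt^*$, each equivalent to $p_A$, but without a common upper bound that is itself equivalent to $p_A$ this does not exhibit an infinite projection, and the labeled-graph relations do not hand you such a bound. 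For the pseudo-periodicity half the plan is wrong as stated: an ``infinite strictly descending sequence of subprojections of a fixed projection'' does not contradict finiteness (every UHF algebra has such sequences). What you would need is a projection Murray--von~Neumann equivalent to a proper subprojection of itself, and there is no evident way to manufacture one directly from the failure of a labeled-path recurrence condition.

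The paper bypasses both difficulties by passing to the topological graph $\BE=(\BE^0,\BE^1,d,r)$ of $(E,\CL)$ (so that $C^*(E,\CL)\cong C^*(\BE)$ by \cite{COP:2017}) and invoking \cite[Theorem~6.7]{Sch:2015} as a black box: for compact $\BE$ with no sinks, $(a)$--$(d)$ are equivalent, and equivalent to ``$d$ is injective and every vertex admits an $\varepsilon$-pseudoloop for every $\varepsilon>0$''. The paper's own work is then purely the \emph{translation} back to labeled-graph language: Proposition~\ref{disjoint range} and Corollary~\ref{cor disjoint path} show that $d$ is injective precisely when the range condition in $(e)$ holds, and Lemma~\ref{pseudoloop} together with Proposition~\ref{prop path chain} show (under that hypothesis) that the pseudoloop condition on $\BE$ is exactly pseudo-periodicity of $(E,\CL)$. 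Thus $(d)\Rightarrow(e)$ is obtained not by hand-building infinite projections but by reading Schafhauser's characterization through the ultrafilter model $\BE^0=\widehat\CE$.
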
 
 
\vskip 1pc
 
\noindent
To prove Theorem \ref{intro thm 1}, we first view  
our labeled graph $C^*$-algebra $C^*(E,\CL)$ as 
the $C^*$-algebra of a Boolean dynamical system investigated by 
Carlsen, Ortega, and Pardo \cite{COP:2017} where they 
proved that a $C^*$-algebra of a Boolean dynamical system is 
isomorphic to a $C^*$-algebra of a topological graph which was 
introduced and  studied intensively by Katsura 
in \cite{Ka:2004, Ka:2006-1, Ka:2006-2, Ka:2008}.
Thus if $(E,\CL)$ is a labeled graph considered in 
Theorem \ref{intro thm 1}, then there always exists a topological graph 
$\BE=(\BE^0,\BE^1,d,r)$ such that $C^*(\BE)\cong C^*(E,\CL)$. 
Then we can possibly apply Schfhauser's results 
\cite{Sch:2015} where he proved that 
if $\BE$ is a compact topological graph, then 
$C^*(\BE)\cong C(\BE^\infty)\times_\sm \mathbb Z$ and 
so could apply  Theorem \ref{Pimsner thm} to obtain that 
AF-embeddability, quasidiagonalty, and stable finiteness 
are all equivalent to finiteness for $C^*(\BE)$. 
Thus equivalence of four properties 
$(a)-(d)$ of Theorem \ref{intro thm 1} follows from the 
isomorphism $C^*(\BE)\cong C^*(E,\CL)$. 

For condition $(e)$ of Theorem \ref{intro thm 1} 
which we are most interested in, we 
look at a combinatorial characterization 
on $\BE$ obtained by Schfhauser for a finite $C^*$-algebra $C^*(\BE)$.
(This characterization was obtained again from Theorem \ref{intro thm 1}). 
In our case of labeled graph $C^*$-algebras 
$C^*(E,\CL)\cong C^*(\BE)$, 
the compact vertex space $\BE^0$ is the Stone's spectrum 
of the smallest accommodating set $\CE$ 
generated by the range sets of all labeled paths 
(thus consists of the ultrafilters of $\CE$). 
Hence, when we consider the infinite paths $\BE^\infty$  
as sequences of vertices (hence sequences of ultrafilters 
in $\CE$), it is not clear how to explain them by 
using the labeled paths in the labeled graph $(E,\CL)$. 
Thus, the equivalence of the condition  $(e)$, described in terms of 
labeled graph itself, 
to the rest of four finiteness conditions is where the 
present paper makes a contribution. 
 
The second condition about ranges of labeled paths in 
Theorem \ref{intro thm 1}.(e) is equivalent to 
the injectivity of $d:\BE^1\to \BE^0$  of a topological graph 
$(\BE^0,\BE^1,d,r)$ associated to $(E,\CL)$ as 
we will see in Proposition \ref{disjoint range}. 
Note that this injectivity condition of $d$ is necessary for $C^*(\BE)$  
to be finite (or AF-embeddable).  
Under this necessary condition, we obtain 
the following: 

\vskip 1pc 

\begin{thm} {\rm (Theorem \ref{thm simple})}\label{intro thm simple} 
Let $E$ have no sinks or sources 
and $(E,\CL)$ be a labeled graph over finite alphabet $\CA$ with an 
infinite set $\overline{\CL(E^{\,\infty}_{-\infty})}$. 
If $(\BE^0,\BE^1, d,r)$ is the topological graph of $(E,\CL)$  
such that  $d:\BE^1\to \BE^0$ is injective,  
then we have the following:
\begin{enumerate}
\item[{\rm (a)}] 
$C^*(E,\CL)$ is simple if and only if  
every infinite word ${\bf a}\in \overline{\CL(E^{\,\infty}_{-\infty})}$ 
contains every finite path  in $\CL^*(E)$ as its finite word.
 
\item[{\rm (b)}] 
If $C^*(E,\CL)$ is simple, it is always AF-embeddable.
\end{enumerate}
\end{thm}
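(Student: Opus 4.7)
The plan is to invoke Schafhauser's crossed-product description of compact topological graph $C^*$-algebras and then translate the relevant dynamical conditions into statements about the labeled subshift.

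Since $\CA$ is finite and $E$ has no sinks or sources, the associated topological graph $\BE=(\BE^0,\BE^1,d,r)$ is compact. With $d:\BE^1\to \BE^0$ injective, Schafhauser's result gives
\[
C^*(E,\CL)\;\cong\;C^*(\BE)\;\cong\;C(\BE^\infty)\rtimes_\sm\Z,
\]
where $\sm$ is the shift, a homeomorphism of the compact metrizable space $\BE^\infty$. A key preliminary step is to identify $\BE^\infty$ with the labeled bi-infinite subshift $\overline{\CL(E^{\,\infty}_{-\infty})}\subset \CA^\Z$ as a topological dynamical system, and to verify that under this identification the finite factors appearing in elements of the subshift are exactly the elements of $\CL^*(E)$. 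The no-sinks/no-sources hypothesis lets every finite labeled path extend to a bi-infinite one, which supplies one direction of this factor identification.

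For part (a), recall that for a homeomorphism $\sm$ on an \emph{infinite} compact metric space $X$, the crossed product $C(X)\rtimes_\sm\Z$ is simple if and only if $\sm$ is minimal. (On an infinite space, minimality precludes periodic orbits, giving topological freeness, which together with minimality forces simplicity.) The assumption that $\overline{\CL(E^{\,\infty}_{-\infty})}$ is infinite rules out the finite-space degeneracy. Now minimality of the subshift $\overline{\CL(E^{\,\infty}_{-\infty})}$ is precisely the classical combinatorial statement that every finite factor appearing in any element of the subshift appears in every element, i.e., every infinite word in $\overline{\CL(E^{\,\infty}_{-\infty})}$ contains every finite path in $\CL^*(E)$ as a finite word.

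For part (b), suppose $C^*(E,\CL)$ is simple. By (a) the shift $\sm$ on $\BE^\infty$ is minimal, and every minimal homeomorphism of a compact metric space is pseudoperiodic in the sense of Theorem \ref{Pimsner thm}\,(e): given $x_1\in \BE^\infty$ and $\ep>0$, density of the forward orbit yields $n\ge 1$ with $\rh(\sm^n(x_1),x_1)<\ep$, and the choice $x_i:=\sm^{i-1}(x_1)$ for $2\le i\le n$ exhibits a pseudoperiodic return. Pimsner's Theorem \ref{Pimsner thm} then gives that $C(\BE^\infty)\rtimes_\sm\Z$, hence $C^*(E,\CL)$, is AF-embeddable. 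The main obstacle is the first step: making precise the identification of $\BE^\infty$ with $\overline{\CL(E^{\,\infty}_{-\infty})}$ when $\BE^0$ is built from ultrafilters in the accommodating set $\CE$. One must trace Katsura's infinite-path construction through this ultrafilter description and use the injectivity of $d$ to see that a bi-infinite path in $\BE$ is recorded faithfully by its label sequence, so that minimality of $\sm$ transports to the announced combinatorial condition on labels.
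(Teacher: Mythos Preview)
Your proposal is correct and, for the setup and part~(a), follows essentially the same route as the paper: the paper establishes the identification $\BE^\infty\cong\overline{\CL(E^{\,\infty}_{-\infty})}$ as $\Z$-dynamical systems in Lemma~\ref{infinite path space} (which is indeed the substantive step, as you flag), deduces the crossed-product description in Lemma~\ref{lem cross product}, and then reads off simplicity as minimality via the cylinder-set basis.

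For part~(b) you take a genuinely different path. The paper argues that the simple case yields a minimal Cantor system, invokes the structure theorem that the resulting crossed product is a limit circle algebra (Davidson, Theorem~VIII.7.5), hence has stable rank one and is stably finite, and then appeals to the equivalence of stable finiteness with AF-embeddability. Your argument is more direct and more elementary: minimality immediately gives pseudoperiodicity of the shift, and Pimsner's Theorem~\ref{Pimsner thm} (already in the paper's preliminaries) yields AF-embeddability outright. Your route avoids the Cantor hypothesis and the limit-circle-algebra machinery; the paper's route, on the other hand, extracts slightly more structural information (stable rank one) along the way.
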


\vskip 1pc

\noindent 
The space $\overline{\CL(E^{\,\infty}_{-\infty})}$ 
denotes the closure of the two-sided infinite paths of $(E,\CL)$
in the compact space $\CA^{\mathbb Z}$. 
Then it consists of all ${\bf a}\in \CA^{\mathbb Z}$ such that 
 each of its finite words must belong to $\CL^*(E)$ 
(that is, must appear a labeled path in $(E,\CL)$). 
In view of Theorem \ref{Pimsner thm}, we show in  Lemma \ref{lem cross product} 
that the $C^*$-algebra $C^*(E,\CL)$  is 
isomorphic to the crossed product 
$C(\overline{\CL(E^{\,\infty}_{-\infty})})\times_\tau \mathbb Z$ 
whenever $d$ is injective.
We can also state the following corollary for simple labeled graph $C^*$-algebras: 

\vskip 1pc

\begin{cor} {\rm (Corollary \ref{cor main})}
Let $E$ have no sinks or sources 
and $(E,\CL,\CE)$ be a labeled space over finite alphabet $\CA$. 
If $C^*(E,\CL,\CE)$ is simple, the following are equivalent: 
\begin{enumerate}
\item[{\rm (a)}] $C^*(E,\CL,\CE)$ is AF-embeddable.
\item[{\rm (b)}] $C^*(E,\CL,\CE)$ is quasidiagonal.
\item[{\rm (c)}] $C^*(E,\CL,\CE)$ is stably finite.
\item[{\rm (d)}] $C^*(E,\CL,\CE)$ is finite.
\item[{\rm (e)}] $r(a)\cap r(b)=\emptyset$ for $a,b\in \CA$ with $a\neq b$.
\item[{\rm (f)}] $C^*(E,\CL,\CE)^\gm$ is   
the diagonal subalgebra 
$\overline{\rm span}\{s_\af p_A s_\af^*\mid \af\in \CL^*(E),\ A\in \CE\}$.
\end{enumerate}
\end{cor}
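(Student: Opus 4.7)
The plan is to route all equivalences through the general results already established in the paper. First, the chain $(a)\Leftrightarrow(b)\Leftrightarrow(c)$ follows directly from Proposition~\ref{intro prop-crossed product}, independently of simplicity, and $(a)\Rightarrow(d)$ is automatic: any quasidiagonal $C^*$-algebra is stably finite, hence finite.

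For $(d)\Rightarrow(e)$, I would invoke Theorem~\ref{intro thm 1}, whose hypotheses (finite alphabet, no sinks or sources) are exactly those of the corollary. Finiteness yields the full path-level range condition $r(\af)\cap r(\bt)\neq\emptyset\Leftrightarrow\af=\bt$ for $\af,\bt\in\CL(E^k)$, $k\geq 1$, and specializing to $k=1$ gives $(e)$. The crucial step is $(e)\Rightarrow(a)$: by Proposition~\ref{disjoint range}, letter-level disjointness is equivalent to injectivity of $d:\BE^1\to\BE^0$, so simplicity combined with Theorem~\ref{intro thm simple}(b) yields AF-embeddability, closing the loop among $(a)$--$(e)$.

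For $(e)\Leftrightarrow(f)$, the implication $(e)\Rightarrow(f)$ is obtained by looping back through $(e)\Rightarrow(a)\Rightarrow(d)$ and invoking Theorem~\ref{intro thm 1} for the full path-level range disjointness; then any generator $s_\af p_A s_\bt^*$ of $C^*(E,\CL,\CE)^\gm$ with $\af\neq\bt$ has $A\subseteq r(\af)\cap r(\bt)=\emptyset$ and vanishes, collapsing the fixed-point algebra to the diagonal. For $(f)\Rightarrow(e)$, I would use the standard faithful conditional expectation from $C^*(E,\CL,\CE)^\gm$ onto the diagonal sending $s_\af p_A s_\bt^*\mapsto\delta_{\af,\bt}\,s_\af p_A s_\af^*$: under $(f)$ this expectation is the identity, so $s_a p_A s_b^*=0$ for $a\neq b$ and any $A\in\CE$ with $A\subseteq r(a)\cap r(b)$. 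The computation $s_a^*(s_a p_A s_b^*)s_b = p_{r(a)}p_A p_{r(b)} = p_A$ then forces $p_A=0$, and taking $A=r(a)\cap r(b)\in\CE$ gives $r(a)\cap r(b)=\emptyset$.

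The main obstacle is that letter-level disjointness $(e)$ does not directly imply path-level disjointness from the labeled graph combinatorics alone — two same-length paths agreeing on the final letter may still have overlapping ranges, so a purely combinatorial passage from $(e)$ to $(f)$ seems out of reach. The strategy avoids this by using simplicity through Theorem~\ref{intro thm simple}(b) to upgrade $(e)$ to AF-embeddability first, and only then extracting the path-level consequence needed to identify the fixed-point algebra with the diagonal subalgebra in $(f)$.
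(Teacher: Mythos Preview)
Your overall strategy is sound and the paper itself leaves Corollary~\ref{cor main} unproved, so there is nothing to compare against. However, there are two points where your argument needs tightening.

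First, in $(e)\Rightarrow(a)$ you invoke Theorem~\ref{intro thm simple}(b), but that theorem carries the hypothesis that $\overline{\CL(E^{\,\infty}_{-\infty})}$ is infinite, which you never verify. This is fillable: once $d$ is injective, Lemma~\ref{lem cross product} gives $C^*(E,\CL)\cong C(\overline{\CL(E^{\,\infty}_{-\infty})})\times_\tau\mathbb Z$, and a crossed product $C(X)\times_\sigma\mathbb Z$ with $X$ finite always has nontrivial centre (the period subgroup acts trivially), so simplicity forces $\overline{\CL(E^{\,\infty}_{-\infty})}$ to be infinite. You should say this.

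Second, your stated ``main obstacle'' is not actually an obstacle. Corollary~\ref{cor disjoint path} already shows, purely combinatorially via the weakly left-resolving hypothesis, that letter-level disjointness (equivalently, injectivity of $d$) implies $r(\af)\cap r(\bt)\neq\emptyset\Rightarrow\af\in(\bt]$ or $\bt\in(\af]$; for $|\af|=|\bt|$ this gives $\af=\bt$. So $(e)\Rightarrow(f)$ follows directly without the detour through AF-embeddability.

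Finally, for $(f)\Rightarrow(e)$ you appeal to a ``standard faithful conditional expectation'' onto the diagonal with a prescribed formula; this expectation is not established anywhere in the paper. A cleaner argument avoids it entirely: if $A=r(a)\cap r(b)\neq\emptyset$ with $a\neq b$, set $X=s_a p_A s_b^*\in C^*(E,\CL)^\gamma$; then $X^*X=s_b p_A s_b^*$ and $XX^*=s_a p_A s_a^*$ are nonzero projections with $(X^*X)(XX^*)=0$ (since $s_b^*s_a=0$), so $X$ is not normal and cannot lie in the commutative diagonal, contradicting $(f)$.
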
 

\vskip 1pc

This paper is organized as follows. 
In Section 2, we set up notation and briefly 
review some of the useful  
facts on labeled graph $C^*$-algebras,  topological 
graphs $\BE$ associated to labeled graph $(E,\CL)$, 
 and quasidiagonal (or AF-embeddable) $C^*$-algebras.  
Then in Section 3, it is obtained that 
AF-embeddability, quasidiagonality, and stable finiteness 
are all equivalent for labeled graph $C^*$-algebras. 
Section 4 is devoted to investigate the labeled graph $C^*$-algebras 
over finite alphabet. We analyze 
the topological graphs associated to  labeled graphs and 
  finiteness of their $C^*$-algebras 
  based on Schafhauser's result \cite{Sch:2015}. 
For this we employ various path spaces and fully make use of 
the generalized vertices to prove many lemmas that help us
explain the Stones' spectrum of the accommodating 
set $\CE$ and obtain the results that we believe are very  
convenient and tractable.

\vskip 1pc 
\noindent 
{\bf Acknowledgement} The first author is grateful to Astrid an Huef 
for many helpful suggestions.
 
\vskip 1pc 

\section{Preliminaries} 

\noindent
In this section we set up notation and review 
definitions and basic results we need in this paper. 
For more details, we refer the reader to \cite{BCP:2015}, 
\cite{COP:2017},  \cite{JKaP:2017}, and \cite{Sch:2015}.

\subsection{Labeled graphs and their $C^*$-algebras}
A  {\it directed graph} $E=(E^0,E^1,r,s)$
consists of  the vertex set $E^0$ and the edge set $E^1$ 
together with the range, source maps $r$, $s: E^1\to E^0$. 
We call  a vertex $v\in E^0$ a {\it sink} 
(a {\it source}, respectively)
if $s^{-1}(v)=\emptyset$  ($r^{-1}(v)=\emptyset$, respectively).
If every vertex in $E$ emits only finitely many edges, 
$E$ is called {\it row-finite}.  

For each $n\geq 1$, 
$E^n$  denotes the set of all  paths of length $n$, and 
the vertices in $E^0$ are regarded as finite paths of length zero. 
The maps $r,s$ naturally extend to the set 
$E^*=\cup_{n \geq 0} E^n$ of all finite paths, 
especially with $r(v)=s(v)=v$ for $v \in E^0$. 
By $E^{\infty}$, 
we denote the set of all  right-infinite paths 
$x=\ld_{1}\ld_{2}\cdots$, 
where $r(\ld_i)=s(\ld_{i+1})$ for all $i\geq 1$ 
and we define $s(x):= s(\ld_1)$. 
Similarly we will consider the set 
$$E_{-\infty}:=\{x=\cdots \ld_3\ld_2\ld_1\mid \ld_n\in E^1\ \text{and } 
r(\ld_{n+1})=s(\ld_n),\ n\geq 1\}$$
of all left-infinite paths($r(x):=r(\ld_1)$) 
and the set 
$$E^\infty_{-\infty}:=\{ x=\cdots \ld_{-1}\ld_0\ld_1\cdots\mid 
\ld_i\in E^1\ \text{and } r(\ld_{i})=s(\ld_{i+1}),\ i\in \mathbb Z\}$$ 
of all bi-infinite paths in $E$.

For $A,B\subset E^0$ and $n\geq 0$, we use the following notation 
 $$ AE^n: =\{\ld\in E^n :  s(\ld)\in A\},\ \
  E^nB: =\{\ld\in E^n : r(\ld)\in B\},$$
 and  $AE^nB: =AE^n\cap E^nB$ with  
$E^n v:=E^n\{v\}$, $vE^n:=\{v\}E^n$.
Also the sets of paths 
like $E^{\geq k}$, $AE^{\geq k}$, and $AE^\infty$ which 
have their obvious meaning will be used.  
A  {\it loop} is a finite path $\ld\in E^{\geq 1}$ 
such that $r(\ld)=s(\ld)$, and 
an {\it exit} of a loop $\ld$ is a path 
$\dt\in E^{\geq 1}$ such that  
$|\dt|\leq |\ld|,\ s(\dt)=s(\ld), \text{ and } 
\dt\neq \ld_1\cdots \ld_{|\dt|}.$ 
A graph $E$ is said to satisfy {\it Condition} (L)  
if every loop has an exit. 

A {\it labeled graph} $(E,\CL)$ over $\CA$ consists of 
a directed graph $E$ and  a {\it labeling map} 
$\CL:E^1\to \CA$ which is always assumed to be onto. 
Given a graph $E$, one can define a so-called 
{\it trivial labeling} map 
 $\CL_{id}:=id:E^1\to E^1$ which is the identity map 
 on $E^1$ with the alphabet $E^1$. 
The labeling map naturally extends to any 
finite and infinite labeled paths, namely 
if $\ld=\ld_1\cdots \ld_n\in E^n$, then  
 $\CL(\ld):=\CL(\ld_1)\cdots \CL(\ld_n)\in \CL(E^n)\subset \CA^*$, and  
 similarly to  infinite paths. 
 We often call these labeled paths just paths for convenience if 
 there is no risk of confusion,  
and use notation $\CL^*(E):=\CL(E^{\geq 1})$.
For a vertex $v\in E^0$ and a vertex subset $A\subset E^0$, 
we set $\CL(v): =v$ and $\CL(A): =A$, respectively. 
 A subpath  $\af_i\cdots \af_j$ of  
$\af=\af_1\af_2\cdots\af_{|\af|}\in \CL^*(E)$ 
 is denoted by
 $\af_{[i,j]}$ for $1\leq i\leq j\leq |\af|$.
 The range and source of a path $\af\in \CL^*(E)$ are defined 
to be the following sets of vertices
 \begin{align*}
r(\af) &=\{r(\ld) \in E^0 \,:\, \ld\in E^{\geq 1},\,\CL(\ld)=\af\},\\
 s(\af) &=\{s(\ld) \in E^0 \,:\, \ld\in E^{\geq 1},\, \CL(\ld)=\af\},
\end{align*}
and the {\it relative range of $\af\in \CL^*(E)$  
with respect to $A\subset  E^0$} is defined by
$$
 r(A,\af)=\{r(\ld)\,:\, \ld\in AE^{\geq 1},\ \CL(\ld)=\af \}.
$$
A collection  $\CB$ of subsets of $E^0$ is said to be
 {\it closed under relative ranges} for $(E,\CL)$ if 
$r(A,\af)\in \CB$ whenever 
 $A\in \CB$ and $\af\in \CL^*(E)$. 
We call $\CB$ an {\it accommodating set}~ for $(E,\CL)$
 if it is closed under relative ranges,
 finite intersections and unions and contains 
the ranges $r(\af)$ of all paths $\af\in \CL^*(E)$.
In other words, an accommodating set $\CB$ is a Boolean algebra such that  
$r(\af)\in \CB$ for all $\af\in \CL^*(E)$.

If $\CB$ is accommodating for $(E,\CL)$, 
the triple $(E,\CL,\CB)$ is called
 a {\it labeled space}. 
We say that a labeled space $(E,\CL,\CB)$ is {\it set-finite}
 ({\it receiver set-finite}, respectively) if 
for every $A\in \CB$ and $k\geq 1$ 
 the set  $\CL(AE^k)$ ($\CL(E^k A)$, respectively) is finite.
  A labeled space $(E,\CL,\CB)$ is said to be {\it weakly left-resolving} 
if 
 $$r(A,\af)\cap r(B,\af)=r(A\cap B,\af)$$
holds  for all $A,B\in \CB$ and  $\af\in \CL^*(E)$.
If $\CB$ is closed under relative complements, 
we call $(E,\CL, \CB)$ a {\it normal} labeled space.

\vskip 1pc 

\begin{notation}
For $A\in \CB$, we will use the following notation
$$\CI_A:=\{B\in \CB: B\subset A\}.$$ 
Note that $\CI_A$ is an ideal of the Boolean algebra $\CB$.
\end{notation}

\vskip 1pc 

\noindent 
{\bf Assumptions.}   
Throughout this paper, we assume that 
graphs $E$ have no sinks and sources,  
and labeled spaces $(E,\CL,\CB)$ are  
weakly left-resolving,  set-finite, receiver set-finite, and 
normal. 

\vskip 1pc 
 
\begin{dfn} 
\label{def-representation}
A {\it representation} of a labeled space $(E,\CL,\CB)$
is a family of projections $\{p_A\,:\, A\in \CB\}$ and
partial isometries
$\{s_a\,:\, a\in \CA\}$ such that for $A, B\in \CB$ and $a, b\in \CA$,
\begin{enumerate}
\item[(i)]  $p_{\emptyset}=0$, $p_{A\cap B}=p_Ap_B$, and
$p_{A\cup B}=p_A+p_B-p_{A\cap B}$,
\item[(ii)] $p_A s_a=s_a p_{r(A,a)}$,
\item[(iii)] $s_a^*s_a=p_{r(a)}$ and $s_a^* s_b=0$ unless $a=b$,
\item[(iv)]\label{CK4}  $p_A=\sum_{a\in \CL(AE^1)} s_a p_{r(A,a)}s_a^*.$ 
\end{enumerate}
\end{dfn}

\vskip 1pc  
\noindent
It is known \cite{BCP:2015,BP1:2007} that 
given a labeled space  $(E,\CL,\CB)$, 
there exists a $C^*$-algebra $C^*(E,\CL,\CB)$ generated by 
a universal representation $\{s_a,p_A\}$ of $(E,\CL,\CB)$, 
so that if $\{t_a, q_A\}$ is a representation of $(E,\CL,\CB)$ 
in a $C^*$-algebra $B$, there exists a $*$-homomorphism 
$$\phi: C^*(E,\CL,\CB)\to B$$ such that 
$\phi(s_a)=t_a$ and $\phi(p_A)=q_A$ for all $a\in \CA$ and 
$A\in \CB$.  
The $C^*$-algebra $C^*(E,\CL,\CB)$ is unique up to isomorphism,  
and we simply write $$C^*(E,\CL,\CB)=C^*(s_a,p_A)$$ 
to indicate the generators $s_a, p_A$ that 
are nonzero for all $a\in \CA$ and  $A\in \CB$, $A\neq \emptyset$. 
 
\vskip 1pc 
 
\begin{dfn} 
We call the $C^*$-algebra $C^*(E,\CL,\CB)$ generated by 
a universal representation of $(E,\CL,\CB)$ 
the {\it $C^*$-algebra of
a labeled space} $(E,\CL,\CB)$.
For a labeled graph $(E,\CL)$, there are many accommodating 
sets to be considered to form a labeled space. 
By $\CE$  we denote the smallest accommodating set 
for which $(E,\CL, \CE)$ is a normal labeled space. 
We  call $C^*(E,\CL,\CE)$ 
the {\it labeled graph $C^*$-algebra} of $(E,\CL)$ and often
denote it by $C^*(E,\CL)$.
\end{dfn}  

\vskip 1pc

Recall  
that a labeled space $(E,\CL, \CE)$ is said to be  {\it disagreeable}  
 if for any nonempty set $A\in \CE$ and a path $\bt\in \CL^*(E)$, 
there is an $n\geq 1$ such that $\CL(AE^{|\bt|n})\neq \{\bt^n\}$ 
(\cite[Definition 5.2]{BP2:2009} and \cite[Proposition 3.2]{JKaP:2017}). 
Below is the {\it Cuntz-Krieger uniqueness theorem} 
for  labeled graph $C^*$-algebras: 

\vskip 1pc 

\begin{thm} {\rm (\cite[Theorem 5.5]{BP1:2007}, \cite[Theorem 9.9]{COP:2017})} 
\label{CK uniqueness thm} 
Let  $\{t_a, q_A\}$ be a representation of a labeled space $(E,\CL,\CE)$ 
such that $q_A\neq 0$ for all nonempty $A\in \CE$. 
If $(E,\CL,\CE)$ is disagreeable, then 
the canonical homomorphism $\phi:C^*(E,\CL,\CE)=C^*(s_a, p_A)\to 
 C^*(t_a, q_A)$ such that $\phi(s_a)=t_a$ and 
$\phi(p_A)=q_A$  is an isomorphism. 
\end{thm}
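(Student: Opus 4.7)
The plan is to use the classical gauge-action / conditional-expectation strategy, with disagreeability playing the role of Condition (L) in the ordinary graph-algebra argument. Surjectivity of $\phi$ onto $C^*(t_a,q_A)$ is immediate from the fact that the $t_a,q_A$ generate the target, so the real content is injectivity. First, I would equip $C^*(E,\CL,\CE)$ with the gauge action $\gm:\T\to\mathrm{Aut}(C^*(E,\CL,\CE))$ defined by $\gm_z(s_a)=z s_a$ and $\gm_z(p_A)=p_A$, and set $\Ph(x)=\int_\T \gm_z(x)\,dz$ for the resulting faithful conditional expectation onto the fixed-point algebra $C^*(E,\CL,\CE)^\gm$. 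From the Bates-Pask-Willis structural analysis \cite{BPW:2012}, this fixed-point algebra is the closed linear span of $\{s_\af p_A s_\bt^*: |\af|=|\bt|,\ A\in \CE\}$ and admits an AF inductive-limit presentation whose matrix units are indexed by pairs of equal-length labeled paths together with a basis of $\CE$.

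Next, I would establish injectivity of $\phi$ restricted to the core $C^*(E,\CL,\CE)^\gm$. Each finite-dimensional building block of this AF core is a direct sum of matrix algebras whose minimal central projections are finite Boolean combinations of the $p_A$; the hypothesis $q_A\neq 0$ for every nonempty $A\in \CE$ ensures that each such Boolean combination maps to a nonzero element under $\phi$, so $\phi$ is injective on every building block and therefore on the inductive limit.

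The decisive step is then bootstrapping injectivity from the core to all of $C^*(E,\CL,\CE)$, where we cannot assume a priori a gauge action on $C^*(t_a,q_A)$. Given $x\in C^*(E,\CL,\CE)$ and $\ep>0$, the plan is to produce a projection $P$ inside the fixed-point algebra, constructed from a suitable $p_B$ with $B\in \CE$, satisfying (i) $\|PxP-P\Ph(x)P\|<\ep$ and (ii) $\|P\Ph(x)P\|\geq \|\Ph(x)\|-\ep$. Approximating $x$ by a finite polynomial $\sum_{\af,\bt} s_\af p_{A_{\af,\bt}} s_\bt^*$ and compressing by $P$, the diagonal terms $\af=\bt$ stay essentially intact, while each off-diagonal summand with $\af\neq \bt$ must be annihilated. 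This is where disagreeability enters: for each such $(\af,\bt)$ it produces, inside $A_{\af,\bt}$ (or a relative range thereof), a nonempty $B\in \CE$ whose iterated labeled continuations break any periodic compatibility between $\af$ and $\bt$, so that the corresponding $p_B$ kills the off-diagonal contribution under compression. Combining (i), (ii), and injectivity on the core yields $\|\Ph(x^*x)\|\leq\|\phi(x^*x)\|$, and faithfulness of $\Ph$ then forces $x=0$ whenever $\phi(x)=0$.

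The main obstacle will be constructing $P$ so that it simultaneously annihilates every off-diagonal term of a fixed finite approximation of $x$ while still retaining enough diagonal mass to fulfill (ii); this is the combinatorial heart of disagreeability in the labeled-graph setting, and is precisely where the argument genuinely diverges from the ordinary graph-algebra proof, since the same label may be carried by many distinct edges and the Bates-Pask-type ``removing sources'' argument must be replaced by a relative-range refinement inside the Boolean algebra $\CE$.
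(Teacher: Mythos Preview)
The paper does not prove this theorem at all: it is stated with citations to \cite[Theorem~5.5]{BP1:2007} and \cite[Theorem~9.9]{COP:2017} and no argument is supplied, since it is a known result being quoted for later use. Your sketch is therefore not competing with anything in the paper itself; it is essentially an outline of the proof in the cited references, which indeed proceeds via the faithful conditional expectation $\Ph$ onto the AF core, injectivity on the core from the hypothesis $q_A\neq 0$, and a compression argument using disagreeability to kill off-diagonal terms. In that sense your plan is aligned with the literature the paper invokes, and is a reasonable reconstruction of the standard argument.

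Two small points on the sketch itself. First, the bootstrap paragraph should be run with a positive element (replace $x$ by $x^*x$) so that the chain $\|\phi(x^*x)\|\geq \|\phi(Px^*xP)\|\approx\|\phi(P\Ph(x^*x)P)\|=\|P\Ph(x^*x)P\|\geq\|\Ph(x^*x)\|-\ep$ is valid, the equality using injectivity of $\phi$ on the core. Second, your description of what disagreeability buys is slightly off: the projection $P$ is not chosen to annihilate each off-diagonal $s_\af p_A s_\bt^*$ directly, but rather one first normalises the finite sum (using $s_\af^* s_\af=p_{r(\af)}$) so that off-diagonal terms take the form $s_\gm p_B$ or $p_B s_\gm^*$ with $\gm\neq\ep$, and then disagreeability supplies, inside any nonempty $B\in\CE$, a set $C$ with $\CL(CE^{|\gm|n})\neq\{\gm^n\}$, which is what forces $p_C s_\gm p_C=0$ after refining. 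This is the precise combinatorial input, and it is worth making explicit if you carry the argument through.
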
 
 
\vskip 1pc

\begin{remark}\label{basics} 
Let $(E,\CL,\CB)$ be a labeled space with $C^*(E,\CL,\CB)=C^*(s_a,p_A)$. 
By $\ep$, we denote a symbol (not in $\CL^*(E)$)
such that  
$a\epsilon=\epsilon a$, $r(\ep)=E^0$, and $r(A,\ep)=A$ 
for all $a\in \CA$ and $A \subset E^0$. 
Let $\CL^{\#}(E):=\CL^*(E)\cup \{\ep\}$ and 
let $s_\ep$ denote the unit of 
the multiplier algebra of $C^*(E,\CL,\CB)$.  
Then one can easily show that 
$$
C^*(E,\CL,\CB)=\overline{\rm span}\{s_\af p_A s_\bt^*\,:\,
\af,\,\bt\in  \CL^{\#}(E) ~\text{and}~ A \subseteq r(\af)\cap r(\bt)\}. 
$$
\end{remark}

\vskip 1pc

\subsection{Generalized vertices $[v]_l$}
For each $l\geq 1$,  
the relation $\sim_l$ on  $E^0$ given by 
$v\sim_l w$ if and only if 
$\CL(E^{\leq l} v)=\CL(E^{\leq l} w)$ 
is an equivalence relation, and the equivalence class 
$[v]_l$ of $v\in E^0$ is called a {\it generalized vertex} 
(or  a vertex simply).  
   If $k>l$,  then $[v]_k\subset [v]_l$ is obvious and
   $[v]_l=\cup_{i=1}^m [v_i]_{l+1}$
   for some vertices  $v_1, \dots, v_m\in [v]_l$ 
(\cite[Proposition 2.4]{BP2:2009}). 
Moreover,  we have
\begin{eqnarray}\label{acc set}
\CE=\big\{ \cup_{i=1}^n [v_i]_l:\, v_i\in E^0,\  l\geq 1,\,  n\geq 0 \big\},
\end{eqnarray}
with the convention  $\sum_{i=1}^0 [v_i]_l:=\emptyset$ 
by \cite[Proposition 2.3]{JKK:2014}. 
For each $l\geq 1$, we denote  by $\Om^l(E)$ 
the set of all generalized vertices $[v]_l$, $v\in E^0$.

$(E,\CL,\CE)$ is {\it strongly cofinal} if 
for each $x\in \overline{\CL(E^\infty)}$ and $[v]_l\in \CE$, 
there exist 
an $N\geq 1$ and a finite number of paths 
$\ld_1, \dots, \ld_m\in \CL(E^{\geq 1})$ such that 
$$r(x_{[1,N]})\subset \cup_{i=1}^m r([v]_l,\ld_i).$$  
It is now well known that $C^*(E,\CL,\CE)$ is simple if and only if 
$(E,\CL,\CE)$ is disagreeable and strongly cofinal 
(\cite{BP2:2009}, \cite{JP:2018}). 
  
\vskip 1pc 

\subsection{Infinite path spaces $\CL(E^\infty)$, $\CL(E_{-\infty})$, and 
$\CL(E^\infty_{-\infty})$} 
 
Let $\CA$ be a countable alphabet and let  
$\CA^*$ ($\CA^{\mathbb N}$ and
$\CA^{\mathbb Z}$, respectively) denote   
the set of all finite words 
(right-infinite and bi-infinite words respectively) in symbols of $\CA$. 
For $\af\in \CA^*$,  $(\af]$ denotes the set 
$\{\bt\af\mid \bt\in \CA^*\}\cup \{\af\}$ of all finite words 
that end with $\af$. Similarly 
$[\af)$ denotes the set of all finite words that start with $\af$.
With the product topology, 
$\CA^{\mathbb N}$ is a metrizable space in which the cylinder sets 
$$ [\af :=\{{\bf a}=a_1 a_2\cdots \in \CA^{\mathbb N} \mid 
{\bf a}_{[1,n]}:=a_1\cdots a_{n}=\af \},$$ 
$\af \in \CA^*$ ($|\af|=n$), form 
a countable basis of open-closed sets  
(see Section 7.2 of \cite{Ki:1998}). 
Similarly, the product space $\CA^{\mathbb Z}$ is 
also a metrizable space and the cylinder sets
$$ [\af.\bt] : =\{{\bf a}=\cdots  a_{-1}a_0a_1\cdots\mid 
{\bf a}_{[-n,-1]}=\af,\ {\bf a}_{[0,n]}=\bt\}, $$
$\af,\bt\in \CA^*$, form a basis, where 
${\bf a}_{[-n,-1]}:=a_{-n}\cdots a_{-1}$ and 
${\bf a}_{[0,n]}:=a_{0}\cdots a_{n}$.
We will also consider left-infinite words  
${\bf a}=\cdots a_{-3} a_{-2} a_{-1}$, 
and by $\CA^{-\mathbb N}$ we denote the set of all 
such words which also forms a metrizable space with a countable basis 
consisting of cylinder sets
$$ \af] :=\{{\bf a}=\cdots a_{-2} a_{-1}\in \CA^{-\mathbb N} \mid 
{\bf a}_{[-n,-1]}:=a_{-n}\cdots a_{-1}=\af \},$$ 
$\af \in \CA^*$ ($|\af|=n$).

Let $(E,\CL,\CE)$ be a labeled space with $\CL(E^1)=\CA$. 
By $\overline{\CL(E^\infty)}$, we denote  
the closure of  all infinite labeled paths $\CL(E^\infty)$ 
in $\CA^{\mathbb N}$. 
Then it is not hard to see that $\overline{\CL(E^\infty)}$ 
consists of all right-infinite words (or sequences) 
 ${\bf a} \in \CA^{\mathbb N}$ such that 
every finite word of ${\bf a}$ 
occurs as a labeled path in $\CL^*(E)$;
$$\overline{\CL(E^\infty)}:=\{{\bf a}\in \CA^{\mathbb N}\mid 
{\bf a}_{[1,n]}\in \CL(E^n) \ \text{for all } n\geq 1\}.$$  
Similarly, the closure $\overline{\CL(E_{-\infty})}$ of 
the left-infinite paths $\CL(E_{-\infty})$ in $\CA^{-\mathbb N}$ consists of 
the left-infinite words ${\bf a}\in \CA^{-\mathbb N}$ such that 
for each $n\geq 1$, ${\bf a}_{[-n,-1]}\in \CL(E^n)$.
The closure $\overline{\CL(E^{\infty}_{-\infty})}$ of the 
bi-infinite labeled paths in $\CA^{\mathbb Z}$ is of course 
consisting of bi-infinite words ${\bf a}\in \CA^{\mathbb Z}$ 
satisfying ${\bf a}_{[-n,n]}\in \CL(E^{2n+1})$ for each $n\geq 1$.

Note that if $\CA$ is finite, then $\CA^{\mathbb N}$, 
$\CA^{-\mathbb N}$, and $\CA^{\mathbb Z}$ are all compact spaces 
(with compact-open cylinder sets), hence our labeled path spaces 
$\overline{\CL(E^\infty)}$, $\overline{\CL(E_{-\infty})}$, and 
$\overline{\CL(E^\infty_{-\infty})}$ are all compact as well. 

\vskip 1pc

\subsection{Topological graphs of labeled spaces} 

A {\it topological graph} $\BE=(\BE^0,\BE^1, r,d)$ 
consists of locally compact second countable spaces $\BE^i$, $i=0,1$, and 
continuous maps $d,r:\BE^1\to \BE^0$ such that 
$d$ is a local homeomorphism.  
The $C^*$-algebra $C^*(\BE)$ of a topological graph $\BE$ was 
introduced and studied systematically in 
\cite{Ka:2004, Ka:2006-1, Ka:2006-2, Ka:2008}.   
Here we review from \cite[Section 5]{COP:2017} that 
there is a topological graph $\BE$ associated to a labeled space 
$(E,\CL,\CE)$ such that 
$C^*(\BE)\cong C^*(E,\CL,\CE)$ whenever  
$E$ has no sinks or sources and $\CA:=\CL(E^1)$ is a finite alphabet.
 
Let $E$ be a graph with no sinks or sources 
and $(E,\CL,\CE)$ be a labeled space over a finite alphabet $\CA=\CL(E^1)$. 
Then there exists a Boolean dynamical system 
$(\CE, \CA, \theta)$, $\theta=\{\theta_a\}_{a\in \CA})$, 
where $\theta_a$ has a compact range and a closed domain 
in the sense of \cite[Definition 3.3]{COP:2017}. 
To see this, first note that for each $a\in \CA$, 
the map  $\theta_a:\CE\to \CE$ given by 
$$\theta_a(A)=r(A,a)$$
is an action on  $\CE$, that is, 
$\theta_a$ is a Boolean algebra homomorphism such that 
$\theta_a(\emptyset)=\emptyset$.
Since $\theta_a(A)=r(A,a)\subset r(a)$  for $A\in \CE$  and  
 $$r(a)=\cup_{b\in \CA} r(ba)=\theta_a(\cup_{b\in \CA} r(b)),$$
 the range set $r(a)$ is the least upper-bound for $\{\theta_a(A)\}_{A\in \CE}$, 
 and  thus $\theta_a$ has  compact range $\CR_{\theta_a}:=r(a)$.  
Moreover, since $E^0=\cup_{b\in \CA} r(b)\in \CE$ ($E$ has no sources), 
we see from $\theta_a (E^0)=r(a)=\CR_{\theta_a}$ that 
 $\theta_a$ has a closed domain $\CD_{\theta_a}:=E^0$.

Recall that a {\it filter} of a Boolean algebra $\CB$ is 
a subset $\xi\subset \CB$ which satisfies the following properties: 
\begin{enumerate} 
\item[$\cdot$] $\emptyset \notin \xi$, 
\item[$\cdot$] if $A\in \xi$ and $A\subset B\in \CB$, then $B\in \xi$,
\item[$\cdot$] if $A,B\in \xi$, then $A\cap B\in \xi$. 
\end{enumerate} 
Moreover, a filter $\xi$ is called a {\it ultrafilter} if $A\in \xi$ and 
$A=B\cup C$ for $B,C\in \CB$, then either $B\in \xi$ or $C\in \xi$. 
An ultrafilter is a maximal filter.

Let $\widehat{\CE}$ be the set of all ultrafilters of $\CE$ 
with topology of which the cylinder sets 
$$Z(A):=\{\xi\in \widehat{\CE}: A\in \xi\},$$  
$A\in \CE$, form a basis. 
$\widehat{\CE}$ is called the Stone's spectrum of $\CE$. 
$Z(A)$ is compact and open for all $A\in \CE$, $A\neq \emptyset$. 
If $\CI$ is an ideal of $\CE$, it is a Boolean algebra itself and 
its Stone's spectrum  $\widehat{\CI}$ is 
canonically  embedded in $\widehat\CE$ via the map 
$\xi\mapsto \iota(\xi): \widehat{\CI}\to\widehat{\CE}$ given by 
$$\iota(\xi):=\{A\in \CE: A\supset B\ \text{ for } \exists B\in \xi\}.$$ 
In particular, for an ideal of the form 
$\CI_A:=\{B\in \CE: B\subset A\}$, $A\in \CE$, 
one can easily see that  
$$\iota(\widehat{\CI}_A):=\{ \iota(\xi): \xi\in \widehat{\CI}_A \}=Z(A),$$ 
hence $\iota(\widehat{\CI}_A)$ is a 
compact open subset of $\widehat{\CE}$ (see \cite[Section 2]{COP:2017}).
If $\CJ$ is an ideal of $\CE$ containing $\CI$, 
then 
$\iota(\xi)=\iota(\iota_{\widehat\CJ}(\xi))$ 
for each $\xi\in \widehat{\CI}$, where 
$\iota_{\widehat\CJ}(\xi):=
\{ A\in \CJ: A\supset B\ \text{ for } \exists B\in \xi  \}\in \widehat{\CJ}$.  
Particularly, for ideals $\CJ\supset \CI_A$ of $\CE$ we have  
\begin{eqnarray}\label{ideals}
\iota(\widehat{\CI}_A) =\iota(\iota_{\widehat\CJ}(\widehat{\CI}_A)) .
\end{eqnarray}
For each $a\in \CA$, we will write 
$\CI_a:=\CI_{r(a)}$, which then gives $\iota(\widehat{\CI}_a) =Z(r(a))$.

\vskip 1pc 

\begin{remark} 
Let $\CA=\CL(E^1)$ be finite.
For the Boolean homomorphism $\theta_a: \CE\to \CI_a(\subset \CE)$, $a\in \CA$,  
given by $\theta_a(A)=r(A,a)$, 
every $A\in \CI_a$ satisfies $A\subset r(a)=\theta_a(E^0)$. 
Hence by \cite[Lemma 2.9]{COP:2017}, 
$\theta_a$ induces a continuous map 
$\widehat{\theta}_a: \widehat\CI_a \to \widehat\CE$ given by 
\begin{eqnarray}\label{widehat theta}
\widehat{\theta}_a(\xi)=\{A\in \CE: \theta_a(A)\in \xi\}.
\end{eqnarray}
\end{remark}

\vskip 1pc

Now we review from \cite{COP:2017} 
how one can obtain a topological graph $\BE=(\BE^0,\BE^1,d,r)$ 
from a labeled space $(E,\CL,\CE)$ over a finite alphabet $\CA$. 
Let $(\CE, \CA, \theta)$ be the Boolean dynamical system of 
the labeled space $(E,\CL, \CE)$, namely 
$\theta=(\theta_a)_{a\in \CA}$ and $\theta_a: \CE\to \CE$ 
given by $\theta_a(A):=r(A,a)$ is a Boolean algebra homomorphism 
such that 
$\theta_a(\emptyset)=\emptyset$. 
Set $\BE^0:=\widehat{\CE}$ and 
$\BE^1:=\sqcup_{a\in \CA}\,  \widehat{\CI}_a $
be the disjoint union of Stone's spectrums 
$\{\, \widehat{\CI}_a \,\}_{a\in \CA}$ of the ideals 
$\CI_a$'s.
For convenience, let us write as in \cite{COP:2017} 
$$\BE^0=\{v_\xi:\xi\in \widehat\CE\, \}\ \ \text{and } \ \ 
\BE^1=\sqcup_{a\in \CA}\, \BE^1_a,$$
where $\BE^1_a =\{e^a_\xi: \xi\in  \widehat{\CI}_a \,\}$. 
Define $d, r: \BE^1\to \BE^0$ by 
$$d(e^a_\xi)=v_{\iota(\xi)} \ \ \text{ and }\ \ 
r(e^a_\xi)=v_{\widehat{\theta}_a  (\xi)}.$$
Here $\widehat{\theta}_a : \widehat{\CI}_a  \to \widehat{\CE}$ 
is the map given in (\ref{widehat theta}).
 Then by \cite[Proposition 5.2]{COP:2017} 
$\BE=(\BE^0,\BE^1,d,r)$ is a topological graph which we will 
call the  {\it topological graph} of $(E,\CL)$. 

\vskip 1pc 

\begin{thm} {\rm (\cite[Theorem 5.8 and Example 11.1]{COP:2017})} 
If $(E,\CL)$ is a labeled graph over finite alphabet $\CA$ and  
$\CO(\BE)$ is the $C^*$-algebra of the topological graph 
$\BE$ of $(E,\CL)$, then 
$$C^*(E,\CL) \cong \CO(\BE).$$  
\end{thm}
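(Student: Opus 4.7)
The plan is to establish $C^*(E,\CL) \cong \CO(\BE)$ by producing mutually inverse $*$-homomorphisms via the universal properties on each side. Both algebras are defined universally (universal representation for the labeled-space side, Cuntz--Pimsner universal property for the topological-graph side), so the natural strategy is to exhibit a representation of one object inside the other and to secure injectivity through a uniqueness theorem.

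First I would build a representation $\{q_A, t_a\}$ of $(E,\CL,\CE)$ inside $\CO(\BE)$: for $A \in \CE$ set $q_A := \mathbf{1}_{Z(A)}$ viewed as a projection in $C_0(\BE^0) \subset \CO(\BE)$, and for $a \in \CA$ set $t_a$ to be the image in $\CO(\BE)$ of the characteristic section $\mathbf{1}_{\widehat{\CI}_a} \in C_c(\BE^1)$. Relations (i) and (iii) of Definition \ref{def-representation} then reduce to Stone duality together with the fact that $d$ restricts to a homeomorphism $\widehat{\CI}_a \to Z(r(a))$; relation (ii) is the Stone-dual translation of $\theta_a(A) = r(A,a)$ via the definition of $\widehat{\theta}_a$; and relation (iv) is the Cuntz--Pimsner covariance condition applied to $q_A$, made into a finite sum by set-finiteness of $\CA$. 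Universality then delivers $\phi : C^*(E,\CL) \to \CO(\BE)$, and surjectivity follows because the $\mathbf{1}_{Z(A)}$ densely span $C_0(\BE^0)$ (the $Z(A)$'s being a basis of compact open sets for $\widehat{\CE}$) and, together with the $t_a$'s and their adjoints, generate $\CO(\BE)$ as a $C^*$-algebra.

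For injectivity I would construct the inverse in parallel: define $\pi_0 : C_0(\BE^0) \to C^*(E,\CL)$ on the dense $*$-subalgebra spanned by the $\mathbf{1}_{Z(A)}$'s by sending $\mathbf{1}_{Z(A)} \mapsto p_A$ and extending by continuity, define $\pi_1$ on the correspondence by $\mathbf{1}_{\widehat{\CI}_a} \mapsto s_a$, verify Katsura's covariance condition, and invoke the universal property of $\CO(\BE)$ to obtain $\psi : \CO(\BE) \to C^*(E,\CL)$ which is visibly inverse to $\phi$ on generators. The main obstacle is the well-definedness and isometry of $\pi_0$: one must argue that Boolean relations among the characteristic functions $\mathbf{1}_{Z(A)}$ lift to the same relations among the $p_A$ at the level of the $C^*$-norm, with no inflation of norms occurring. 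The cleanest route is to factor through the intermediate Boolean-dynamical-system algebra $\CO(\CE,\CA,\theta)$ of \cite{COP:2017}, whose gauge-invariant uniqueness theorem settles injectivity once one verifies that the canonical circle actions on both sides of $\phi$ intertwine --- reducing the question to comparing the (AF) fixed-point algebras, where the Boolean structure of $\CE$ and formula (\ref{acc set}) provide the rigidity needed to match $p_A$ with $\mathbf{1}_{Z(A)}$ in norm.
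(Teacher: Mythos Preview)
The paper does not supply its own proof of this statement; it is quoted verbatim from \cite[Theorem 5.8 and Example 11.1]{COP:2017} as a known result and used thereafter as a black box. There is therefore no argument in the present paper to compare your proposal against.

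That said, your outline is a reasonable reconstruction of how the cited proof in \cite{COP:2017} is organized, and in fact the two cited items correspond exactly to the two-step factorization you arrive at in your final paragraph: Example 11.1 of \cite{COP:2017} identifies $C^*(E,\CL)$ with the Boolean-dynamical-system algebra $\CO(\CE,\CA,\theta)$, and Theorem 5.8 of \cite{COP:2017} identifies $\CO(\CE,\CA,\theta)$ with $\CO(\BE)$ via a gauge-invariant uniqueness argument. So your instinct to route the isomorphism through the intermediate Boolean algebra, and to secure injectivity by intertwining gauge actions, matches the structure of the source; the direct construction of mutually inverse maps you sketch first would also work but carries the bookkeeping burden you yourself flag regarding the well-definedness of $\pi_0$.
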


\vskip 1pc

\subsection{Quasidiagonal $C^*$-algebras} 
 
A separable $C^*$-algebra $A$ is {\it quasidiagonal} 
if it has a faithful representation $\pi:A\to  B(H)$ such that
$\pi(A)$ is a quasidiagonal set of  operators in the sense that
there is an increasing sequence $p_1\leq p_2 \leq \dots$ of 
finite rank projections in $B(H)$ 
such that $p_n\to 1_H$ (SOT) and  
$$\|[\pi(a),p_n]\|\to 0\ \text{ for all } a\in A $$ 
(\cite{Br:2004}). 
It is well known that a quasidiagonal $C^*$-algebra does not have any  
infinite projections, in other words it is a finite $C^*$-algebra. 
Since the property of being quasidiagonal of a $C^*$-algebra $A$ 
is preserved to the matrix algebras $M_n(A)$ over $A$ for all $n\geq 1$, 
it follows that every quasidiagonal $C^*$-algebra is  stably finite,   
whereas the converse is not true. 
AF-embeddable $C^*$-algebras are examples of quasidiagonal $C^*$-algebras.
 
\vskip 1pc 
  
\section{AF-embeddable labeled graph $C^*$-algebras} 

\noindent
In order to see whether the equivalence results of 
Theorem \ref{Pimsner thm} or Theorem \ref{Brown thm} 
can be obtained for a labeled graph $C^*$-algebra, 
we first need to review 
the skew product of labeled graphs from \cite{BPW:2012}. 

Let $(E,\CL)$ be a labeled graph and $c,d:E^1\to G$ be  functions into 
a discrete group $G$. Then 
the {\it  skew product labeled graph} 
$(E\times_c G, \CL_d)$ over alphabet $\CA\times G$ consists of 
the skew product graph $(E^0\times G, E^1\times G, r_c, s_c)$ where 
$$r_c(e,g)=(r(e), gc(e)),\ \ s_c(e,g)=(s(e), g),$$ 
and the labeling map $\CL_d: (E\times_c G)^1\to \CA\times G$ 
given by 
$$\CL_d(e,g):=(\CL(e), gd(e)).$$ 
For example, if 
${\bf 1}:E^1\to G$ denotes the function given by 
${\bf 1}(e):=1_G$, the unit of $G$, for all $e\in E^1$,  
then $\CL_{\bf 1}(e,g)=(\CL(e), g)$ for all $(e,g)\in (E\times_c G)^1$.

\vskip 1pc 

\begin{prop}\label{prop-crossed product} 
Let $(E,\CL,\CE)$ be a weakly left-resolving, set-finite labeled space. 
Then the following are equivalent: 
\begin{enumerate} 
\item[{\rm (a)}] $C^*(E,\CL,\CE)$ is AF embeddable,
\item[{\rm (b)}] $C^*(E,\CL,\CE)$ is quasidiagonal, 
\item[{\rm (c)}] $C^*(E,\CL,\CE)$ is stably finite.
\end{enumerate}
\end{prop}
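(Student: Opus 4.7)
\smallskip\noindent\textbf{Proof proposal.}
My plan is to reduce the equivalence to Brown's Theorem \ref{Brown thm} via a crossed-product description of $C^*(E,\CL,\CE)$. The two implications $(a)\Rightarrow(b)\Rightarrow(c)$ are standard general facts recorded in Section~2.4: an AF-embeddable $C^*$-algebra is quasidiagonal (restrict the embedding to get a faithful quasidiagonal representation), and a quasidiagonal $C^*$-algebra is stably finite since quasidiagonality passes to matrix amplifications and excludes infinite projections. So the entire content of the proposition lies in $(c)\Rightarrow(a)$.

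For the nontrivial direction I would invoke the structural result of Bates, Pask, and Willis \cite{BPW:2012}. Under our standing assumptions (weakly left-resolving and set-finite), they realise
$$C^*(E,\CL,\CE)\otimes \CK \;\cong\; B\rtimes_{\af}\mathbb{Z},$$
where $B$ is AF and $\af$ is an automorphism. Concretely, $B$ can be taken to be the skew-product labeled graph $C^*$-algebra $C^*(E\times_{c}\mathbb{Z},\CL_{\bf 1})$ associated to the length cocycle $c\equiv 1$, which is AF because the skew product graph has no loops and the set-finite assumption makes each corner $\overline{\mathrm{span}}\{s_\af p_A s_\bt^* : |\af|=|\bt|=k,\ A\subset r(\af)\cap r(\bt)\}$ finite-dimensional; the $\mathbb{Z}$-action is translation in the second coordinate, which is dual to the gauge action via Takai/Green--Rosenberg.

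Once this identification is in place, Brown's Theorem \ref{Brown thm} applied to the pair $(B,\af)$ says that stable finiteness of $B\rtimes_\af\mathbb{Z}$ already implies AF-embeddability. Each of the three properties (a), (b), (c) is preserved by stable isomorphism, since AF-embeddability, quasidiagonality and stable finiteness all pass between $A$ and $A\otimes\CK$ (AF-embeddability via tensoring the target AF algebra with $\CK$, quasidiagonality since it is preserved by tensoring with $\CK$, and stable finiteness by definition). Hence the equivalence transfers back to $C^*(E,\CL,\CE)$ itself, closing the cycle $(c)\Rightarrow(a)\Rightarrow(b)\Rightarrow(c)$.

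The step I expect to require the most care is citing \cite{BPW:2012} in the precise generality we need: their stable-isomorphism theorem is written for the accommodating set they use, and one must verify it still applies to a general weakly left-resolving, set-finite labeled space $(E,\CL,\CE)$. Equivalently, one has to confirm that $C^*(E,\CL,\CE)^{\gamma}$ is AF under these hypotheses, which boils down to showing that the standard inductive system of finite-dimensional corners at levels $k\geq 1$ still exhausts the fixed-point algebra. Set-finiteness keeps each of these corners finite-dimensional, and the weakly left-resolving hypothesis guarantees that the required matrix-unit relations hold (so that the inclusions of the levels are well-defined $*$-homomorphisms). No new ingredient beyond \cite{BPW:2012} and Theorem \ref{Brown thm} should be required.
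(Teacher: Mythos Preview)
Your proposal is correct and follows essentially the same route as the paper: both reduce to Brown's Theorem~\ref{Brown thm} by invoking the Bates--Pask--Willis structural results \cite{BPW:2012} to realise $C^*(E,\CL,\CE)\otimes\CK$ as a crossed product of the AF skew-product labeled graph algebra $C^*(E\times_c\mathbb{Z},\CL_{\bf 1})$ by $\mathbb{Z}$. The paper's proof is slightly terser (it cites \cite[Theorem~6.10]{BPW:2012} to get AF-ness of the skew product via stable isomorphism with the gauge-fixed-point algebra rather than arguing directly), but the logical structure is the same.
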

\begin{proof} 
By Theorem \ref{Brown thm}, it suffices to show that 
$C^*(E,\CL,\CE)$ is stably  isomorphic to the crossed product 
of an AF algebra by $\mathbb Z$.

Define $c:E^1\to \mathbb Z$ by $c(e)=1$ for all $e\in E^1$ and 
consider the $C^*$-algebra  $C^*(E\times_c\mathbb Z, \CL_{\bf 1})$. 
By \cite[Theorem 2.11]{BPW:2012}, this $C^*$-algebra is isomorphic to 
the labeled graph $C^*$-algebra of the labeled graph 
$(E\times_c\mathbb Z, \CL_{\bf 1})$ in our sense, and  
by \cite[Theorem 6.10]{BPW:2012},  
it is stably isomorphic to 
the fixed point algebra $C^*(E,\CL,\CE)^\gm$ of the gauge action $\gm$ on
$C^*(E,\CL,\CE)$, namely 
$$C^*(E\times_c\mathbb Z, \CL_{\bf 1})\otimes \CK\cong 
C^*(E,\CL,\CE)^\gm\otimes \CK$$ 
where $\CK$ denotes the algebra of all compact operators on an  
infinite dimensional separable Hilbert space. 
The fixed point algebra is AF, whence  
$C^*(E\times_c\mathbb Z, \CL_{\bf 1})$ must be an AF algebra. 
Finally from \cite[Corollary 6.8]{BPW:2012} we  see that 
$$C^*(E\times_c\mathbb Z, \CL_{\bf 1})\times_{\tau} \mathbb Z
\cong C^*(E, \CL, \CE)\otimes \CK,$$ 
where 
$\tau:\mathbb Z\to Aut(C^*(E\times_c\mathbb Z, \CL_{\bf 1}))$ is the 
action of $\mathbb Z$ induced by the left labeled graph translation 
action (see Definition 4.5 and Theorem 4.6 of \cite{BPW:2012}). 
\end{proof}
 
\vskip 1pc
  
\section{AF-embeddable $C^*$-algebras of 
labeled graphs over finite alphabet} 

\noindent 
In this section, under the assumptions that 
$E$ is a graph with no sinks or sources and 
$(E,\CL)$ is a labeled graph over finite alphabet 
$\CA=\CL(E^1)$, we provide an equivalent condition for 
$C^*(E,\CL)$ to be quasidiagonal in terms of 
the  infinite path spaces of $(E,\CL)$. 
We also 
show that $C^*(E,\CL)$ is the crossed product of 
the commutative $C^*$-algebra 
$C(X)$ of the continuous functions on a compact metric space $X$ 
($=\overline{\CL(E^{\infty}_{-\infty})}$) 
by $\mathbb Z$ 
generated by the shift map $\sm$ which we define later. 

\vskip 1pc 

\noindent 
{\bf Standing Assumptions:} In this section, $E$ is a 
graph with no sinks or sources and 
$(E,\CL)$ is a labeled graph over finite 
alphabet $\CA:=\CL(E^1)$.

\vskip 1pc

\begin{prop} 
The topological graph $\BE=(\BE^0,\BE^1,d,r)$ of 
$(E,\CL)$ is compact and has no sinks. 
Moreover $d$ is surjective.
\end{prop}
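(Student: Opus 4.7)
The plan is to read off all three conclusions directly from the construction of $\BE$ recalled just above the statement, using only that $\CA$ is finite and that $E$ has no sources. The argument breaks naturally into three short pieces.

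For compactness, I would first note that, since $E$ has no sources, every vertex lies in some $r(a)$, whence $E^0=\bigcup_{a\in\CA}r(a)$. Each $r(a)$ belongs to $\CE$ and $\CA$ is finite, so $E^0\in\CE$ and serves as the maximum of this Boolean algebra; consequently the Stone spectrum $\BE^0=\widehat{\CE}$ is compact. Similarly, each ideal $\CI_a=\CI_{r(a)}$ is a Boolean algebra with top $r(a)$, so $\widehat{\CI}_a$ is compact, and $\BE^1=\sqcup_{a\in\CA}\widehat{\CI}_a$ is compact as a finite disjoint union of compact spaces.

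For surjectivity of $d$, I would invoke the identity $\iota(\widehat{\CI}_A)=Z(A)$ recorded in the excerpt: since $d(e^a_\eta)=v_{\iota(\eta)}$, the image of $d$ equals $\bigcup_{a\in\CA}\iota(\widehat{\CI}_a)=\bigcup_{a\in\CA}Z(r(a))$. Now for any $\xi\in\widehat{\CE}$, upward closedness of $\xi$ together with $E^0\in\CE$ forces $E^0\in\xi$; applying the ultrafilter splitting property inductively to the finite union $E^0=\bigcup_{a\in\CA}r(a)$ yields some $a\in\CA$ with $r(a)\in\xi$, i.e.\ $\xi\in Z(r(a))$. Thus $d$ is surjective. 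Since a sink in a topological graph is by definition a vertex outside the image of $d$, the absence of sinks is immediate from the surjectivity just proved.

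There is no substantial obstacle here: the only point that deserves to be flagged is that both hypotheses (``$\CA$ finite'' and ``$E$ has no sources'') are genuinely needed to ensure $E^0\in\CE$, after which the ultrafilter splitting step and the compactness of Stone spectra of unital Boolean algebras are standard.
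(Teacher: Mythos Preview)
Your proof is correct and rests on the same core observations as the paper's: $E^0=\bigcup_{a\in\CA}r(a)\in\CE$ is a top element (giving compactness of $\widehat{\CE}$), and every ultrafilter must contain some $r(a)$ (giving surjectivity of $d$, hence no sinks). You also spell out the compactness of $\BE^1$, which the paper leaves implicit.

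The one genuine difference is in the surjectivity step. Given $\xi\in\widehat{\CE}$, the paper explicitly constructs a preimage by setting $\xi_a:=\{B\cap r(a):B\in\xi\}$, argues by contradiction that $\emptyset\notin\xi_a$ for some $a$, and then checks that $\iota(\xi_a)=\xi$ by comparing ultrafilters. You instead invoke the identity $\iota(\widehat{\CI}_a)=Z(r(a))$ already recorded in the preliminaries, reducing the problem to showing $\widehat{\CE}=\bigcup_a Z(r(a))$, which is immediate from ultrafilter splitting on the finite union $E^0=\bigcup_a r(a)$. Your route is more economical and makes better use of what has already been set up; the paper's version is more self-contained and exhibits the preimage concretely. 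The underlying content is the same.
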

\begin{proof} 
Since $E^0=\cup_{a\in \CA}\,  r(a)\in \CE$,  
we will see that $\BE^0(=\widehat{\CE})$ is compact once we know  
$\widehat{\CE}=Z(E^0)$.
But this is rather obvious 
because every  ultrafilter $\xi\in \widehat{\CE}$ 
must contain the largest set $E^0$ in $\CE$. 

Now we show that $d$ is surjective. 
Let $v_\xi\in \BE^0$, and set
$$\xi_a:=\{B\cap r(a): B\in \xi\}, \ \ a\in \CA.$$ 
We show that there is an $a\in \CA$ such that 
$\xi_a\in \widehat{\CI}_a$ and $d(e^a_{\xi_a})=v_\xi$. 
First note that 
$\emptyset\notin \xi_a$ for some $a\in \CA$. In fact, 
if for every $a\in \CA$ there is a $B_a\in \xi$ such that 
$B_a\cap  r(a)=\emptyset$, 
then with $B:=\cap_a B_a\in \xi$ (which is nonempty since $\xi$ is a filter), 
we have 
$$E^0\cap B=\big( \cup_{b\in \CA} r(b)\big)\cap \big(\cap_{a\in \CA}B_a\big)
=\cup_{b\in \CA} (r(b)\cap (\cap_{a\in \CA} B_a)=\emptyset,$$
a contradiction. 
Choose $a\in \CA$ with $\emptyset\notin \xi_a$. 
Then $\xi_a\in \widehat{\CI}_a$  and thus we can consider an element  
$e^a_{\xi_a}\in \BE^1_a\subset \BE^1$. 
From the definition of $d$,  we have 
$d(e^a_{\xi_a}):= v_{\iota(\xi_a)}=v_{\xi}$ 
if and only if $\iota(\xi_a)=\xi$.
Now $\iota(\xi_a)=\xi$ follows from the fact that the ultrafilter 
$$\iota(\xi_a):=\{A\in \CE: A\supset B \ \text{ for } \exists  B\in \xi_a\}
=\{A\in \CE: A\supset B\cap r(a) \ \text{ for }\exists B\in \xi\}
$$ 
obviously contains the ultrafilter $\xi$. 
\end{proof}

\vskip 1pc

\begin{lem}\label{lem unique vertex} 
Let $\xi\in \widehat\CE$ be a ultrafilter. 
Then for each $l\geq 1$, there exists a unique $[v]_l \in \Om^l(E)$
such that  $[v]_l\in \xi$. 
This unique $[v]_l$  will be denoted $[\xi]_l$.  
\end{lem}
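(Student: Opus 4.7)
The plan is to deduce both existence and uniqueness directly from the explicit description of $\CE$ in equation (\ref{acc set}), using the finiteness of $\CA$ and the ultrafilter/filter properties.

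First I would observe that for each fixed $l\geq 1$, the set $\Om^l(E)$ of generalized vertices at level $l$ is finite. Indeed, the map $v\mapsto \CL(E^{\leq l}v)$ sends $E^0$ into the power set of $\CA^{\leq l}=\bigcup_{k=0}^l\CA^k$, which is a finite set because the standing assumption gives $\CA$ finite. Hence $\Om^l(E)=\{[v_1]_l,\ldots,[v_n]_l\}$ for some $n\geq 1$, and these classes partition $E^0$.

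Next I would verify that $E^0$ itself belongs to every ultrafilter $\xi$. From $E^0=\bigcup_{a\in\CA}r(a)$ and $\CA$ finite, $E^0\in\CE$ (this has already been noted in the paper). Since $E^0$ is the top element of the Boolean algebra $\CE$, the upward-closure axiom of a filter forces $E^0\in\xi$. Combining this with the partition $E^0=[v_1]_l\cup\cdots\cup[v_n]_l$, the ultrafilter axiom, applied inductively to the binary statement in the definition, yields some $[v_i]_l\in\xi$, giving existence.

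For uniqueness, suppose $[v]_l$ and $[w]_l$ are both in $\xi$. By the filter axiom, $[v]_l\cap[w]_l\in\xi$, so this intersection is nonempty. But $\sim_l$ is an equivalence relation, so two generalized vertices at the same level $l$ are either equal or disjoint. Hence $[v]_l=[w]_l$, which finishes the proof.

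I do not anticipate a serious obstacle: the entire argument is a direct unpacking of the ultrafilter axioms against the partition of $E^0$ by level-$l$ equivalence classes. The only point that might look delicate is the finiteness of $\Om^l(E)$, but this is immediate from $|\CA|<\infty$. Everything else is formal.
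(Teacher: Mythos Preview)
Your proof is correct and follows essentially the same approach as the paper's: write an element of $\xi$ as a finite union of generalized vertices, invoke the ultrafilter axiom to extract one of them, and deduce uniqueness from the disjointness of distinct $\sim_l$-classes. The only cosmetic difference is that you start from the top element $E^0$ and use the finite partition $E^0=\bigcup_i[v_i]_l$ (via $|\CA|<\infty$), whereas the paper starts from an arbitrary $A\in\xi$, uses the description \eqref{acc set} to write $A=\bigcup_i[v_i]_k$ with $k\geq l$, and then passes up to level $l$ by $[v_i]_k\subset[v_i]_l$.
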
 
\begin{proof}
Fix $l\geq 1$. 
Choose any $A:=\cup_{i=1}^m [v_i]_k\in \xi$. 
We may assume that $k\geq l$. 
Since $\xi$ is a ultrafilter, at least one of the generalized vertices 
$[v_i]_k$'s must belong to $\xi$. 
Assume that $[v_1]_k\in \xi$. 
Then $[v_1]_k\subset [v_1]_l$ and 
this implies $[v_1]_l\in \xi$. 
From $\emptyset\notin \xi$, we see that $\xi$ does not 
contain any vertex  $[v]_l$ other than $[v_1]_l$ because 
$[v]_l\cap [v_1]_l$ is either $[v_1]_l$ or empty.
\end{proof}

\vskip 1pc

If $d: \BE^1\to \BE^0$ is injective, 
then $d(e^a_\xi)=d(e^b_\eta)$ implies $a=b$ (hence $\xi,\eta\in \widehat{\CI}_a$) 
and $\iota(\xi)=\iota(\eta)$. 
Since $\xi\mapsto \iota(\xi): \widehat{\CI}_a \to \widehat\CE$ is injective, 
we also have  $\xi=\eta$.   

\vskip 1pc 

\begin{prop} \label{disjoint range} 
Let $(\BE^0,\BE^1, d,r)$ be the topological graph of 
$(E,\CL,\CE)$. Then 
$d:\BE^1\to \BE^0$ is injective 
if and only if 
\begin{eqnarray}\label{eq disjoint range} 
r(a)\cap  r(b)=\emptyset \ \ \text{ for } \ a\neq b.
\end{eqnarray} 
In particular, (\ref{eq disjoint range}) holds  if $C^*(E,\CL,\CE)$ is finite.
\end{prop}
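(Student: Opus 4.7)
The plan is to prove the equivalence by an ultrafilter analysis, and then derive the ``in particular'' clause by producing an explicit non-unitary isometry in $C^*(E,\CL,\CE)$ whenever the range-disjointness fails.

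For the direction \emph{disjoint ranges} $\Rightarrow$ \emph{$d$ injective}, suppose $d(e^a_\xi)=d(e^b_\eta)$, i.e.\ $\iota(\xi)=\iota(\eta)$. Since $r(a)$ is the top element of $\CI_a$, every ultrafilter of $\CI_a$ contains $r(a)$, so $r(a)\in\iota(\xi)$; similarly $r(b)\in\iota(\eta)$. As the filter $\iota(\xi)=\iota(\eta)$ contains both $r(a)$ and $r(b)$, it contains $r(a)\cap r(b)$; since ultrafilters omit $\emptyset$, this intersection is nonempty, forcing $a=b$ by hypothesis. Then $\xi=\eta$ by the injectivity of $\iota|_{\widehat{\CI}_a}$ noted just before the statement. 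Conversely (contrapositive), if $A:=r(a)\cap r(b)\neq\emptyset$ for some $a\neq b$, pick via Zorn any ultrafilter $\zeta$ of the nontrivial Boolean algebra $\CI_A$. The inclusions $\CI_A\subset\CI_a$ and $\CI_A\subset\CI_b$ supply $\xi:=\iota_{\widehat{\CI}_a}(\zeta)\in\widehat{\CI}_a$ and $\eta:=\iota_{\widehat{\CI}_b}(\zeta)\in\widehat{\CI}_b$; by equation~(\ref{ideals}), $\iota(\xi)=\iota(\zeta)=\iota(\eta)$. Thus $d(e^a_\xi)=d(e^b_\eta)$ while $e^a_\xi\neq e^b_\eta$ (they lie in the disjoint summands $\BE^1_a$ and $\BE^1_b$ of $\BE^1$), defeating injectivity of $d$.

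For the ``in particular'' clause, enumerate $\CA=\{a_1,\dots,a_n\}$ and use normality to define the pairwise disjoint sets $A_i:=r(a_i)\setminus\bigcup_{j<i}r(a_j)\in\CE$, whose disjoint union is $E^0$. Thus $p_{E^0}=\sum_i p_{A_i}$ is the unit of $C^*(E,\CL,\CE)$. Put $V:=\sum_{i=1}^n s_{a_i}p_{A_i}$. Using $s_{a_i}^*s_{a_j}=\delta_{ij}p_{r(a_i)}$ from axiom~(iii) together with $A_i\subset r(a_i)$, one obtains $V^*V=\sum_i p_{A_i}=p_{E^0}$, so $V$ is an isometry of the unital algebra. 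Meanwhile the Cuntz--Krieger relation~(iv) at $E^0$ gives $p_{E^0}=\sum_i s_{a_i}s_{a_i}^*$ as an orthogonal sum, and $VV^*=\sum_i s_{a_i}p_{A_i}s_{a_i}^*\leq p_{E^0}$, with equality if and only if $p_{A_i}=p_{r(a_i)}$ for every $i$, i.e.\ each $A_i=r(a_i)$---which is exactly the pairwise disjointness of the $r(a_i)$. Hence if disjointness fails, $V$ is a proper isometry, the unit of $C^*(E,\CL,\CE)$ is infinite, and the algebra is not finite.

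The hard part is conceptual rather than technical: spotting the telescoping $A_i:=r(a_i)\setminus\bigcup_{j<i}r(a_j)$ as the right device for converting the potentially redundant Cuntz--Krieger sum $\sum_i s_{a_i}s_{a_i}^*=p_{E^0}$ into a bona fide non-unitary isometry witnessing infiniteness. The ultrafilter bookkeeping in the equivalence portion is then routine once equation~(\ref{ideals}) is invoked.
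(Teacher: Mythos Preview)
Your proof of the equivalence is correct and essentially matches the paper's argument: both directions run through the same ultrafilter manipulation and the identity~(\ref{ideals}). Your forward direction is arguably a bit cleaner---observing directly that $r(a),r(b)\in\iota(\xi)$ forces $r(a)\cap r(b)\neq\emptyset$---but the content is the same.

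The ``in particular'' clause is where you diverge from the paper. The paper does not argue this at all: it simply invokes the isomorphism $C^*(E,\CL)\cong C^*(\BE)$ and then cites Schafhauser's structural result \cite[Theorem~6.7]{Sch:2015} that finiteness of $C^*(\BE)$ forces $d$ to be injective. Your route is genuinely different and more elementary: the telescoping sets $A_i=r(a_i)\setminus\bigcup_{j<i}r(a_j)$ and the isometry $V=\sum_i s_{a_i}p_{A_i}$ produce an explicit witness of infiniteness entirely within the labeled-graph presentation, with no appeal to the topological-graph model or to Schafhauser's theorem. This buys self-containment and transparency at essentially no cost. The paper's approach, on the other hand, situates the fact inside a broader equivalence (AF-embeddable $\Leftrightarrow$ finite, etc.) that it will use repeatedly later, so the citation is economical in context even if less illuminating for this isolated implication.
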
 
\begin{proof}
Let $d:\BE^1\to \BE^0$ be injective. 
Suppose  $r(a)\cap r(b)\neq \emptyset$ for some $a,b\in \CA$ with $a\neq b$.
Then for $A_{ab}:=r(a)\cap r(b)\in \CE$, 
the ideal $\CI_{A_{ab}}=\{B\in \CE: B\subset A_{ab}\}$ 
is nonempty.  
Choose $\xi\in \widehat{\CI}_{A_{ab}}$ and consider   
$\iota (\xi)$ in $\widehat{\CE}$. 
Then by (\ref{ideals}),
$$\iota (\xi)=\iota (\iota_{\widehat{\CI}_a}(\xi))=
\iota (\iota_{\widehat{\CI}_b}(\xi)).$$  
Therefore  $d$ maps the two different edges 
$e^a_{\iota_{\widehat{\CI}_a}(\xi)}$ and 
$e^b_{\iota_{\widehat{\CI}_b}(\xi)}$ in $\BE^1$ (since $a\neq b$) 
to the same vertex $v_{\iota (\xi)}$ in $\BE^0$, 
a contradiction to $d$ being injective. 

For the converse, assume that 
$r(a)\cap r(b)=\emptyset$ whenever $a\neq b$. 
To show the injectivity of $d$, suppose to the contrary that 
$$d(e^a_{\xi})=d(e^b_{\eta})\ \text{ for }\ 
e^a_{\xi}\neq e^b_{\eta}.$$ 
Then we have 
$\iota(\xi)=\iota(\eta)$, namely
$$\{A\in \widehat\CE: A\supset B\text{ for } \exists B\in \xi\}
=\{A\in \widehat\CE: A\supset B\text{ for } \exists B\in \eta\}.$$ 
If $a=b$, then the injectivity of $\iota$ gives 
$\xi=\eta$, which is not possible since $e^a_{\xi}\neq e^b_{\eta}$. 
So suppose  $a\neq b$. 
If $B\in \xi$, then $B\subset r(a)$ and 
so $B\cap B'=\emptyset$ for any $B'\in \eta$ since 
$B'\subset r(b)$, which means $B\notin \iota(\eta)$. 
But this is also impossible because $\xi\subset \iota(\xi)=\iota(\eta)$.  
 
The last assertion follows from the fact that 
if $C^*(E,\CL)\cong C^*(\BE)$ is finite, $d$ is injective  
(see \cite[Theorem 6.7]{Sch:2015}). 
\end{proof}

\vskip 1pc

\noindent
From the proof of the above proposition, we see that 
if $C^*(E,\CL)$ is finite,  
the Stone's spectrum of $\CE$ is the disjoint union of 
the compact open subsets $Z(r(a))$'s, that is  
$\widehat\CE=\sqcup_{a\in \CA}\, Z(r(a))$. Moreover 
we have the following corollary. 

\vskip 1pc 

\begin{cor} \label{cor disjoint path}
For the topological graph $(\BE^0,\BE^1, d,r)$  of $(E,\CL)$,  
the following are equivalent: 
\begin{enumerate}
\item[{\rm(a)}]  
$d:\BE^1\to \BE^0$ is injective.
\item[{\rm (b)}] 
For $\af,\bt\in \CL^*(E)$,  
$r(\af)\cap r(\bt)\neq \emptyset$ if and only if either 
$\af\in (\bt]  \text{ or } \bt\in (\af]$.
\item[{\rm (c)}] For each $[v]_l\in \Om^l(E)$, there exists a unique 
$a_l\cdots a_1\in \CL(E^l)$ such that $[v]_l=r(a_l\cdots a_1)$.
\end{enumerate}
\end{cor}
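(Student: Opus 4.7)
The plan is to derive the cyclic equivalences $(\mathrm{a})\Leftrightarrow(\mathrm{b})\Leftrightarrow(\mathrm{c})$, leveraging Proposition \ref{disjoint range}, which already identifies $(\mathrm{a})$ with the letter-level disjointness $r(a)\cap r(b)=\emptyset$ for $a\neq b\in\CA$.

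For $(\mathrm{a})\Leftrightarrow(\mathrm{b})$, the direction $(\mathrm{b})\Rightarrow(\mathrm{a})$ is immediate by specialization to single letters. For $(\mathrm{a})\Rightarrow(\mathrm{b})$, the ``if'' half is routine: if $\bt$ is a suffix of $\af$ then $r(\af)\subseteq r(\bt)$, so $r(\af)\cap r(\bt)=r(\af)\neq\emptyset$. For the ``only if'' half I first reduce to equal lengths by replacing $\bt$ by its length-$|\af|$ suffix $\bt'$ (noting $r(\bt)\subseteq r(\bt')$), and then induct on the common length $n$. The base $n=1$ is Proposition \ref{disjoint range}. For the inductive step, factor $\af=\af'\af_n$ and $\bt'=\bt''\bt'_n$; from $r(\af)\subseteq r(\af_n)$ and $r(\bt')\subseteq r(\bt'_n)$ the base forces $\af_n=\bt'_n$, and the weakly left-resolving identity
\[r(\af)\cap r(\bt')=r\!\bigl(r(\af'),\af_n\bigr)\cap r\!\bigl(r(\bt''),\af_n\bigr)=r\!\bigl(r(\af')\cap r(\bt''),\af_n\bigr)\]
yields $r(\af')\cap r(\bt'')\neq\emptyset$, closing the induction.

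For $(\mathrm{a})\Leftrightarrow(\mathrm{c})$ I route through $(\mathrm{b})$. Given $(\mathrm{b})$, any two labels in $\CL(E^l v)$ have $v$ in both their ranges and so coincide by $(\mathrm{b})$ applied at equal length; hence $\CL(E^l v)=\{\af_v^{(l)}\}$ is a singleton. The identification $[v]_l=r(\af_v^{(l)})$ is then a bookkeeping check: $w\in[v]_l$ forces $\af_v^{(l)}\in\CL(E^l w)$ hence $w\in r(\af_v^{(l)})$, while conversely $w\in r(\af_v^{(l)})$ together with $|\CL(E^l w)|=1$ forces $\CL(E^k w)$ to equal the length-$k$ suffix of $\af_v^{(l)}$ for each $k\leq l$, matching $v$. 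Uniqueness of $\af_v^{(l)}$ is immediate. Conversely, for $(\mathrm{c})\Rightarrow(\mathrm{a})$, I argue by contradiction: if $v\in r(a)\cap r(b)$ with $a\neq b$, apply $(\mathrm{c})$ at $l=1$ to get a unique $c\in\CA$ with $[v]_1=r(c)$, which forces $c\in\CL(E^1 v)\supseteq\{a,b\}$, and excludes $r(a)=r(b)$ via the uniqueness clause. A case analysis on whether $c\in\{a,b\}$ then shows one of $r(a),r(b)$ strictly contains $[v]_1$, making it a proper union of level-$1$ classes by the structural decomposition (\ref{acc set}). Passing to higher levels and using that the nested sequence $[v]_l\supseteq[v]_{l+1}\supseteq\cdots$ must stabilize (by finiteness $|\Om^l(E)|\leq 2^{|\CA|^l}$) while the (a)-failure generates multiple distinct labels in $\CL(E^l v)$ with the same range equal to $[v]_l$, one produces two distinct labels representing a single class, contradicting the uniqueness clause of $(\mathrm{c})$.

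The main obstacle is $(\mathrm{c})\Rightarrow(\mathrm{a})$: condition $(\mathrm{c})$ asserts only that each generalized vertex equals some $r(\af)$, not the converse, so the contradiction must be extracted by combining the uniqueness clause of $(\mathrm{c})$ with the combinatorial proliferation of length-$l$ labels at $v$ forced by the (a)-failure, together with the stabilization of the nested sequence $[v]_l$. The weakly left-resolving hypothesis is likewise essential in the $(\mathrm{a})\Rightarrow(\mathrm{b})$ induction, as without it the disjointness of single-letter ranges would fail to propagate to disjointness of longer same-length ranges.
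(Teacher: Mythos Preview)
Your treatment of $(a)\Leftrightarrow(b)$ is correct and coincides with the paper's: peel off the last letter using $r(\af)\cap r(\bt)=r(r(\af')\cap r(\bt''),\af_n)$ and invoke Proposition~\ref{disjoint range}. Your $(b)\Rightarrow(c)$ is likewise correct and is a fleshed-out version of what the paper leaves implicit (the paper simply declares $(b)\Leftrightarrow(c)$ ``obvious since $[v]_l$ is the intersection of ranges of finitely many paths in $\CL(E^l)$'').

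The direction $(c)\Rightarrow(a)$, however, does not go through as written. Two specific problems:
\begin{itemize}
\item The stabilization claim is false. The bound $|\Om^l(E)|\leq 2^{|\CA|^l}$ grows with $l$; it gives no control on the decreasing chain $[v]_1\supseteq [v]_2\supseteq\cdots$. When $E^0$ is infinite this chain can strictly decrease forever (think of the full shift), so there is no level at which it ``stabilizes''.
\item Even granting stabilization, your asserted contradiction is not established. From $v\in r(a)\cap r(b)$ you correctly get $|\CL(E^l v)|\geq 2$ for every $l$, and that at least one of $r(a),r(b)$ properly contains $[v]_1$. But you never show that two \emph{distinct} labels in $\CL(E^l v)$ have range \emph{equal to} $[v]_l$; you only know their ranges \emph{contain} $[v]_l$. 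The uniqueness clause in $(c)$ concerns paths whose range equals the class, so containment alone does not contradict it. (Your side remark that uniqueness ``excludes $r(a)=r(b)$'' has the same defect: it would only bite if $[v]_1=r(a)=r(b)$, which you have not arranged.)
\end{itemize}
Concretely, take $\CA=\{a,b,c\}$ with $r(a)=\{v,w\}$, $r(b)=\{v\}$, $r(c)=\{w\}$: here $(c)$ holds at level $1$ while $(a)$ fails, and your level-$1$ case analysis produces no contradiction; the actual obstruction only appears at level $2$ (where, e.g., $r(ab)=r(cb)=\{v\}$ violates uniqueness). A correct argument must explain \emph{why} failure of $(a)$ forces such a collision at some finite level, and your stabilization mechanism does not do this. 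The paper sidesteps the issue by treating $(b)\Leftrightarrow(c)$ as evident from the description of $[v]_l$ in terms of ranges; if you want an honest proof you should replace the stabilization step with a direct combinatorial argument producing two length-$l$ labels with identical range.
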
 
\begin{proof} 
$(a)\Rightarrow (b)$ 
Suppose $d$ is injective, or equivalently 
$r(a)\cap r(b)=\emptyset$ whenever $a\neq b$ in $\CA$, 
and let $\af,\bt\in \CL^*(E)$ have   
$r(\af)\cap r(\bt)\neq \emptyset$ . 
Writing $\af=\af'a$ and $\bt=\bt'b$ for some 
$\af', \bt'\in \CL^\#(E)$ gives $a=b$ immediately since 
$r(\af)\cap r(\bt)\subset r(a)\cap r(b)$. 
Also we see, from the assumption that $(E,\CL,\CE)$ is
weakly left-resolving,  that
$$r(\af)\cap r(\bt)=r(r(\af'),a)\cap r(r(\bt'),a)
=r(r(\af')\cap r(\bt'), a)\neq \emptyset$$ 
only when $\af'$ and $\bt'$ must end with the same letter. 
The same argument can be continued to conclude the assertion. 

\noindent
$(b)\Rightarrow (a)$ follows from Proposition \ref{disjoint range}.

\noindent
$(b)\Leftrightarrow (c)$ is obvious since $[v]_l$ is the intersection of 
ranges of finitely many paths in $\CL(E^l)$.
\end{proof}

\vskip 1pc
  
By $[\xi]_l$ we denote the unique generalized vertex 
$[v]_l$ in $\xi\in \widehat\CE$ obtained in Lemma \ref{lem unique vertex} . 
Then  it is easily checked that 
$$Z([v]_l)=\{\xi\in \widehat\CE: [\xi]_l=[v]_l\},$$ 
and the map 
\begin{equation} 
\xi\mapsto ([\xi]_1, [\xi]_2, \dots): 
\widehat{\CE}\to \underset{l\geq 1}{\prod} \Om^l(E)
\end{equation} 
is one-to-one (not necessarily onto). 
Moreover one can show that 
for an ultrafilter $\xi\in \widehat{\CI}_a$ and $l\geq 1$, 
there is a unique $[v]_l$ contained in $\xi(\subset \iota(\xi))$. 
Thus we may write $[\xi]_l$ for $[\iota(\xi)]_l$ 
with no risk of confusion. 

Now define a metric $\rho$ on $\widehat{\CE}$ as follows:  
$$\rho(\xi,\eta):=2^{-n}, \ \text{ where } 
n:=\min\{l\geq 1: [\xi]_l\neq [\eta]_l\}.$$ 
Then a simple calculation gives
\begin{eqnarray}\label{topology} 
{\rm ball}(\xi:\frac{1}{2^l}):=
\{\eta\in \widehat\CE: \rho(\xi,\eta)< \frac{1}{2^l}\}=
\{\eta\in \widehat\CE: \rho(\xi,\eta)\leq \frac{1}{2^{l+1}}\}=Z([v]_l), 
\end{eqnarray} 
which proves the following lemma.
 
\vskip 1pc 

\begin{lem} 
If $\BE=(\BE^0,\BE^1)$ is the topological graph of $(E,\CL)$, 
the metric $\rho$ is compatible with the topology on 
$\BE^0(=\widehat\CE\,)$. 
\end{lem}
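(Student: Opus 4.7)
The plan is to establish three things in succession: that $\rho$ is in fact a metric, that the identification of $\rho$-balls with the cylinder sets $Z([\xi]_l)$ asserted in (\ref{topology}) is correct, and finally that the family $\{Z([v]_l) : l\geq 1,\ [v]_l\in \Om^l(E)\}$ is a basis for the Stone topology on $\widehat\CE$. Combined, these yield that the $\rho$-topology coincides with the Stone topology.

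For the metric axioms, symmetry and non-negativity are immediate. Definiteness follows from Lemma \ref{lem unique vertex} together with the injectivity of the map $\xi\mapsto ([\xi]_l)_{l\geq 1}$ noted just after that lemma: if $\rho(\xi,\eta)=0$ then $[\xi]_l=[\eta]_l$ for every $l\geq 1$, whence $\xi=\eta$. In fact $\rho$ is an ultrametric, since whenever $[\xi]_k=[\eta]_k$ and $[\eta]_k=[\zeta]_k$ for all $k\leq n$, transitivity forces $[\xi]_k=[\zeta]_k$ for all $k\leq n$, giving $\rho(\xi,\zeta)\leq \max\{\rho(\xi,\eta),\rho(\eta,\zeta)\}$.

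For the identification (\ref{topology}), I would show both inclusions. If $\rho(\xi,\eta)\leq 2^{-(l+1)}$, then by definition $[\xi]_l=[\eta]_l$, so $\eta\in Z([\xi]_l)$. Conversely, if $[\xi]_l=[\eta]_l=[v]_l$, then for each $k<l$ the nesting $[v]_l\subset [\xi]_k\cap [\eta]_k$ holds; but by Lemma \ref{lem unique vertex} there is a \emph{unique} generalized vertex at level $k$ belonging to $\xi$ (respectively $\eta$), so $[\xi]_k$ and $[\eta]_k$ must both coincide with the unique $[w]_k\supset [v]_l$, and hence $\rho(\xi,\eta)\leq 2^{-(l+1)}$.

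The main step, and the only one requiring the structural formula (\ref{acc set}), is the third. Given any basic Stone-open set $Z(A)$ with $A\in \CE$ and any $\xi\in Z(A)$, by (\ref{acc set}) we may write $A=\cup_{i=1}^n [v_i]_l$ for some $l\geq 1$. Since $\xi$ is an ultrafilter containing $A$, at least one $[v_i]_l$ lies in $\xi$, and by Lemma \ref{lem unique vertex} this must be $[\xi]_l$; then $Z([\xi]_l)\subset Z(A)$ is a cylinder neighbourhood of $\xi$ sitting inside $Z(A)$. Combined with part (\ref{topology}), every Stone-open neighbourhood of $\xi$ contains a $\rho$-ball, and vice versa, so the two topologies coincide. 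The only mild obstacle is conceptual: one must remember that the Stone basis is indexed by arbitrary $A\in\CE$, and reducing to cylinders over generalized vertices relies crucially on both the representation (\ref{acc set}) and the ultrafilter maximality.
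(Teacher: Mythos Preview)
Your proposal is correct and follows the same line as the paper, only with considerably more detail: the paper's entire proof is the single display (\ref{topology}) preceding the lemma, with the metric axioms and the fact that $\{Z([v]_l)\}$ is a basis for the Stone topology left implicit. Your three-step argument makes explicit exactly those points, and the reasoning in each step (injectivity of $\xi\mapsto([\xi]_l)_l$ for definiteness, the nesting of generalized vertices for (\ref{topology}), and the decomposition (\ref{acc set}) plus ultrafilter maximality for the basis claim) is sound.
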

 
\vskip 1pc

Recall from \cite[Definition 6.6]{Sch:2015} that  
an {\it $\varepsilon$-pseudopath}, $\varepsilon>0$, 
in a topological graph $\BE=(\BE^0,\BE^1, r,d)$ is a finite sequence 
$\af=(e_n, \dots, e_1)$ of edeges in $\BE$ such that 
for each $i=1,\dots, n-1$, 
$$\rho(r(e_i), d(e_{i+1}))<\varepsilon,$$ 
where $\rho$ is a metric compatible with the topology of $\BE^0$. 
An $\varepsilon$-pseudopath is called an 
{\it $\varepsilon$-pseudoloop} based at 
$d(\af):=d(e_1)$ if 
$\rho(r(\af), d(\af))<\varepsilon$ where $r(\af):=r(e_n)$. 
It is known in \cite{Sch:2015} that 
if $\BE$ is a compact topological graph with no sinks such that 
the $C^*$-algebra $C^*(\BE)$ is AF-embeddable  (or equivalently 
finite), then for each $v\in \BE^0$ and $\varepsilon>0$, 
there exists an $\varepsilon$-pseudoloop based at $v$. 

We are now interested in finding conditions of a labeled space 
$(E,\CL)$ that gives rise to a quasidiagonal $C^*$-algebra 
$C^*(E,\CL)$ which can be viewed as the $C^*$-algebra of 
the topological graph $\BE$ of $(E,\CL,\CE)$. 
Thus we need to understand when an `$\varepsilon$-pseudoloop' exists
at each $\xi\in \widehat{\CE}=\BE^0$ 
in terms of labeled space $(E,\CL)$.  
For this, we start with measuring the distance between  range and 
domain vertices of two edges in $\BE^1$. 
\vskip 1pc 

\begin{lem} \label{lem distance of vertices}
Let $(\BE^0,\BE^1, d,r)$ be the topological graph of $(E,\CL)$ 
and let $e^a_\xi$ and $e^b_\eta$ be edges in $\BE^1$. Then
$$\rho\big( d(e^a_\xi), r(e^b_\eta)\big)\leq \frac{1}{2^{(m+1)}} 
\ \text{ if and only if }\ r([\xi]_l,b)=[\eta]_{l+1} 
\, \text{ for } 1\leq l \leq m.$$ 
Moreover, 
$d(e^a_\xi)=r(e^b_\eta)$ if and only if 
$r([\xi]_l,b)=[\eta]_{l+1}$ for $l\geq 1.$
\end{lem}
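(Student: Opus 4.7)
My plan is to translate both sides of the claimed equivalence into a common statement about ultrafilter membership for $\eta$. Starting from the metric condition, equation~(\ref{topology}) shows that $\rho(d(e^a_\xi),r(e^b_\eta)) \leq 1/2^{m+1}$ is equivalent to $r(e^b_\eta) \in Z([\iota(\xi)]_l)$ for every $l\leq m$; by the convention $[\iota(\xi)]_l = [\xi]_l$ noted after Lemma~\ref{lem unique vertex}, this amounts to $[\widehat{\theta}_b(\eta)]_l = [\xi]_l$ for $1\leq l\leq m$. Using the formula for $\widehat{\theta}_b$ in equation~(\ref{widehat theta}), a generalized vertex $[v]_l$ lies in $\widehat{\theta}_b(\eta)$ precisely when $r([v]_l,b)=\theta_b([v]_l)\in\eta$, so Lemma~\ref{lem unique vertex} identifies $[\widehat{\theta}_b(\eta)]_l$ as the unique $[v]_l$ satisfying $r([v]_l,b)\in\eta$. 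Thus the metric condition reduces to ``$r([\xi]_l,b)\in\eta$ for each $1\leq l\leq m$.''

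Next I would show that this membership condition is in turn equivalent to the stated equality $r([\xi]_l,b) = [\eta]_{l+1}$. The backward direction is immediate: Lemma~\ref{lem unique vertex} guarantees $[\eta]_{l+1}\in\eta$, whence any set equal to $[\eta]_{l+1}$ lies in $\eta$. For the forward direction, I would use the description of $\CE$ from equation~(\ref{acc set}) to write $r([\xi]_l,b)$ as a finite union $\bigcup_i [w_i]_{l+1}$ of level-$(l+1)$ generalized vertices. Since $\eta$ is an ultrafilter, some $[w_i]_{l+1}$ must belong to $\eta$, and by the uniqueness clause of Lemma~\ref{lem unique vertex} this one equals $[\eta]_{l+1}$. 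Ruling out the remaining terms in the decomposition then yields the desired equality $r([\xi]_l,b) = [\eta]_{l+1}$. The ``moreover'' assertion is the limiting case: $d(e^a_\xi)=r(e^b_\eta)$ means $\iota(\xi)=\widehat{\theta}_b(\eta)$ as ultrafilters of $\CE$, equivalently the two ultrafilters share the same generalized vertex at every level, which by the preceding reformulation is exactly $r([\xi]_l,b)=[\eta]_{l+1}$ for all $l\geq 1$.

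The main technical point, and the step I would expect to be the principal obstacle, is showing that $r([\xi]_l,b)$ is in fact a \emph{single} level-$(l+1)$ generalized vertex and not a proper union of several. This uses the weakly left-resolving property applied to the pair $([v_i]_l,[v_j]_l)$, which forces the level-$l$ preimages under the $b$-action to be either disjoint or equal, together with the observation that $[\xi]_l$ is itself a single $\sim_l$-class so that $r([\xi]_l,b)$ inherits a coherent structure at level $l+1$. Once this structural fact is in hand, the decomposition above collapses to a single term and the proof concludes.
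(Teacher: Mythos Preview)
Your overall reduction matches the paper's exactly: both translate the metric inequality into $[\iota(\xi)]_l=[\widehat\theta_b(\eta)]_l$ for $1\leq l\leq m$ and then, via the description (\ref{widehat theta}) of $\widehat\theta_b$, into the membership condition $r([\xi]_l,b)\in\eta$. The backward implication $r([\xi]_l,b)=[\eta]_{l+1}\Rightarrow r([\xi]_l,b)\in\eta$ is handled identically in both.

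Where you diverge from the paper is in the forward step $r([\xi]_l,b)\in\eta\Rightarrow r([\xi]_l,b)=[\eta]_{l+1}$. You propose to prove that $r([\xi]_l,b)$ is itself a single level-$(l+1)$ generalized vertex, but the justification you sketch does not establish this. Weakly left-resolving applied to distinct level-$l$ vertices $[v_i]_l\neq[v_j]_l$ only gives that $r([v_i]_l,b)$ and $r([v_j]_l,b)$ are \emph{disjoint}; it says nothing about each one being a single $\sim_{l+1}$-class. Indeed, without injectivity of $d$ (which this lemma does not assume), a vertex $w\in r([\xi]_l,b)$ may also receive a $c$-edge with $c\neq b$ from outside $[\xi]_l$ while another $w'\in r([\xi]_l,b)$ does not, and then $w\not\sim_{l+1} w'$, so $r([\xi]_l,b)$ meets several level-$(l+1)$ classes. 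Your ``coherent structure'' remark does not rule this out.

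The paper takes a different route to the equality: rather than analyzing $r([\xi]_l,b)$ directly, it exploits $\eta\in\widehat{\CI}_b$ to write $[\eta]_{l+1}=\cap_j r(\alpha_j b)=r(\cap_j r(\alpha_j),b)$, decomposes $\cap_j r(\alpha_j)$ as a union $\cup_i[w_i]_l$ of level-$l$ vertices, and observes that since $[\eta]_{l+1}=\cup_i r([w_i]_l,b)$ is a single generalized vertex, each nonempty term already equals it, say $[\eta]_{l+1}=r([w_1]_l,b)$. Then $r([\xi]_l,b)\supset r([w_1]_l,b)\neq\emptyset$ together with weakly left-resolving forces $[\xi]_l\cap[w_1]_l\neq\emptyset$, hence $[\xi]_l=[w_1]_l$, and the desired equality follows. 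The ingredient you are missing is this passage through the $\eta$-side: producing a level-$l$ vertex $[w_1]_l$ with $[\eta]_{l+1}=r([w_1]_l,b)$ and then comparing it to $[\xi]_l$, rather than attempting to show $r([\xi]_l,b)$ is indecomposable at level $l+1$.
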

\begin{proof}
By definition of $d$ and $r$,  
$\rho ( d(e^a_\xi), r(e^b_\eta))\leq \frac{1}{2^{(m+1)}}$ 
holds if and only if  
\begin{eqnarray}\label{distance ineq}
\rho(\iota(\xi), \widehat\theta_b(\eta))\leq \frac{1}{2^{(m+1)}},
\end{eqnarray} 
or $[\iota(\xi)]_l=[\widehat\theta_b(\eta)]_l$ 
for $1\leq l\leq  m$.
Since  
$$[\widehat\theta_b(\eta)]_l\in 
\widehat\theta_b(\eta) =\{A\in \widehat\CE: r(A,b)\in \eta\}$$ 
and $[\iota(\xi)]_l=[\xi]_l$, we have 
$r([\xi]_l,b)\in \eta$. 
But we can write $[\xi]_l=\cap_j r(\dt_j)$ 
for some paths $\dt_j\in \CL(E^l)$ of length $l$, 
and then 
$r([\xi]_l,b)=\cap_j r(\dt_j b)$ which can be written as 
a union of finitely many vertices in $\Om^{l+1}(E)$. 
Thus there is a $v\in E^0$ such that  
$[v]_{l+1}\subset r([\xi]_l,b)\in \eta$.
Then Lemma \ref{lem unique vertex} says that  
$[v]_{l+1}=[\eta]_{l+1}$. 
Since $\eta\in \widehat{\CI}_b$, we can write 
$[v]_{l+1}=[\eta]_{l+1}=\cap_j r(\af_jb)$ for 
some paths $\af_j$'s of length $l$. 
Then  
$$ \cap_j r(\af_jb)=
\cap_j r(r(\af_j),b)=r(\cap_j r(\af_j), b)
=r(\cup_i [w_i]_l,b)$$ 
 for some vertices  $[w_i]_l\in\Om^l(E)$ 
 because $\cap_j r(\af_j)$ is a finite union of 
vertices in $\Om^{l}$, which shows that 
 $[v]_{l+1}$ must be of the form  
$\cap_i\, r([w_i]_l, b)$, and 
$$r([\xi]_l,b)\supset [v]_{l+1}=r(\cup_i [w_i]_l,b)
= \cup_i r( [w_i]_l,b).$$ 
Thus we can conclude  that 
$[v]_{l+1}=\cup_i r( [w_i]_l,b)=r([w_1]_l,b)$ and 
$r([\xi]_l,b)\supset [v]_{l+1}= r( [w_1]_l,b)$. 
Therefore $[\xi]_l=[w_1]_l$, and hence it follows that 
$r([\xi]_l,b)=[v]_{l+1} =[\eta]_{l+1}$.

For the converse, 
suppose $r([\xi]_l,b)=[\eta]_{l+1}$ for $1\leq l \leq m$.  
Then $[\xi]_l\in \widehat\theta_b(\eta)$,
hence again by Lemma \ref{lem unique vertex}, 
$[\xi]_l=[\widehat\theta_b(\eta)]_l$ is obtained 
for $1\leq l\leq  m$. 
Thus we have (\ref{distance ineq}).
\end{proof}

\vskip 1pc 

\begin{lem}\label{pseudoloop}
Let $(\BE^0,\BE^1, d,r)$ be the topological graph of $(E,\CL)$ 
with an injective $d:\BE^1\to \BE^0$. 
Then for a sequence $\af=(e^{a_n}_{\xi_n}, \dots, e^{a_1}_{\xi_1})$in $\BE^1$, 
$\af$ is a 
$\frac{1}{2^{m+2}}$-pseudoloop at $\xi_1 \in \BE^0(=\widehat\CE)$ 
if and only if  
for each $1\leq i\leq  n$,
\begin{eqnarray} \label{ps-loop} 
[\xi_i]_m=r(a_{i+m-1}\cdots a_{i+1}a_i),
\end{eqnarray} 
where 
$a_j:=a_{j'}$ for $j=j'$ {\rm (mod $n$)}. 

\end{lem}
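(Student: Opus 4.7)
The plan is to convert the metric condition defining a $\frac{1}{2^{m+2}}$-pseudoloop into purely combinatorial statements about generalized vertices via Lemma \ref{lem distance of vertices}, and then to telescope those statements using the identity $r(r(A,\af),\bt)=r(A,\af\bt)$ for relative ranges.

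Before iterating, I would record the base case: by injectivity of $d$ (equivalently, the disjointness $r(a)\cap r(b)=\emptyset$ of Proposition \ref{disjoint range}) combined with Corollary \ref{cor disjoint path}(c), each range $r(a)$ is a single $1$-vertex. Since $\xi_j\in\widehat{\CI}_{a_j}$ has all its elements sitting inside $r(a_j)$, the unique $1$-vertex of $\iota(\xi_j)$ supplied by Lemma \ref{lem unique vertex} must coincide with $r(a_j)$; that is, $[\xi_j]_1=r(a_j)$ holds automatically, with no pseudoloop hypothesis needed.

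For the direct implication, the pseudoloop distance bound $\rho(r(e^{a_i}_{\xi_i}),d(e^{a_{i+1}}_{\xi_{i+1}}))<\frac{1}{2^{m+2}}$, valid for all $i$ taken mod $n$, translates by Lemma \ref{lem distance of vertices} into identities of the form $r([\xi_{i+1}]_l,a_i)=[\xi_i]_{l+1}$ for each $i$ and each relevant level $l$. Fixing $i$, I would apply this identity first at level $m-1$ and index $i$, then at level $m-2$ and index $i+1$, and so on, each substitution feeding into the outer relative range. Collapsing the nested expression by repeated use of $r(r(A,\af),\bt)=r(A,\af\bt)$ and then invoking the base case $[\xi_{i+m-1}]_1=r(a_{i+m-1})$ produces $[\xi_i]_m=r(a_{i+m-1}\cdots a_{i+1}a_i)$, as claimed.

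For the converse, I would first propagate the hypothesis to every lower level: since $[\xi_i]_k\supset[\xi_i]_m=r(a_{i+m-1}\cdots a_i)$ and the $k$-vertex containing a given $m$-vertex is unique (the $k$-letter suffix $a_{i+k-1}\cdots a_i$ of the labeled path produces a $k$-vertex by Corollary \ref{cor disjoint path}(c)), one obtains $[\xi_i]_k=r(a_{i+k-1}\cdots a_i)$ for each $1\le k\le m$ and each $i$ mod $n$. A direct computation then gives
$$r([\xi_{i+1}]_l,a_i)=r(r(a_{i+l}\cdots a_{i+1}),a_i)=r(a_{i+l}\cdots a_{i+1}a_i)=[\xi_i]_{l+1}$$
at the relevant levels, which reassembles via Lemma \ref{lem distance of vertices} into the distance bound required for a $\frac{1}{2^{m+2}}$-pseudoloop. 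The main obstacle will be the bookkeeping of the indices modulo $n$ together with the levels in the two lemmas: one must line up the range of $l$ at which the distance estimate in Lemma \ref{lem distance of vertices} is active with the cyclic extension $a_j=a_{j \bmod n}$, so that both directions of the iteration terminate on the single identity $[\xi_i]_m=r(a_{i+m-1}\cdots a_i)$ and nothing is lost in the wrap-around closure from $i=n$ back to $i=1$.
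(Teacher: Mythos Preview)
Your proposal is correct and follows essentially the same approach as the paper: both convert the metric pseudoloop condition to the relations $r([\xi_{i+1}]_l,a_i)=[\xi_i]_{l+1}$ via Lemma~\ref{lem distance of vertices}, establish the base case $[\xi_i]_1=r(a_i)$ from $\xi_i\in\widehat{\CI}_{a_i}$ together with Corollary~\ref{cor disjoint path}, and then iterate (the paper phrases this as induction on $l$, you as a telescoping). Your treatment of the converse is more explicit than the paper's, which dispatches that direction in a single sentence, but the underlying argument is the same.
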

\begin{proof} 
By Lemma \ref{lem distance of vertices}, we easily see that 
(\ref{ps-loop}) is sufficient for  $\af$ to be a 
$\frac{1}{2^{m+2}}$-pseudoloop at $\xi_1$. 

So we assume for the converse 
that $\af$ is a $\frac{1}{2^{m+2}}$-pseudoloop at $\xi_1$. 
Then  
\begin{align*} 
\rho\big(d(e^{a_{i+1}}_{\xi_{i+1}}), r(e^{a_{i}}_{\xi_{i}})\big)
& \leq \frac{1}{2^{m+1}}
\ \text{ for } 1\leq i \leq n-1,\\ 
\rho\big(d(e^{a_{1}}_{\xi_{1}}), r(e^{a_{n}}_{\xi_{n}})\big)
& \leq \frac{1}{2^{m+1}},
\end{align*}
which is, by Lemma \ref{lem distance of vertices}, 
equivalent to the following: 
$$r([\xi_{i+1}]_l,a_i) =[\xi_i]_{l+1} 
\ \ {\rm for } \ \ 1\leq i \leq n\ ({\rm mod}\ n),\ 1\leq l\leq m.$$
From Corollary \ref{cor disjoint path} and 
the fact that $\xi_i\in \widehat{\CI}_{a_i}$, we have 
$$[\xi_i]_1=r(a_i) \ \text{ for }1\leq i\leq n \,({\rm mod}\ n).$$
Hence 
$[\xi_i]_2=r([\xi_{i+1}]_1,a_i)=r(r(a_{i+1}),a_i)=r(a_{i+1} a_i)$ 
for $1\leq i\leq n\, ({\rm mod}\ n)$.
(\ref{ps-loop}) is then obtained by induction on 
$l$, $1\leq l\leq m$. 
\end{proof}

\vskip 1pc 

The following lemma is an easy observation. 

\vskip 1pc

\begin{lem}\label{ultrafilter of vertices} 
Let $\{[v_k]_k\}_{k=1}^\infty$ be a decreasing sequence of 
generalized vertices in $\CE$. Then 
$$\xi:=\{A\in \CE: A\supset [v_k]_k \ \text{ for some } k\geq 1\}$$ 
is a ultrafilter in $\widehat\CE$, and 
$[\xi]_k=[v_k]_k$ for all $k\geq 1$. 
\end{lem}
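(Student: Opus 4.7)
The plan is to verify the filter axioms for $\xi$ directly, then upgrade $\xi$ to an ultrafilter by exploiting the fact that distinct generalized vertices at a common level partition $E^0$, and finally read off $[\xi]_k=[v_k]_k$ from the uniqueness statement in Lemma \ref{lem unique vertex}.

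For the filter axioms, I would note first that $\emptyset \notin \xi$ because each $[v_k]_k$ is a nonempty equivalence class, and that upward closure is immediate from the defining formula. Closure under finite intersections uses the decreasing hypothesis: given $A \supset [v_k]_k$ and $A' \supset [v_j]_j$ in $\xi$, setting $m := \max(j,k)$ yields $[v_m]_m \subset [v_k]_k \cap [v_j]_j \subset A \cap A'$, so $A \cap A' \in \xi$.

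The main step is maximality. Suppose $A = B \cup C \in \xi$ with $B,C \in \CE$ and $A \supset [v_k]_k$. By (\ref{acc set}) both $B$ and $C$ are finite unions of generalized vertices, and by the refinement property of \cite[Proposition 2.4]{BP2:2009} every generalized vertex $[w]_\ell$ decomposes into a finite union of generalized vertices at any higher level. Hence I can choose $m \geq k$ large enough that $B$ and $C$ are each a finite union of level-$m$ generalized vertices. Since $\sim_m$ is an equivalence relation, the level-$m$ classes partition $E^0$; in particular $[v_m]_m \subset [v_k]_k \subset B \cup C$ is itself a single level-$m$ class and must therefore coincide with one of the level-$m$ classes occurring in the decomposition of $B$ or of $C$. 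This forces $[v_m]_m \subset B$ or $[v_m]_m \subset C$, so $B \in \xi$ or $C \in \xi$, proving $\xi$ is an ultrafilter.

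For the identification, $[v_k]_k \in \xi$ trivially, and Lemma \ref{lem unique vertex} guarantees there is exactly one level-$k$ generalized vertex in $\xi$, namely $[\xi]_k$, so $[\xi]_k = [v_k]_k$. The only subtle point is the maximality step: one has to commit to a common refinement level $m$ at which $B$, $C$, and $[v_k]_k$ are simultaneously expressible as unions of level-$m$ classes, and then invoke the partition property of $\sim_m$ to conclude the dichotomy. Everything else is bookkeeping against the definition of $\xi$.
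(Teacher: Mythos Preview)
Your proposal is correct. The paper does not actually give a proof of this lemma; it is introduced with the sentence ``The following lemma is an easy observation'' and left to the reader. Your argument supplies precisely the details one would expect: the filter axioms are immediate from the decreasing hypothesis, the ultrafilter property is obtained by refining $B$ and $C$ to a common level $m$ via (\ref{acc set}) and \cite[Proposition~2.4]{BP2:2009} and then using that the $\sim_m$-classes partition $E^0$, and the identification $[\xi]_k=[v_k]_k$ follows from Lemma~\ref{lem unique vertex}. This is exactly the intended route, using only the paper's own machinery.
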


\vskip 1pc 
\noindent 

\begin{remarks} \label{remark finite sequence}
(a) By the previous lemma, we can identify $\widehat\CE$ with 
the set of all decreasing sequences $\{[v_k]_k\}_{k=1}^\infty$ 
of generalized vertices. 

(b) If 
$\{[w_k]_k\}_{k=1}^m$ is a finite sequence of generalized vertices, 
there exist infinite decreasing sequences 
$\{[v_l]_l\}_{l\geq 1}$ of generalized vertices such that $[v_k]_k=[w_k]_k$   
for $1\leq k\leq m$. Thus from the sequence $\{[w_k]_k\}_{k=1}^m$ 
we obtain ultrafilters $\xi\in \widehat\CE$ such that 
$[\xi]_k=[w_k]_k$ for $1\leq k\leq m$ as mentioned in (a).
\end{remarks}

\vskip 1pc 

\begin{lem} \label{lem main}
Let $(\BE^0,\BE^1, d,r)$ be the topological graph of $(E,\CL)$
and let $d:\BE^1\to \BE^0$ be injective.  
If $\xi\in \widehat\CE$, $m\geq 1$, and  $a_1, \dots, a_n\in\CA$ satisfy 
\begin{enumerate} 
\item[{\rm (i)}] $[\xi]_m=r(a_m\cdots a_1)$, 
\item[{\rm (ii)}] $a_{i+m-1}\cdots a_i\in \CL(E^m)$ for each $1\leq i\leq n$, 
\end{enumerate} 
with the convention that $a_l=a_{l'}$ whenever $l=l'\, ({\rm mod}\, n)$, 
then there exists a $\frac{1}{2^{m+2}}$-pseudoloop based at $\xi$. 
\end{lem}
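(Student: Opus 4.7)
The plan is to reduce everything to Lemma \ref{pseudoloop}, which characterizes $\frac{1}{2^{m+2}}$-pseudoloops in $\BE^1$ as sequences $(e^{a_n}_{\xi_n},\ldots,e^{a_1}_{\xi_1})$ satisfying $[\xi_i]_m = r(a_{i+m-1}\cdots a_{i+1}a_i)$ for every $1\leq i\leq n$ (indices mod $n$). Thus the task becomes: construct, for each $i$, a ultrafilter $\xi_i\in \widehat{\CI}_{a_i}$ such that $\iota(\xi_1)=\xi$ (so that the resulting pseudoloop is based at $\xi$) and $[\iota(\xi_i)]_m = r(a_{i+m-1}\cdots a_i)$.

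For $i=1$, hypothesis (i) gives $[\xi]_m = r(a_m\cdots a_1)\subset r(a_1)$, so $r(a_1)\in\xi$ and I can define
\[
\xi_1 := \{A\cap r(a_1) : A\in \xi\}.
\]
A routine check shows that $\xi_1$ is a ultrafilter in $\widehat{\CI}_{a_1}$ with $\iota(\xi_1)=\xi$, so in particular $[\xi_1]_m = [\xi]_m = r(a_m\cdots a_1)$.

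For $2\leq i\leq n$, I first observe that since $a_{i+m-1}\cdots a_i\in \CL(E^m)$ by (ii), each tail word $a_{i+l-1}\cdots a_i$ with $1\leq l\leq m$ lies in $\CL(E^l)$; by Corollary \ref{cor disjoint path}(c), which uses the injectivity of $d$, each $r(a_{i+l-1}\cdots a_i)$ is then a generalized vertex in $\Om^l(E)$. These generalized vertices form a decreasing sequence in $l$, since prepending a letter to a labeled path only shrinks its range. By Remark \ref{remark finite sequence}(b), I extend $\{r(a_{i+l-1}\cdots a_i)\}_{l=1}^m$ to an infinite decreasing sequence of generalized vertices, producing a ultrafilter $\eta_i\in \widehat\CE$ with $[\eta_i]_l = r(a_{i+l-1}\cdots a_i)$ for $1\leq l\leq m$; and then set $\xi_i := \{A\cap r(a_i) : A\in \eta_i\} \in \widehat{\CI}_{a_i}$, which satisfies $\iota(\xi_i)=\eta_i$ and hence $[\xi_i]_m = r(a_{i+m-1}\cdots a_i)$.

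With these $\xi_1,\ldots,\xi_n$ in hand, Lemma \ref{pseudoloop} immediately gives that $(e^{a_n}_{\xi_n},\ldots,e^{a_1}_{\xi_1})$ is the desired $\frac{1}{2^{m+2}}$-pseudoloop based at $\xi$. The only delicate point is verifying that the finite decreasing sequence of generalized vertices used to build each $\eta_i$ is well defined; this rests essentially on Corollary \ref{cor disjoint path}(c), and hence on the injectivity hypothesis for $d$ required in the statement.
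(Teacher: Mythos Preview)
Your proof is correct and follows essentially the same approach as the paper's: both arguments use Corollary~\ref{cor disjoint path}(c) to identify $r(a_{i+k-1}\cdots a_i)$ as a generalized vertex in $\Om^k(E)$, invoke Remarks~\ref{remark finite sequence}(b) to produce ultrafilters $\xi_i$ with the prescribed $[\xi_i]_k$, and then appeal to Lemma~\ref{pseudoloop}. The only difference is that you are more explicit than the paper about distinguishing $\widehat{\CI}_{a_i}$ from $\widehat\CE$ (restricting $\xi$ and $\eta_i$ to $\CI_{a_i}$ to obtain genuine edges $e^{a_i}_{\xi_i}$), whereas the paper simply writes ``$\xi(=\xi_1)$'' and tacitly uses the identification mentioned after~(\ref{topology}).
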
 
\begin{proof}
First note that $[\xi]_k=r(a_k\cdots a_1)$ holds for 
all $1\leq k\leq m$.
Set 
$$[v_i]_k:= r(a_{i+k-1}\cdots a_i)$$ 
for $1\leq i\leq n$ and $1\leq k\leq m$, 
which is well-defined by Corollary \ref{cor disjoint path}.
Then for the decreasing finite sequence 
$[v_i]_k\supset \dots \supset [v_i]_m$, one can 
choose $\xi_i\in \widehat\CE$ such that 
$[\xi_i]_k=[v_i]_k$, $1\leq k\leq m$ 
(see Remarks \ref{remark finite sequence}.(b)).
Then 
$\af:=e^{a_n}_{\xi_n}\cdots e^{a_1}_{\xi_1}$ is a 
 $\frac{1}{2^{m+2}}$-pseudoloop based at $\xi(=\xi_1)$ 
by Lemma \ref{pseudoloop}. 
\end{proof}
 
\vskip 1pc

\begin{dfn} \label{def pseudo-periodic}
We say that $(E,\CL)$ is {\it pseudo-periodic} if  
for each  $m>1$ and $a_{m-1}\cdots a_1\in \CL(E^m)$,
there exist an $n\geq 1$ and finite paths 
$a_{i+m-1}\cdots a_{i}\in \CL(E^m)$ $(1\leq i\leq n+1)$  
such that 
$$a_{n+m-1}\cdots a_{n+1}=a_{m-1}\cdots a_1.$$
Note that we do not require   
$a_{n+m-1}\cdots a_{n+1} a_n\cdots a_1\in \CL^*(E)$.
\end{dfn} 

\vskip 1pc

\begin{prop} \label{prop path chain} 
Let $(\BE^0,\BE^1, d,r)$ be the topological graph of $(E,\CL)$ 
and $d:\BE^1\to \BE^0$ be injective.   
If $(E,\CL)$ is  pseudo-periodic, 
then for each $\xi\in  \BE^0$ and $\varepsilon>0$, 
there exists an $\varepsilon$-pseudoloop based at $\xi$. 
\end{prop}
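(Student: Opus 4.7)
The plan is to reduce the statement to Lemma \ref{lem main} via the pseudo-periodic hypothesis. Given $\xi \in \BE^0 = \widehat{\CE}$ and $\varepsilon > 0$, I would first choose $m \geq 2$ large enough that $\frac{1}{2^{m+2}} < \varepsilon$, so producing a $\frac{1}{2^{m+2}}$-pseudoloop at $\xi$ suffices. Since $d$ is injective, Corollary \ref{cor disjoint path}(c) supplies a unique labeled path $a_m a_{m-1} \cdots a_1 \in \CL(E^m)$ with $[\xi]_m = r(a_m \cdots a_1)$. This choice already verifies hypothesis (i) of Lemma \ref{lem main}.

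Next, I would apply Definition \ref{def pseudo-periodic} to the length-$m$ path just obtained. Pseudo-periodicity then produces an integer $n \geq 1$ and further letters $a_{m+1}, a_{m+2}, \dots, a_{n+m}$ in $\CA$ such that every window $a_{i+m-1} \cdots a_i$ lies in $\CL(E^m)$ for $1 \leq i \leq n+1$, and such that the terminal overlap equation $a_{n+m-1} \cdots a_{n+1} = a_{m-1} \cdots a_1$ holds. The latter identity is equivalent to $a_{n+j} = a_j$ for all $1 \leq j \leq m-1$, which is precisely the compatibility needed for the cyclic convention $a_l = a_{l'}$ whenever $l \equiv l' \pmod n$ used in Lemma \ref{lem main} to be well-defined.

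With the cyclic extension in hand, hypothesis (ii) of Lemma \ref{lem main} is immediate: for $1 \leq i \leq n$, the cyclic window $a_{i+m-1} \cdots a_i$ coincides, after applying the overlap identifications $a_{n+j} = a_j$, with one of the labeled paths guaranteed by pseudo-periodicity. Lemma \ref{lem main} then delivers a $\frac{1}{2^{m+2}}$-pseudoloop based at $\xi$, and by the choice of $m$ this is the required $\varepsilon$-pseudoloop.

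The only real obstacle is the indexing bookkeeping: one must confirm that the terminal-overlap condition in Definition \ref{def pseudo-periodic} is strong enough to make the cyclic convention of Lemma \ref{lem main} consistent for each window $i = 1, \dots, n$ (in particular the windows that wrap around, such as $i = n$ where the window reads, under identification, as $a_{m-1} \cdots a_1 a_n$). The note in Definition \ref{def pseudo-periodic} emphasizing that the full concatenation need \emph{not} lie in $\CL^*(E)$ is a reminder that only the overlapping length-$m$ windows matter, which is exactly what Lemma \ref{lem main}(ii) requires, so no further labeled-path condition needs to be verified.
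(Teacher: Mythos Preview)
Your approach is correct and follows the same strategy as the paper: extract the unique path $a_m\cdots a_1$ with $[\xi]_m=r(a_m\cdots a_1)$ via Corollary~\ref{cor disjoint path}, invoke pseudo-periodicity to obtain the cyclic overlap $a_{n+j}=a_j$, and then build the pseudoloop. The only cosmetic difference is that you invoke Lemma~\ref{lem main} as a black box, whereas the paper's proof reproduces that lemma's construction inline (choosing $\xi_i$ with $[\xi_i]_m=r(a_{i+m-1}\cdots a_i)$ and checking the distance estimates directly); your packaging is arguably cleaner since Lemma~\ref{lem main} has already done that work.
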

\begin{proof} 
Let $\xi\in \BE^0$ and $\varepsilon>0$. 
Fix $m> 1$ such that $1/2^{m}<\varepsilon$ and  
let $a_m\cdots a_1$ be the path with $[\xi]_m=r(a_m\cdots a_1)$. 
Since $(E,\CL,\CE)$ is pseudo-periodic, 
there exist paths $a_{i+m-1}\cdots a_{i}\in \CA$, 
$1\leq i\leq n+1$ 
such that 
$$a_{n+m-1}\cdots a_{n+1}=a_{m-1}\cdots a_1.$$
Set $\xi_1=\xi$ and for each $2\leq i\leq n+1$ 
choose $\xi_i\in \BE^0$ 
such that $$[\xi_i]_{m}=r(a_{i+m-1}\dots a_i).$$ 
(For the existence of such a ultrafilter $\xi_i$, 
see Remarks \ref{remark finite sequence}.(b).)
Then for the edges $e^{a_i}_{\xi_i}\in \BE^1$, $1\leq i\leq n-1$, 
one can check that 
$$[r(e^{a_i}_{\xi_i})]_{m-1}=r(a_{i+m-1}\cdots a_{i+1})
=[d(e^{a_{i+1}}_{\xi_{i+1}})]_{m-1},$$ 
and also for $i=n$, 
$$[r(e^{a_n}_{\xi_n})]_{m-1}= r(a_{n+m-1}\cdots a_{n+1})=
r(a_{m-1}\cdots a_1)=[d(e^{a_1}_{\xi_1})]_{m-1}.$$
This shows that  
$\rho(r(e^{a_i}_{\xi_i}), d(e^{a_{i+1}}_{\xi_{i+1}}))
\leq 1/2^m \,(<\varepsilon)$ for $1\leq i\leq n-1$, 
and also $\rho(r(e^{a_n}_{\xi_n}), d(e^{a_{1}}_{\xi_{1}}))
\leq 1/2^m \,(<\varepsilon)$. 
Therefore the sequence $(e^{a_n}_{\xi_n}, \dots, e^{a_1}_{\xi_1})$ 
in  $\BE^1$  is an $\varepsilon$-pseudoloop at $\xi$. 
\end{proof}

\vskip 1pc

\begin{prop} \label{prop path} 
Let $(\BE^0,\BE^1, d,r)$ be the topological graph of $(E,\CL)$ 
and let $d:\BE^1\to \BE^0$ be injective.    
Assume further that for any left-infinite 
sequence $(\dots, a_{3}, a_2,a_1)\in \overline{\CL(E_{-\infty})}$ 
and $m\geq 1$, there exists an $n\geq 1$ such that 
\begin{eqnarray}\label{n}
a_{n+m}\cdots a_{n+1}=a_m\cdots a_1.
\end{eqnarray}
Then $(E,\CL)$ is pseudo-periodic.
\end{prop}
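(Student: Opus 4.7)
The plan is to reduce the finite-path condition in Definition \ref{def pseudo-periodic} to the hypothesis on left-infinite sequences by extending any prescribed finite labeled path backwards, using the standing no-sources assumption on $E$.

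First I would fix an arbitrary $m>1$ together with a finite labeled path $a_{m-1}\cdots a_1 \in \CL^*(E)$, realized as $\CL(\ld_{m-1}\cdots \ld_1)$ for some $\ld_{m-1}\cdots \ld_1 \in E^{m-1}$. Because $E$ has no sources, the vertex $s(\ld_{m-1})$ admits an incoming edge $\ld_m$; iterating this step, I obtain a left-infinite path $\cdots \ld_{m+1}\ld_m\ld_{m-1}\cdots \ld_1 \in E_{-\infty}$. Setting $a_i := \CL(\ld_i)$ for $i\geq m$ then yields a left-infinite sequence ${\bf a} = (\ldots, a_3, a_2, a_1) \in \CL(E_{-\infty}) \subset \overline{\CL(E_{-\infty})}$ whose rightmost $m-1$ letters coincide with the prescribed block.

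Second, I would invoke the hypothesis of the proposition, applied to this ${\bf a}$ with parameter $m-1$ in place of $m$, to obtain some $n\geq 1$ satisfying
$$a_{n+(m-1)}\cdots a_{n+1} = a_{m-1}\cdots a_1.$$
The rest is automatic: for each $1 \leq i \leq n+1$, the length-$m$ word $a_{i+m-1}\cdots a_i$ is exactly $\CL(\ld_{i+m-1}\cdots \ld_i)$, a finite subpath of the left-infinite extension, and therefore lies in $\CL(E^m)$. Together with the displayed equality, this verifies both clauses of Definition \ref{def pseudo-periodic}, so $(E,\CL)$ is pseudo-periodic.

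The argument is essentially bookkeeping and I do not anticipate any significant obstacle; the only genuine input is the backwards extension, which is immediate from the no-sources hypothesis. The one place that demands care is the indexing: one must apply the hypothesis with parameter $m-1$ (not $m$), so that the tail-block equality it produces matches exactly the $a_{n+m-1}\cdots a_{n+1}=a_{m-1}\cdots a_1$ appearing in Definition \ref{def pseudo-periodic}. One should also observe that the construction delivers a genuine left-infinite path in $E_{-\infty}$ rather than merely a formal sequence of letters in $\CA$, which is what guarantees the membership of every length-$m$ subword $a_{i+m-1}\cdots a_i$ in $\CL(E^m)$.
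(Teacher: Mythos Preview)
Your proof is correct and takes a genuinely different, more elementary route than the paper. The paper fixes $a_m\cdots a_1 \in \CL(E^m)$, chooses an ultrafilter $\xi \in \widehat\CE$ with $[\xi]_m = r(a_m\cdots a_1)$, and then invokes the injectivity of $d$ (via Corollary~\ref{cor disjoint path} and Lemma~\ref{lem unique vertex}) to read off the unique left-infinite sequence in $\overline{\CL(E_{-\infty})}$ determined by the chain $[\xi]_l = r(a_l\cdots a_1)$. You sidestep the ultrafilter and topological-graph machinery entirely: by extending a concrete realizing path in $E$ backward using only the no-sources hypothesis, you land directly in $\CL(E_{-\infty})$ rather than in its closure, and the required membership of each $a_{i+m-1}\cdots a_i$ in $\CL(E^m)$ is then automatic. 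In particular your argument does not use the injectivity of $d$ at all, so you have in fact proved a slightly stronger statement than the proposition asserts. The paper's approach, by contrast, keeps the argument inside the ultrafilter picture that drives the rest of Section~4, which is thematically consistent with the surrounding lemmas but not strictly necessary for this implication.
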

\begin{proof} 
Let $m>1$ and $a_m\cdots a_1\in \CL(E^m)$. 
Pick $\xi\in \BE^0$ with $[\xi]_m=r(a_m\cdots a_1)$. 
Then by Lemma \ref{lem unique vertex} and 
Corollary \ref{cor disjoint path}, for each $l\geq 1$ 
there exists a unique path  $a_l a_{l-1}\cdots a_1$ such that 
 $[\xi]_l=r(a_l a_{l-1}\cdots a_1)$. 
Setting $l\to \infty$, we ontain a  
 left-infinite path $\cdots a_la_{l-1}\cdots a_1$ in  $\CA^{-\mathbb N}$
such that each of its finite words must appear as a labeled path in $\CL^*(E)$. 
Particularly, $a_{i+m-1}\cdots a_{i}\in \CL(E^m)$ for all $i\geq 1$. 
By the assumption, there is an $n\geq 1$ such that 
$a_{n+m}\cdots a_{n+1}=a_m\cdots a_1$, which proves that 
$(E,\CL)$ is pseudo-periodic.
\end{proof}
 
\vskip 1pc
\noindent 
The converse of Proposition \ref{prop path} is not true, in general, 
as we see in the following example. 

\vskip 1pc 

\begin{ex}\label{ex one} 
 Consider the following labeled graph $(E,\CL)$ with 
$\CA=\{0,1\}$.

\vskip 1.5pc 
\hskip -.5cm
\xy  /r0.3pc/:(-44.2,0)*+{\cdots};(44.3,0)*+{\cdots .};
(-40,0)*+{\bullet}="V-4";
(-30,0)*+{\bullet}="V-3";
(-20,0)*+{\bullet}="V-2";
(-10,0)*+{\bullet}="V-1"; (0,0)*+{\bullet}="V0";
(10,0)*+{\bullet}="V1"; (20,0)*+{\bullet}="V2";
(30,0)*+{\bullet}="V3";
(40,0)*+{\bullet}="V4";
 "V-4";"V-3"**\crv{(-40,0)&(-30,0)};
 ?>*\dir{>}\POS?(.5)*+!D{};
 "V-3";"V-2"**\crv{(-30,0)&(-20,0)};
 ?>*\dir{>}\POS?(.5)*+!D{};
 "V-2";"V-1"**\crv{(-20,0)&(-10,0)};
 ?>*\dir{>}\POS?(.5)*+!D{};
 "V-1";"V0"**\crv{(-10,0)&(0,0)};
 ?>*\dir{>}\POS?(.5)*+!D{};
 "V0";"V1"**\crv{(0,0)&(10,0)};
 ?>*\dir{>}\POS?(.5)*+!D{};
 "V1";"V2"**\crv{(10,0)&(20,0)};
 ?>*\dir{>}\POS?(.5)*+!D{};
 "V2";"V3"**\crv{(20,0)&(30,0)};
 ?>*\dir{>}\POS?(.5)*+!D{};
 "V3";"V4"**\crv{(30,0)&(40,0)};
 ?>*\dir{>}\POS?(.5)*+!D{};
 (-35,1.5)*+{{0}};(-25,1.5)*+{{0}};
 (-15,1.5)*+{{0}};(-5,1.5)*+{{0}};(5,1.5)*+{1};
 (15,1.5)*+{0};(25,1.5)*+{{0}};(35,1.5)*+{{0}};
 (0.1,-2.5)*+{v_0};(10.1,-2.5)*+{v_1};
 (-9.9,-2.5)*+{v_{-1}};
 (-19.9,-2.5)*+{v_{-2}};
 (-29.9,-2.5)*+{v_{-3}};
 (-39.9,-2.5)*+{v_{-4}}; 
 (20.1,-2.5)*+{v_{2}};
 (30.1,-2.5)*+{v_{3}};
 (40.1,-2.5)*+{v_{4}}; 
\endxy 
\vskip 1.5pc 

\noindent 
We first show that the labeled space is pseudo-periodic. 
Let $m>1$ and $a_m\cdots a_1\in \CL(E^m)$. 
We have to find an $n\geq 1$ and paths $a_{i+m-1}\cdots a_i\in \CL(E^m)$ 
for $1\leq i\leq n$ such that 
$a_{n+m-1}\cdots a_{n+1}=a_{m-1}\cdots a_1$. 
If $a_m\cdots a_1=0\cdots 0$, then any $n\geq 1$ will do the job 
with $a_{i+m-1}\cdots a_i=0\cdots 0$ for $1\leq i\leq n$. 
If 
$$a_m\cdots a_1=0\cdots 1,$$
then taking $n=m$ and  
$a_{m+m}\cdots a_{m+1}:=0\cdots 1$ we have
that $a_{i+m-1}\dots a_i \in \CL^*(E)$ for 
$1\leq i\leq n$  and 
$a_{n+m}\cdots a_{n+1}=0\cdots 1=a_m\cdots a_1$. 
Finally if $a_m\cdots a_1=0\cdots 1\cdots 0$ with $a_k=1$ 
for some $k$ and 
$a_j=0$ for $j\neq k$, then  
take $n=m$ and $a_{m+j}:=a_j$ for $1\leq j\leq m$. 
It is then easy to see that 
$a_{i+m-1}\cdots a_i\in \CL(E^m)$ for $1\leq i\leq n$,
and 
$ a_{n+m-1}\cdots a_{n+1}=a_{m-1}\cdots a_1$.
Thus $(E,\CL)$ is pseudo-periodic. 

Note that 
any left-infinite sequence $(\cdots, a_3,a_2,a_1)$ whose finite blocks 
belong to $\CL^*(E)$ can have at most one $a_i$ which is equal to $1$.  
Thus for the sequence 
$$(\cdots, a_3,a_2,a_1):=(\dots, 0,0,0,1),$$
while each of its finite blocks appears in $\CL^*(E)$, 
there is no $n\geq 1$ such that 
$a_{n+m-1}\cdots a_{n+1}=a_m\cdots a_1$ 
holds, namely this labeled space does not satisfy the assumption 
of Proposition \ref{prop path}.
\end{ex}

\vskip 1pc 

\begin{thm}\label{thm main} 
Let $E$ have no sinks or sources 
and $(E,\CL)$ be a labeled graph over finite alphabet $\CA$. 
Then the following are equivalent: 
\begin{enumerate} 
\item[{\rm (a)}] $C^*(E,\CL)$  is AF-embeddable;
\item[{\rm (b)}] $C^*(E,\CL)$  is quasidiagonal;
\item[{\rm (c)}] $C^*(E,\CL)$  is stably finite;
\item[{\rm (d)}] $C^*(E,\CL)$  is finite.
\item[{\rm (e)}] $(E,\CL)$ is pseudo-periodic, and for $\af,\bt\in \CL(E^k)$, $k\geq 1$,   
$$r(\af)\cap r(\bt)\neq \emptyset \iff \af=\bt.$$ 
\end{enumerate}
\end{thm}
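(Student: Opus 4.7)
The plan is to deduce the theorem in three stages, with the bulk of the new work concentrated in the equivalence (d)$\Leftrightarrow$(e).

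\emph{Collapsing (a)--(d).} The equivalences (a)$\Leftrightarrow$(b)$\Leftrightarrow$(c) are already Proposition \ref{prop-crossed product}, and (d)$\Rightarrow$(c) is automatic. For (c)$\Rightarrow$(d) I will use the isomorphism $C^*(E,\CL)\cong C^*(\BE)$ together with the fact that the topological graph $\BE$ is compact and has no sinks (proved in the proposition following the standing assumptions). Schafhauser's realization \cite{Sch:2015} then gives $C^*(\BE)\cong C(\BE^\infty)\times_\sm\Z$, so Pimsner's Theorem \ref{Pimsner thm} identifies finiteness, stable finiteness, quasidiagonality, and AF-embeddability in one stroke.

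\emph{From (e) to (d).} Applying the range-disjointness clause of (e) with $k=1$ yields $r(a)\cap r(b)=\emptyset$ for $a\neq b$, so Proposition \ref{disjoint range} makes $d:\BE^1\to\BE^0$ injective. Combined with the pseudo-periodicity hypothesis, Proposition \ref{prop path chain} produces an $\varepsilon$-pseudoloop at every $\xi\in\BE^0$ for every $\varepsilon>0$. Transporting this property to the compact metric shift space $\BE^\infty$ via the Schafhauser--Pimsner picture from the previous step, condition (e) of Theorem \ref{Pimsner thm} then yields finiteness of $C^*(\BE)\cong C^*(E,\CL)$.

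\emph{From (d) to (e).} Finiteness of $C^*(E,\CL)$ forces $d$ to be injective (last assertion of Proposition \ref{disjoint range}), and Corollary \ref{cor disjoint path} then yields the range-disjointness clause of (e): if $r(\af)\cap r(\bt)\neq\emptyset$ with $|\af|=|\bt|=k$, the corollary makes each of $\af,\bt$ a terminal segment of the other, hence $\af=\bt$. To extract pseudo-periodicity, I fix $m>1$ and $a_m\cdots a_1\in\CL(E^m)$ and use Remarks \ref{remark finite sequence}(b) to produce $\xi\in\widehat\CE$ with $[\xi]_m=r(a_m\cdots a_1)$. Finiteness, via the equivalence in the first step, supplies an $\tfrac{1}{2^{m+2}}$-pseudoloop $(e^{b_n}_{\eta_n},\dots,e^{b_1}_{\eta_1})$ at $\eta_1=\xi$, and Lemma \ref{pseudoloop} translates this into $[\eta_i]_m=r(b_{i+m-1}\cdots b_i)$ for $1\leq i\leq n$, with $b_j=b_{j'}$ whenever $j\equiv j'\pmod n$. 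Since $[\eta_1]_m=[\xi]_m=r(a_m\cdots a_1)$ and $d$ is injective, Corollary \ref{cor disjoint path} forces $b_j=a_j$ for $1\leq j\leq m$, and extending $(a_j)_{j\geq 1}$ by the $n$-periodicity of $(b_j)$ delivers words $a_{i+m-1}\cdots a_i\in\CL(E^m)$ for all $i$ together with the identity $a_{n+m-1}\cdots a_{n+1}=a_{m-1}\cdots a_1$.

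\emph{Main obstacle.} The delicate step is this last translation: pseudoloops live in the topological graph $\BE$ over the abstract Stone spectrum $\widehat\CE$, whereas pseudo-periodicity is a purely combinatorial statement about labeled words. The bridge is the injectivity of $d$, which via Corollary \ref{cor disjoint path} guarantees that each generalized vertex $[v]_l$ is uniquely coded by a labeled path of length $l$. Only with this unique-coding does Lemma \ref{pseudoloop} function as a two-way dictionary between $\varepsilon$-pseudoloops in $\BE$ and periodic symbol sequences in $\CA$, and only then can a prescribed initial block $a_m\cdots a_1$ be read off the pseudoloop produced from a suitably chosen $\xi$.
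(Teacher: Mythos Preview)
Your proof is correct and follows essentially the same approach as the paper: translate condition (e) into Schafhauser's pseudoloop characterization via Corollary \ref{cor disjoint path}, Lemma \ref{pseudoloop}, and Proposition \ref{prop path chain}, then invoke the topological-graph results for $\BE$. The paper's proof is terser, citing \cite[Theorem 6.7]{Sch:2015} directly for both the (a)--(d) collapse and the equivalence of finiteness with ``$d$ injective and $\varepsilon$-pseudoloops everywhere'', whereas you route these through the crossed-product realization and Pimsner's Theorem \ref{Pimsner thm}; your explicit extraction of pseudo-periodicity from a $\tfrac{1}{2^{m+2}}$-pseudoloop via Lemma \ref{pseudoloop} in the (d)$\Rightarrow$(e) step is precisely the detail the paper's one-line argument leaves implicit.
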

\begin{proof}  
The first four conditions are known equivalent in \cite[Theorem 6.7]{Sch:2015}. 
Also the path range condition in (e) is equivalent to the injectivity of 
$d: \BE^1\to \BE^0$ by  Corollary \ref{cor disjoint path}, 
and the pseudo-periodic condition is equivalent to the existence of 
an $\varepsilon$-pseudoloop at any $\xi\in \widehat\CE$ and any $\varepsilon>0$ 
by Lemma \ref{pseudoloop} and Proposition \ref{prop path chain}.
\end{proof}

\vskip 1pc

If $d:\BE^1\to \BE^0$ is injective 
and $\xi\in \BE^0$ is a ultrafilter in $\CE$, 
there exists a unique 
decreasing sequence $[\xi]_1\supset [\xi]_2\supset \cdots$ of 
generalized vertices $[\xi]_n$'s each of which is a range of 
a unique path of length $n$. 
We will denote this path by $a_\xi^n \cdots a_\xi^1$, namely 
$$[\xi]_n=r(a_\xi^n \cdots a_\xi^1)$$ for all $n\geq 1$. 
Thus the map $\xi\mapsto {\bf a}_\xi$, where 
$${\bf a}_\xi:=\cdots a_\xi^n \cdots a_\xi^1\in \CA^{-\mathbb N}$$ 
is a left infinite sequences such that 
$a_\xi^n \cdots a_\xi^1\in \CL^*(E)$ for each $n\geq 1$, 
is a one-to-one correspondence between 
the ultrafilters in $\widehat{\CE}$ and $\overline{\CL(E_{-\infty})}$. 

On the set $\CA^{\mathbb Z}$ of all bi-infinite words  
which are the functions ${\bf a}: \mathbb Z\to \CA$, 
$a_i:={\bf a}(i)$,
we will consider the shift transform 
$\tau:\CA^{\mathbb Z}\to \CA^{\mathbb Z}$  given by 
$$ \tau({\bf a})(i)={\bf a}(i+1), \ i\in \mathbb Z,$$ 
namely,  
$\tau(\cdots a_{-1}.a_{0}a_1\cdots)=\cdots a_{-1}a_{0}.a_1\cdots.$ 
Then $\tau$ is a homeomorphism on $\CA^{\mathbb Z}$. 
Moreover, the restriction of $\tau$  to the path space 
$\overline{\CL(E_{-\infty}^\infty)}$ 
is a homeomorphism onto itself.

Note also from \cite[Theorem 6.4]{Sch:2015} that 
we have a homeomorphism
$\sm: \BE^\infty\to\BE^\infty$ given by 
$$\sm(\cdots e^{a_n}_{\xi_n}\cdots e^{a_1}_{\xi_1})
=\cdots e^{a_n}_{\xi_n}\cdots e^{a_2}_{\xi_1}.$$
We will also use $\sm$ for the map on $\CA^{-\mathbb N}$ 
given by
$$\sm(\cdots a_3a_2a_1):=\cdots a_3a_2$$ to avoid too much notation.

\vskip 1pc

\begin{lem}\label{infinite path space} 
Let $(\BE^0,\BE^1, d,r)$ be the topological graph of $(E,\CL)$ 
and let $d:\BE^1\to \BE^0$ be injective.   
Then 
there is a homeomorphism 
$$x\mapsto {\bf a}_x: 
\BE^\infty\to \overline{\CL(E^{\,\infty}_{-\infty})}$$ 
between two compact spaces. Moreover 
${\bf a}_{\sm(x)}=\tau({\bf a}_x)$ for $x\in \BE^\infty$.
\end{lem}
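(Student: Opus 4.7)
The plan is to define $\Phi(x) = {\bf a}_x \in \CA^{\mathbb Z}$ directly from the data of $x = \cdots e^{a_n}_{\xi_n} \cdots e^{a_1}_{\xi_1} \in \BE^\infty$ by combining the labels $(a_i)_{i \geq 1}$ of its edges (giving the positive-index part) with the left-infinite word ${\bf a}_{\iota(\xi_1)}$ of the source ultrafilter (giving the non-positive part). Explicitly, I would set ${\bf a}_x(i) := a_i$ for $i \geq 1$ and ${\bf a}_x(i) := a_{\iota(\xi_1)}^{2-i}$ for $i \leq 0$, where $a_\xi^k$ is the $k$-th letter from the right of the left-infinite word attached to $\xi \in \widehat\CE$ via Proposition \ref{prop path}. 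The two rules are consistent at $i = 1$: since $\xi_1 \in \widehat{\CI}_{a_1}$, the ultrafilter $\iota(\xi_1)$ contains $r(a_1)$, and by Corollary \ref{cor disjoint path}, $[\iota(\xi_1)]_1 = r(a_1)$, so $a_{\iota(\xi_1)}^1 = a_1$.

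Next I would verify ${\bf a}_x \in \overline{\CL(E^{\,\infty}_{-\infty})}$ by checking every finite window is in $\CL^*(E)$. For windows contained in $i \geq 1$, composability of consecutive edges $e_j, e_{j+1}$ in $\BE$ gives $\widehat{\theta}_{a_{j+1}}(\xi_{j+1}) = \iota(\xi_j) \ni r(a_j)$, hence $r(r(a_j), a_{j+1}) \in \xi_{j+1}$ is nonempty, so $a_j a_{j+1} \in \CL(E^2)$; iterating establishes the claim for longer windows. Windows in $i \leq 0$ are sub-words of ${\bf a}_{\iota(\xi_1)} \in \overline{\CL(E_{-\infty})}$. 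For mixed windows, the composability $\iota(\xi_j) = \widehat{\theta}_{a_{j+1}}(\xi_{j+1})$ of the edges of $x$ yields the shift relation $a_{\iota(\xi_{j+1})}^{k+1} = a_{\iota(\xi_j)}^k$ (derived from $[\widehat{\theta}_a(\xi)]_k = r(a_{\iota(\xi)}^{k+1} \cdots a_{\iota(\xi)}^2)$ together with Corollary \ref{cor disjoint path}), which combined with $a_{\iota(\xi_j)}^1 = a_j$ shows that the edge labels of $x$ are precisely the iterated extensions of ${\bf a}_{\iota(\xi_1)}$ under $\widehat\theta$, so the whole bi-infinite word comes from a coherent sequence of left-infinite words, each of whose finite sub-words lies in $\CL^*(E)$. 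For the inverse map, given ${\bf a} \in \overline{\CL(E^{\,\infty}_{-\infty})}$ and each $i \geq 1$, the left-infinite truncation $\cdots {\bf a}(i{-}1){\bf a}(i)$ lies in $\overline{\CL(E_{-\infty})}$ and so corresponds via Proposition \ref{prop path} to a unique $\eta_i \in \widehat\CE$; I take $\xi_i \in \widehat{\CI}_{a_i}$ to be its restriction. The required composability $\iota(\xi_i) = \widehat{\theta}_{a_{i+1}}(\xi_{i+1})$ is then an immediate comparison of left-infinite words differing only by dropping the final letter.

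For continuity, specifying ${\bf a}_x$ on a window $[-m, n]$ corresponds to fixing the first $n$ edges of $x$ together with the level-$(m+2)$ generalized vertex $[\iota(\xi_1)]_{m+2}$, which is a basic open set in $\BE^\infty$; continuity of $\Phi^{-1}$ is analogous. Both spaces being compact Hausdorff ($\overline{\CL(E^{\,\infty}_{-\infty})}$ is compact since $\CA$ is finite, and $\BE^\infty$ is a closed subset of the product of the compact sets $\BE^1_a$), $\Phi$ is a homeomorphism. The shift relation ${\bf a}_{\sm(x)} = \tau({\bf a}_x)$ follows at once: $\sm$ replaces $\iota(\xi_1)$ by $\iota(\xi_2)$ and shifts edge labels by one, which via the shift relation on ultrafilter words gives precisely $\tau({\bf a}_x)(i) = {\bf a}_x(i+1)$. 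The main obstacle in this plan is the mixed-window step in establishing well-definedness: showing that finite windows of ${\bf a}_x$ straddling both the edge-label portion ($i \geq 1$) and the ultrafilter-word portion ($i \leq 0$) form valid labeled paths in $E$, which requires careful bookkeeping with the iterated shift structure of left-infinite words under $\widehat{\theta}$.
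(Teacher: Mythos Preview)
Your proposal is correct and follows essentially the same route as the paper: both build ${\bf a}_x$ by gluing the edge labels $a_1 a_2 \cdots$ onto the left-infinite word ${\bf a}_{\xi_1}$ determined by the source ultrafilter, construct the inverse from left-infinite truncations of a given bi-infinite word, and conclude the homeomorphism from bijectivity plus compactness. The paper organizes the argument a bit more cleanly by first isolating the equivalence $e^b_\eta e^a_\xi \in \BE^2 \Leftrightarrow \sm({\bf a}_\eta) = {\bf a}_\xi$ as a standalone claim (which disposes of your ``mixed-window'' worry in one stroke, since every finite window of ${\bf a}_x$ is then a subword of some ${\bf a}_{\xi_n} \in \overline{\CL(E_{-\infty})}$); also, your two references to Proposition~\ref{prop path} should point instead to the discussion of the bijection $\xi \mapsto {\bf a}_\xi$ in the paragraph immediately preceding the lemma.
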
 
\begin{proof} 
We first claim that 
for $e^a_\xi, e^b_\eta\in\BE^1$,   
\begin{eqnarray}\label{edges} 
e^b_\eta e^a_\xi\in \BE^2 \ \Leftrightarrow\ 
d(e^a_\xi)=r(e^b_\eta)\ \Leftrightarrow\ 
\sm({\bf a}_\eta)={\bf a}_\xi.
\end{eqnarray} 
(Note here that $a_{\eta}^2 a_{\eta}^1 =ab$.) 
The first equivalence is just the definition of paths of length 2 in 
$\BE$ and thus we only need to show the second.
Since $r(e^b_\eta)=v_{\widehat\theta_b (\eta)}$, by 
Lemma \ref{lem unique vertex} and Lemma \ref{ultrafilter of vertices} 
$$[\iota(\xi)]_n=[\widehat\theta_b (\eta)]_n$$
for all $n\geq 1$. 
Then $r(a_\xi^n \cdots a_\xi^1)=[\xi]_n=[\widehat\theta_b (\eta)]_n
\in \{A\in \CE: r(A, b)\in \eta\}$, $n\geq 1$, implies that 
$r(a_\xi^n \cdots a_\xi^1b)=r(r(a_\xi^n \cdots a_\xi^1),b) \in \eta$. 
But this is a set of the form $[v]_{n+1}$ in $\eta$, hence 
must coincide with $r(a_\eta^{n+1}\cdots a_\eta^2 a_\eta^1)$. 
By Proposition \ref{disjoint range} we obtain 
$$a_\xi^n \cdots a_\xi^2 ab=
a_\xi^n \cdots a_\xi^1b=a_\eta^{n+1}\cdots a_\eta^2 a_\eta^1$$ 
for all $n\geq 1$, 
which shows that $d(e^a_\xi)=r(e^b_\eta)$ if and only if 
 $\sm({\bf a}_\eta)={\bf a}_\xi$ (and $ab=a_\eta^2 a_\eta^1\in \CL^*(E)$).
 
Now if $x =\cdots e^{a_n}_{\xi_n}\cdots e^{a_1}_{\xi_1}\in \BE^\infty$, 
by the above claim we see that  
\begin{eqnarray}\label{bi-infinite seq} 
\sm^n({\bf a}_{\xi_{n+1}})= {\bf a}_{\xi_1}
\end{eqnarray}
and 
$a_1\cdots a_n\in \CL^*(E)$ for all $n\geq 1$. 
Let 
$${\bf a}_{\xi_1}
=\cdots a^n_{\xi_1}\cdots a^2_{\xi_1}a^1_{\xi_1}, \ \ 
 a^1_{\xi_1}=a_1.$$  
Then by (\ref{bi-infinite seq}) with $n=2$, 
$${\bf a}_{\xi_1}=
\sm({\bf a}_{\xi_2})=\sm(\cdots a^n_{\xi_2}\cdots a^2_{\xi_2}a^1_{\xi_2} )
= \cdots a^n_{\xi_2}\cdots a^3_{\xi_2}a^2_{\xi_2}.$$ 
Hence $a^{i+1}_{\xi_2}=a^i_{\xi_1}$ for all $i\geq 1$.
Moreover $a^1_{\xi_1}=a_1$ and $a^1_{\xi_2}=a_2$ give 
\begin{eqnarray} \label{label seq}
{\bf a}_{\xi_2}=\cdots a^3_{\xi_1} a^2_{\xi_1}a_1a_2.
\end{eqnarray}
Thus an induction used to obtain (\ref{label seq}) shows that 
for all $n\geq 1$, 
\begin{eqnarray} \label{xi_n}
{\bf a}_{\xi_n}=\cdots a^3_{\xi_1} a^2_{\xi_1}a_1a_2\cdots a_n.
\end{eqnarray}
So far we have shown that each 
$x=\cdots e^{a_n}_{\xi_n}\cdots e^{a_1}_{\xi_1}$ in 
$\BE^\infty$ 
defines a  bi-infinite sequence
 $${\bf a}_{x}:=\cdots a^3_{\xi_1} a^2_{\xi_1}.a_1a_2\cdots a_n\cdots$$ 
in $\overline{\CL(E^{\,\infty}_{-\infty})}$. 

For the converse, let ${\bf a}:= 
{\bf a}_{(-\infty,0]} a_1a_2\cdots\in\overline{\CL(E^{\,\infty}_{-\infty})}$. 
Then the left-infinite sequences ${\bf a}_{(-\infty,0]} a_1\cdots a_n$ 
defines
$\xi_n\in \BE^0$ so that 
$${\bf a}_{\xi_n}={\bf a}_{(-\infty,0]} a_1\cdots a_n$$ 
for all $n\geq 1$. 
Then obviously 
$\sm({\bf a}_{\xi_{n+1}})={\bf a}_{\xi_n}$, which implies 
together with (\ref{edges}) that 
$e^{a_{n+1}}_{\xi_{n+1}}e^{a_n}_{\xi_n}\in \BE^2$  for $n\geq 1$. 
Thus we obtain an infinite sequence 
$$x:=\cdots e^{a_{3}}_{\xi_{3}}e^{a_2}_{\xi_2}e^{a_{1}}_{\xi_{1}}
\ \text{ in } \BE^\infty.$$

Now it is rather clear from (\ref{xi_n}) that 
the correspondence between 
$x=\cdots e^{a_n}_{\xi_n}\cdots e^{a_1}_{\xi_1}\in \BE^\infty$ and 
${\bf a}_{x}:=\cdots a^3_{\xi_1} a^2_{\xi_1}a_1a_2\cdots a_n\cdots$ in 
$\overline{\CL(E^{\,\infty}_{-\infty})}$ is bijective and continuous. 
Hence it must be a homeomorphism since the spaces 
$\BE^\infty$ and $\overline{\CL(E^{\,\infty}_{-\infty})}$ 
are both compact. 

To show  ${\bf a}_{\sm(x)}=\tau({\bf a}_x)$ for $x\in \BE^\infty$, 
let $x=\cdots e^{a_n}_{\xi_n}\cdots e^{a_2}_{\xi_2}e^{a_1}_{\xi_1}\in \BE^\infty$. 
Then 
$$\sm(x)= \cdots e^{a_3}_{\xi_3} e^{a_2}_{\xi_2}\ \text{ and } \  
{\bf a}_x=\cdots a^3_{\xi_1} a^2_{\xi_1}.a_1a_2\cdots a_n\cdots.$$
Since ${\bf a}_{\xi_2}=\cdots a^3_{\xi_1} a^2_{\xi_1}a_1a_2$, we have 
${\bf a}_{\sm(x)}=\cdots a^3_{\xi_1} a^2_{\xi_1} a_1.a_2\cdots a_n\cdots$.
Therefore 
$${\bf a}_{\sm(x)}
=\tau(\cdots a^3_{\xi_1} a^2_{\xi_1}.a_1a_2\cdots a_n\cdots )
=\tau({\bf a}_{x})$$
 follows.
\end{proof}

\vskip 1pc
The following lemma is immediate from \cite[Theorem 6.4]{Sch:2015}. 
\vskip 1pc 
  
\begin{lem}\label{lem cross product} 
 Let $(\BE^0,\BE^1, d,r)$ be the topological graph of $(E,\CL)$ 
and let $d:\BE^1\to \BE^0$ be injective.    
Then $C^*(E,\CL)$ is isomorphic to the crossed product 
$C\big(\,\overline{\CL(E^{\,\infty}_{-\infty})}\,\big)\times_\tau \mathbb Z$. 
\end{lem}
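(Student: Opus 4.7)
The plan is to combine three isomorphisms already available in the paper. The first is $C^*(E,\CL) \cong \CO(\BE)$, the Carlsen--Ortega--Pardo theorem quoted at the end of Section 2, which applies under our standing assumptions. The second is Schafhauser's theorem \cite[Theorem 6.4]{Sch:2015}, whose hypotheses are met because $\BE$ is a compact topological graph with no sinks (by the opening proposition of this section) and $d$ is assumed injective; this yields a canonical $*$-isomorphism $\CO(\BE) \cong C(\BE^\infty) \times_\sm \mathbb Z$, where $\sm$ acts on $\BE^\infty$ as the shift. The third ingredient is the homeomorphism $\Phi:\BE^\infty \to \overline{\CL(E^{\,\infty}_{-\infty})}$, $x\mapsto {\bf a}_x$, provided by Lemma \ref{infinite path space}.

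The crucial observation is that $\Phi$ is equivariant: the same lemma establishes ${\bf a}_{\sm(x)} = \tau({\bf a}_x)$, i.e.\ $\Phi \circ \sm = \tau \circ \Phi$. Composition with $\Phi$ therefore induces a $*$-isomorphism $\Phi^*:C\big(\,\overline{\CL(E^{\,\infty}_{-\infty})}\,\big)\to C(\BE^\infty)$ that intertwines the automorphisms induced by $\tau$ and $\sm$. By the standard functoriality of crossed products by $\mathbb Z$ under equivariant isomorphisms of commutative $C^*$-algebras, this lifts to a $*$-isomorphism
$$C(\BE^\infty)\times_\sm \mathbb Z \;\cong\; C\big(\,\overline{\CL(E^{\,\infty}_{-\infty})}\,\big)\times_\tau \mathbb Z.$$
Concatenating the three isomorphisms produces the desired conclusion.

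There is essentially no hard step: the delicate combinatorial content, namely the identification of $\BE^\infty$ with $\overline{\CL(E^{\,\infty}_{-\infty})}$ as a topological dynamical system, has been handled in Lemma \ref{infinite path space}; Schafhauser's theorem supplies the crossed-product description at the topological-graph level; and the remainder is the routine fact that topological conjugacy of $\mathbb Z$-actions on compact Hausdorff spaces yields isomorphic transformation-group $C^*$-algebras. The only bookkeeping to watch concerns the direction of the shifts $\sm$ on $\BE^\infty$ versus $\tau$ on $\CA^{\mathbb Z}$, but this is precisely what the equivariance statement in Lemma \ref{infinite path space} reconciles.
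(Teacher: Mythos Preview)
Your proof is correct and is exactly the argument the paper has in mind: the paper merely says the lemma is ``immediate from \cite[Theorem 6.4]{Sch:2015}'', and you have spelled out how that theorem combines with the Carlsen--Ortega--Pardo isomorphism $C^*(E,\CL)\cong\CO(\BE)$ and the equivariant homeomorphism of Lemma \ref{infinite path space} to yield the conclusion. There is nothing to add.
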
 

\vskip 1pc

\begin{thm} \label{thm simple} 
Let $(\BE^0,\BE^1, d,r)$ be the topological graph of $(E,\CL)$  
and let  $d:\BE^1\to \BE^0$ be injective. 
Assume that $\overline{\CL(E^{\,\infty}_{-\infty})}$ is an infinite set. 
Then we have the following:
\begin{enumerate}
\item[$(a)$] 
$C^*(E,\CL)$ is simple if and only if  
for each ${\bf a}\in \overline{\CL(E^{\,\infty}_{-\infty})}$ 
and $\af\in \CL^*(E)$, there is an $n\in \mathbb Z$ such that 
${\bf a}_{[n,n+|\af|]}=\af$.
 
\item[$(b)$] 
If $C^*(E,\CL)$ is simple, it is always quasidiagonal.
\end{enumerate}
\end{thm}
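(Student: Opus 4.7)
The plan is to reduce both statements to Pimsner's Theorem \ref{Pimsner thm} via the crossed-product description of Lemma \ref{lem cross product}, which identifies
$$C^*(E,\CL) \cong C\big(\,\overline{\CL(E^{\,\infty}_{-\infty})}\,\big)\times_\tau \mathbb Z,$$
where $\tau$ is the shift homeomorphism on the compact metric space $X:=\overline{\CL(E^{\,\infty}_{-\infty})}$.

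For part (a), I would invoke the classical characterization that for an infinite compact metric space $X$ and a homeomorphism $\tau$, the crossed product $C(X)\times_\tau\mathbb Z$ is simple if and only if $\tau$ is minimal: when $X$ is infinite, any periodic orbit would be a finite closed $\tau$-invariant set contradicting minimality, so topological freeness is automatic. It then remains to translate minimality into the combinatorial condition in the statement. The cylinder sets $[\gm.\dt]$ with $\gm,\dt\in \CL^*(E)\cup\{\ep\}$ form a basis of open sets for $X$, and the $\tau$-orbit of ${\bf a}$ meets $[\gm.\dt]$ exactly when the concatenation $\gm\dt$ occurs as a subword of ${\bf a}$ at some position. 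Hence the $\tau$-orbit of every ${\bf a}\in X$ is dense if and only if every finite word in $\CL^*(E)$ appears as a subword of every ${\bf a}$, which is precisely the condition in (a).

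For part (b), assuming $C^*(E,\CL)$ is simple, part (a) gives minimality of $\tau$ on the infinite compact space $X$. I then verify the pseudoperiodicity condition of Theorem \ref{Pimsner thm} for every ${\bf a}\in X$. By a standard $\om$-limit argument, minimality on a compact space forces the forward orbit $\{\tau^k({\bf a}):k\geq 0\}$ to be dense, since its $\om$-limit set is closed, nonempty, and $\tau$-invariant. Given $\varepsilon>0$, choose $N\geq 1$ with $\rh(\tau^N({\bf a}),{\bf a})<\varepsilon$ and set $x_1={\bf a}$ and $x_{i+1}=\tau^i({\bf a})$ for $1\leq i\leq N-1$; then $\rh(\tau(x_i),x_{i+1})=0$ for $1\leq i\leq N-1$ and $\rh(\tau(x_N),x_1)<\varepsilon$, exhibiting the required $\varepsilon$-pseudoperiodic orbit through ${\bf a}$. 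Pimsner's theorem then yields that $C^*(E,\CL)$ is AF-embeddable and, in particular, quasidiagonal.

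The principal delicate point will be the simplicity-to-minimality direction: one must check carefully that the ideal theory of $C(X)\times_\tau\mathbb Z$ is captured by $\tau$-invariant closed subsets of $X$ (this is precisely where the infiniteness hypothesis enters, ruling out periodic orbits and giving topological freeness), and that the cylinder-set translation is not disrupted by $\CL^*(E)$ being a proper subset of $\CA^*$. Beyond this verification, the argument reduces smoothly to standard facts about minimal $\mathbb Z$-systems on compact metric spaces together with Theorem \ref{Pimsner thm} and Lemma \ref{lem cross product}.
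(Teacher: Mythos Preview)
Your proof of part (a) matches the paper's: both reduce to Lemma \ref{lem cross product}, identify simplicity of the crossed product with minimality of $\tau$ (using infiniteness of $X$ to rule out periodic orbits), and translate minimality into the subword condition via the cylinder-set basis.

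For part (b) you take a genuinely different route. The paper observes that, once $\tau$ is minimal on the infinite totally disconnected space $\overline{\CL(E^{\,\infty}_{-\infty})}$, the system is a minimal Cantor system, and then invokes the structural theorem (Putnam, as in \cite[Theorem VIII.7.5]{Da:1996}) that the crossed product of a minimal Cantor system is a limit circle (AT) algebra, hence has stable rank one and is stably finite; quasidiagonality then follows from the equivalences already established. Your argument instead stays entirely within Pimsner's Theorem \ref{Pimsner thm}: you verify pseudoperiodicity directly from minimality via an $\omega$-limit argument and conclude AF-embeddability. Your approach is more self-contained, using only results already stated in the paper and avoiding the Putnam structure theorem; the paper's route, on the other hand, yields the stronger structural conclusion that $C^*(E,\CL)$ is an AT algebra of stable rank one. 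One small remark: the infiniteness hypothesis is really needed for the implication ``minimal $\Rightarrow$ simple'' (to get topological freeness), whereas ``simple $\Rightarrow$ minimal'' holds unconditionally since a proper closed invariant subset always generates a proper ideal.
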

\begin{proof} 
By Lemma \ref{lem cross product}, it is enought to show that 
every ${\bf a}$ has a dense orbit if and only if it contains 
every possible finite path as its subpath. 
But this easily follows from the fact that 
the cylinder sets $Z(\af.\bt)$ form a basis for the 
topology of $\overline{\CL(E^{\,\infty}_{-\infty})}$. 

Since the dynamical system
$(\,\overline{\CL(E^{\,\infty}_{-\infty})},\, \tau)$ 
is a minimal Cantor system, applying the well known result 
that the crossed product 
$C\big(\,\overline{\CL(E^{\,\infty}_{-\infty})}\,\big)\times_\tau \mathbb Z$ 
is a limit circle algebra 
(for example, see \cite[Theorem VIII.7.5]{Da:1996}), 
we see that the crossed product is a $C^*$-algebra of 
stable rank one (hence stably finite).  
Then Lemma \ref{lem cross product} proves the final assertion of the 
theorem.
\end{proof} 

\vskip 1pc 

\begin{ex} 
Let $(E:=E_{\mathbb Z},\CL)$ be the labeled space of the labeled graph 
considered in Example \ref{ex one}. Then 
$\overline{\CL(E^{\,\infty}_{-\infty})}$ is an infinite set because 
$\tau^n({\bf a})$, $n\in \mathbb Z$, are all distinct 
sequences in $\overline{\CL(E^{\,\infty}_{-\infty})}$, where 
${\bf a}=\cdots 00.1000\cdots$.

On the other hand, the zero sequence ${\bf 0}\in \{0,1\}^{\mathbb Z}$, 
${\bf 0}(n)=0$ for all $n\in \mathbb Z$, which belongs to 
$\overline{\CL(E^{\,\infty}_{-\infty})}$ does not contain any finite 
path $\af\in \CL^*(E)$ whenever $\af$ contains $1$. 
Thus the $C^*$-algebra $C^*(E,\CL)$ is not simple by 
Theorem \ref{thm simple}. 
Actually we always obtain non-simple algebras 
when we label the graph $E_{\mathbb Z}$  
with finite  $1$'s and infinite $0$'s.
\end{ex}

\vskip 1pc 

\begin{cor} \label{cor main}
If $C^*(E,\CL)$ is simple, the following are equivalent: 
\begin{enumerate}
\item[$(a)$] $C^*(E,\CL)$ is AF-embeddable.
\item[$(b)$] $C^*(E,\CL)$ is quasidiagonal.
\item[$(c)$] $C^*(E,\CL)$ is stably finite.
\item[$(d)$] $C^*(E,\CL)$ is finite.
\item[$(e)$] $r(a)\cap r(b)=\emptyset$ for $a,b\in \CA$ with $a\neq b$.
\item[$(f)$] The fixed point algebra $C^*(E,\CL)^\gm$ coincides 
with the diagonal subalgebra 
$\overline{\rm span}\{s_\af p_A s_\af^*\mid \af\in \CL^*(E),\ A\in \CE\}$.
\end{enumerate}
\end{cor}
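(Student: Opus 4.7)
My plan is to chain the stated equivalences by combining Theorem \ref{thm main} with the crossed-product picture of Lemma \ref{lem cross product} and a short computation in the fixed-point algebra. The equivalences $(a)\Leftrightarrow(b)\Leftrightarrow(c)\Leftrightarrow(d)$ are immediate from Theorem \ref{thm main}, which makes no simplicity assumption, and the implication $(d)\Rightarrow(e)$ is exactly the necessity half of Proposition \ref{disjoint range}.

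For the main new implication $(e)\Rightarrow(a)$, condition $(e)$ forces $d:\BE^1\to\BE^0$ to be injective by Proposition \ref{disjoint range}, so Lemma \ref{lem cross product} identifies $C^*(E,\CL)$ with $C(X)\times_\tau\Z$ where $X=\overline{\CL(E^{\,\infty}_{-\infty})}$. Using simplicity, I would argue that $(X,\tau)$ is a minimal $\Z$-system and that $X$ must be infinite, since any finite invariant orbit would produce a matrix-valued quotient isomorphic to $M_n(C(\T))$, contradicting simplicity. Because $X$ sits inside the totally disconnected compact space $\CA^{\Z}$ and has no isolated points (by minimality on an infinite space), $X$ is a Cantor set. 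I would then invoke the standard fact, already used in the proof of Theorem \ref{thm simple}, that the $\Z$-crossed product of a minimal Cantor system is a simple AT-algebra of real rank zero, which is AF-embeddable, yielding $(a)$.

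For $(e)\Leftrightarrow(f)$ I would work directly with the description $C^*(E,\CL)^\gm=\overline{\mathrm{span}}\{s_\af p_A s_\bt^* : |\af|=|\bt|,\ A\subset r(\af)\cap r(\bt)\}$. Given $(e)$, Corollary \ref{cor disjoint path} forces $r(\af)\cap r(\bt)=\emptyset$ whenever $\af\neq\bt$ have the same length, so only the diagonal generators $s_\af p_A s_\af^*$ survive and $(f)$ follows. Conversely, if $(e)$ fails, pick distinct $a,b\in\CA$ with $A:=r(a)\cap r(b)\neq\emptyset$; using $s_a^*s_a=p_{r(a)}$ and $s_b^*s_a=0$ one computes $(s_a p_A s_b^*)^*(s_a p_A s_b^*)=s_b p_A s_b^*$ while $(s_a p_A s_b^*)(s_a p_A s_b^*)^*=s_a p_A s_a^*$, so $s_a p_A s_b^*$ is a nonzero non-normal gauge-invariant element and cannot belong to the commutative diagonal subalgebra, contradicting $(f)$.

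The main obstacle is ruling out a finite $X$ in the step $(e)\Rightarrow(a)$ so that the Cantor-system AT-algebra dichotomy applies; everything else reduces to invoking previously established lemmas and manipulating the generators of $C^*(E,\CL)$.
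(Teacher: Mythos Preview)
Your proposal is correct and aligns with the paper's intended argument: the paper leaves Corollary~\ref{cor main} without proof, but its ingredients are exactly the ones you invoke---Theorem~\ref{thm main} for $(a)\!\Leftrightarrow\!(d)$, Proposition~\ref{disjoint range} for $(d)\!\Rightarrow\!(e)$, and Theorem~\ref{thm simple}(b) (i.e.\ the minimal Cantor/limit-circle argument via Lemma~\ref{lem cross product}) for $(e)\!\Rightarrow\!(a)$. Your computation for $(e)\!\Leftrightarrow\!(f)$ is the natural one and fills in the only step not already recorded in the paper; it is correct as written, using that the diagonal subalgebra is commutative so cannot contain the non-normal element $s_a p_A s_b^*$.

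One remark on what you call ``the main obstacle'': you do not actually need to rule out finite $X$ in $(e)\!\Rightarrow\!(a)$. If $X=\overline{\CL(E^{\,\infty}_{-\infty})}$ were finite, then $C(X)\times_\tau\Z$ would be a direct sum of algebras of the form $M_n(C(\T))$, which are already circle algebras and hence AF-embeddable---so $(a)$ follows in that case too, without appealing to simplicity. Your argument that simplicity forces $X$ infinite (since $M_n(C(\T))$ is never simple) is correct, and is indeed needed if you want to invoke Theorem~\ref{thm simple}(b) verbatim, but the implication $(e)\!\Rightarrow\!(a)$ itself does not depend on it.
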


\vskip 1pc

\begin{ex}
The non-AF but finite simple labeled graph 
$C^*$-algebras $C^*(E_{\mathbb Z},\CL_\om)$ considered in 
\cite[Theorem 3.7]{JKKP:2017} are such examples that satisfy 
condition $(e)$ of Corollary \ref{cor main}.
\end{ex}
 
\vskip 1pc

\end{document}